\documentclass[english,12pt,oneside]{amsproc}
\usepackage[english]{babel}
\usepackage{a4wide}
\usepackage{amsthm}
\usepackage{graphics}
\usepackage{amsfonts, amssymb, amscd, amsmath}
\usepackage{latexsym}
\usepackage[matrix,arrow,curve]{xy}
\usepackage{mathabx}
\usepackage{color}
\usepackage{pbox}
\usepackage{tikz}
\usetikzlibrary{matrix,decorations.pathreplacing,positioning}
\usepackage{hyperref}

\DeclareMathOperator{\Cone}{Cone} 
\DeclareMathOperator{\Ker}{Ker}

\DeclareMathOperator{\Hom}{Hom} \DeclareMathOperator{\rk}{rk}
\DeclareMathOperator{\odd}{odd} \DeclareMathOperator{\SU}{SU}
\DeclareMathOperator{\Imm}{Im} \DeclareMathOperator{\com}{compl}
\DeclareMathOperator{\ab}{ab} \DeclareMathOperator{\ABb}{AB} \DeclareMathOperator{\link}{link}
\DeclareMathOperator{\relint}{relint} \DeclareMathOperator{\Tot}{Tot}
\DeclareMathOperator{\Hilb}{Hilb}
\DeclareMathOperator{\ord}{ord}

\newcommand{\inc}[2]{{[#1\!:\!#2]}}

\newcommand{\ko}{\Bbbk}
\newcommand{\Zo}{\mathbb{Z}}
\newcommand{\Ro}{\mathbb{R}}
\newcommand{\Rg}{\mathbb{R}_{\geqslant 0}}
\newcommand{\Co}{\mathbb{C}}
\newcommand{\Qo}{\mathbb{Q}}

\newcommand{\af}{a.f.}

\newcommand{\ttt}{\mathfrak{t}}

\newcommand{\AB}{\mathcal{AB}}

\newcommand{\ca}[1]{\mathcal{#1}}
\newcommand{\Hr}{\widetilde{H}}
\newcommand{\dd}{\partial}
\newcommand{\Ca}{\mathcal{C}}
\newcommand{\F}{\mathcal{F}}
\newcommand{\Hh}{\mathcal{H}}

\newcommand{\CP}{\mathbb{C}P}
\newcommand{\HP}{\mathbb{H}P}

\newcounter{stmcounter}[section]
\newcounter{thcounter}
\newcounter{problcounter}

\numberwithin{equation}{section}




\theoremstyle{plain}
\newtheorem{cor}[stmcounter]{Corollary}

\newtheorem{thm}[thcounter]{Theorem}

\newtheorem{prop}[stmcounter]{Proposition}
\newtheorem{lem}[stmcounter]{Lemma}
\newtheorem{probl}[problcounter]{Problem}

\theoremstyle{definition}
\newtheorem{defin}[stmcounter]{Definition}

\theoremstyle{remark}
\newtheorem{ex}[stmcounter]{Example}
\newtheorem{rem}[stmcounter]{Remark}
\newtheorem{con}[stmcounter]{Construction}

\begin{document}

\title[Equivariantly formal torus actions of complexity one]{Orbit spaces of equivariantly formal torus actions of complexity one}

\author{Anton Ayzenberg}
\address{Faculty of computer science, National Research University Higher School of Economics, Russian Federation, and Neapolis University Pafos, Paphos, Cyprus}
\email{ayzenberga@gmail.com}

\author{Mikiya Masuda}
\address{Osaka City University Advanced Mathematical Institute, Japan, and Faculty of computer science, National Research University Higher School of Economics, Russian Federation}
\email{masuda@sci.osaka-cu.ac.jp }

\date{\today}
\thanks{The article was prepared within the framework of the HSE University Basic Research Program}

\subjclass[2020]{Primary 57S12, 57S25, 57N65, 55N91, 55N25 Secondary 55R20, 18G35, 18G10, 55N30, 06A11, 06A07, 54B40}


\keywords{torus action, complexity one, equivariant formality, orbit space, equivariant cohomology, sponge}

\begin{abstract}
Let a compact torus $T=T^{n-1}$ act on an orientable smooth compact manifold $X=X^{2n}$ effectively, with nonempty finite set of fixed points, and suppose that stabilizers of all points are connected. If $H^{\odd}(X)=0$ and the weights of tangent representation at each fixed point are in general position, we prove that the orbit space $Q=X/T$ is a homology $(n+1)$-sphere. If, in addition, $\pi_1(X)=0$, then $Q$ is homeomorphic to $S^{n+1}$. We introduce the notion of $j$-generality of tangent weights of torus action. For any action of $T^k$ on $X^{2n}$ with isolated fixed points and $H^{\odd}(X)=0$, we prove that $j$-generality of weights implies $(j+1)$-acyclicity of the orbit space $Q$. This statement generalizes several known results for actions of complexity zero and one. In complexity one, we give a criterion of equivariant formality in terms of the orbit space. In this case, we give a formula expressing Betti numbers of a manifold in terms of certain combinatorial structure that sits in the orbit space.
\end{abstract}

\maketitle

\section{Introduction}\label{secIntro}

Let a compact torus $T=T^k$ act smoothly and effectively on an orientable connected closed smooth manifold $X=X^{2n}$ with nonempty finite set of fixed points. The number $n-k$ can be shown to be nonnegative. This number is called the complexity of the action.

For a fixed point $x\in X^T$ of the action, consider $\alpha_{x,1},\ldots,\alpha_{x,n}\in \Hom(T^k,T^1)\cong \Zo^k$, the weights of the tangent representation at $x$ defined up to sign.

\begin{defin}
The action is said to be in \emph{$j$-general position} if $j\leqslant n$ and, for any fixed point $x$, any $j$ of the weights $\alpha_{x,1},\ldots,\alpha_{x,n}$ are linearly independent over $\Qo$.
\end{defin}

We will usually assume that $j\leqslant k$ where $k$ is the dimension of the acting torus, since otherwise the condition is empty. An action of $T=T^k$ is called \emph{an action in general position}, if it is in $k$-general position.



For an action of $T^k$ on $X$ consider the equivariant filtration:
\[
X_0\subset X_1 \subset X_2\subset\cdots\subset X_k=X,
\]
where $X_i$ consists of torus orbits of dimension at most $i$. There is an orbit type filtration on the orbit space $Q=X/T$:
\[
Q_0\subset Q_1 \subset Q_2\subset\cdots\subset Q_k=Q,\qquad Q_i=X_i/T.
\]
In the following it is assumed that filtrations start with $X_{-1}=Q_{-1}=\varnothing$. Using the filtration of $Q$, one can define an $i$-dimensional face $F$ of $Q$ as a closure of any connected component of $Q_i\setminus Q_{i-1}$. If $p\colon X\to Q$ denotes the natural projection to the orbit space, then the full preimage $X_F=p^{-1}(F)$ of a face $F$ is a smooth submanifold of $X$, preserved by the $T$-action. We call $X_F$ a face submanifold of $X$. Let $T_F\subset T^k$ denote the noneffective kernel of the $T^k$-action on $X_F$. Therefore there is an effective torus action of $T/T_F$ on a face submanifold $X_F$.

We now briefly recall the notion of an equivariantly formal action. For a smooth action of the torus $T$ on an orientable smooth manifold $X$ consider the fibration $X\hookrightarrow X\times_TET\to BT$ and the corresponding Serre spectral sequence
\begin{equation}\label{eqSerreSeq}
E_2^{*,*}\cong H^*(BT)\otimes H^*(X)\Rightarrow H^*(X\times_TET)=H^*_T(X),
\end{equation}
where $T\hookrightarrow ET\to BT$ is the universal principal $T$-bundle, $X\times_TET$ is the Borel construction of $X$, and $H^*_T(X)$ is the equivariant cohomology algebra. The coefficients of cohomology modules are taken in the ring $R$, which is either $\Zo$ or $\Qo$. It will be assumed throughout the paper that either $R=\Zo$ and all stabilizers of the action are connected, or, otherwise, $R=\Qo$. The space $X$ with a torus action is called \emph{(cohomologically) equivariantly formal} in the sense of Goresky--Kottwitz--MacPherson \cite{GKM} if its Serre spectral sequence \eqref{eqSerreSeq} degenerates at $E_2$. In particular, the spaces with vanishing odd degree cohomology are all equivariantly formal. In \cite[Lm.2.1]{MasPan} it was proved that under the assumption that fixed points are isolated, the condition $H^{\odd}(X)=0$ is equivalent to $H^*_T(X)$ being a free module over $H^*(BT)$. For orientable manifolds, the latter condition is equivalent to equivariant formality according to~\cite[Thm.1.1]{FP} and~\cite[Cor.1.4]{Franz}.

According to~\cite[Lem.2.2]{MasPan}, the following condition holds for equivariantly formal spaces with isolated fixed points
\begin{equation}\label{eqCondJoint}
\mbox{each face of }Q\mbox{ has a vertex.}
\end{equation}
In the first part of the paper we study equivariantly formal actions, hence Condition \eqref{eqCondJoint} is satisfied. In the second part, condition~\eqref{eqCondJoint} is taken as a standing assumption.

The actions of complexity zero and their orbit spaces are well studied in toric topology. The following lemma describes the orbit spaces of equivariantly formal torus actions of complexity zero.

\begin{lem}[\cite{MasPan}]\label{lemMasPan}
Consider an equivariantly formal smooth effective action of $T=T^n$ on an orientable closed connected manifold $M=M^{2n}$ with isolated fixed points. Then the orbit space $P=M/T$ is acyclic. Its face structure given by the torus action is the structure of a homology cell complex on $P$.
\end{lem}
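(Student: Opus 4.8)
The plan is to argue by induction on $n$, proving simultaneously that $P$ is acyclic and that its face structure is a homological cell complex; the base cases $n=0,1$ are immediate, since there $P$ is a point, respectively an arc. For the inductive step, the first observation is that every proper face $F\subsetneq P$ is again the orbit space of an equivariantly formal torus manifold of smaller dimension. Indeed, if $\dim F=i<n$ then the face submanifold $M_F=p^{-1}(F)$ is a connected component of the fixed set $M^{T_F}$ of the subtorus $T_F$, it has dimension $2i$, and the effective residual action of $T/T_F\cong T^i$ has complexity zero because the generic orbit over $\relint F$ is $i$-dimensional. Fixed-point components of equivariantly formal actions are again equivariantly formal, and their fixed points lie in $M^T$, hence are isolated and (by Condition~\eqref{eqCondJoint}) nonempty; by \cite[Lm.2.1]{MasPan} we get $H^{\odd}(M_F)=0$, so the induction hypothesis applies to $F=M_F/(T/T_F)$. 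Thus all proper faces are acyclic and carry the homological cell structure, and it remains to treat $P$ itself and to verify the incidence maps.

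Next I would pin down the local model of $P$. The slice theorem presents a neighbourhood of any orbit as a Euclidean factor times the cone on the orbit space of the slice representation, which is itself a small torus manifold; consequently $P$ is a homology manifold with boundary, its interior being the image of the free orbits. The boundary $\partial P$ is exactly the union of the facets, i.e.\ the top skeleton $P_{n-1}$ of the orbit filtration. Using the inductive homological cell structure on the facets together with the homology-manifold property, one checks that $\partial P$ is a homology $(n-1)$-sphere. Feeding this into Lefschetz duality for the pairs $(F,\partial F)$ shows, face by face, that each $i$-face is a homology $i$-cell, so that the incidence maps of the orbit filtration assemble into a chain complex whose homology is $H_*(P)$. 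At this stage the homological-cell-complex claim is reduced to the single assertion that $P$ is acyclic, equivalently that $(P,\partial P)$ is a homology $n$-cell.

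The acyclicity of $P$ is where the global hypothesis of equivariant formality must be used, and it is the main obstacle. I would exploit the Borel fibration: the map $\rho\colon M\times_T ET\to P$ has fiber $BT_x$ over an orbit of type $T/T_x$, while equivariant formality means $H^*_T(M)\cong H^*(BT)\otimes H^*(M)$ is a free $H^*(BT)$-module of rank $\chi(M)=\#M^T$. Comparing the Leray spectral sequence of $\rho$ with the face filtration expresses $H^*_T(M)$ through the cohomologies $H^*(BT_F)$ of the faces, and the freeness then forces the face chain complex to be exact in positive degrees, which is precisely the acyclicity of $P$. An alternative, more Morse-theoretic route is to choose a generic circle $S\subset T$ with $M^S=M^T$ together with a compatible invariant Morse function; since $H^{\odd}(M)=0$ and all indices are even, it is perfect, and its descent to $P$ yields a cell decomposition matched to the face structure, from which acyclicity again follows.

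The delicate point throughout is the passage from the cohomology of $M$ (or of its Borel construction) to the ordinary homology of the quotient $P$: the orbit map collapses orbits of varying dimension and is far from a fibration, so equivariant formality does not translate to information about $P$ in any formal way. Making the spectral-sequence comparison (or the Morse descent) precise --- in particular controlling the contribution of the singular orbits along $\partial P$ --- is the crux of the argument. Once acyclicity of $P$ is established, the homological cell complex structure follows from the duality computations of the second step.
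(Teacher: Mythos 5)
Your inductive skeleton and the first step (face submanifolds $M_F$ inherit $H^{\odd}=0$, so the induction hypothesis applies to all proper faces) match the paper exactly. The genuine gap is in the crux you yourself identify: the acyclicity of $P$. Your route (a) asserts that comparing the Leray spectral sequence of $M\times_T ET\to P$ with the face filtration, ``the freeness then forces the face chain complex to be exact in positive degrees.'' That assertion \emph{is} the exactness of the Atiyah--Bredon--Franz--Puppe sequence \eqref{eqABseq} --- a nontrivial theorem of Atiyah, Bredon and Franz--Puppe \cite{Atiyah,Bredon,FP}, which over $\Zo$ moreover requires connected stabilizers --- and it does not follow formally from freeness of $H^*_T(M)$ by any soft spectral-sequence comparison; you must cite or reprove it. Even granting it, your sketch omits the two identifications that actually yield the conclusion in the paper's proof: first, $H^{*+i}_T(M_i,M_{i-1})\cong\bigoplus_{\dim F=i}H^{i}(F,\dd F)\otimes H^*(BT_F)$ (freeness of the residual torus action off the lower skeleton plus the inductive homology-cell property), whose vanishing in negative auxiliary degrees forces $(P,P_{n-1})$ to be a homology cell via \eqref{eqABseqForM}; and second, the observation that the degree-zero row of the ABFP sequence is precisely the augmented cellular cochain complex of $P$, whose exactness is the acyclicity of $P$. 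Your Morse-theoretic alternative (b) fails outright: an equivariantly formal torus manifold need not admit an invariant Morse function with critical set $M^T$ and all indices even (that is a Hamiltonian-type hypothesis, not a consequence of $H^{\odd}(M)=0$), and the claim that such a function ``descends to a cell decomposition of $P$ matched to the face structure'' is unsubstantiated --- the orbit map is not a fibration, which is exactly the difficulty you flag in your last paragraph without resolving it.

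A secondary, repairable error: you claim that $\dd P$ is a homology $(n-1)$-sphere \emph{before} establishing acyclicity of $P$, ``using the inductive homological cell structure on the facets together with the homology-manifold property.'' This cannot work: a closed homology $(n-1)$-manifold assembled from homology cells need not be a sphere (any closed orientable surface decomposes into disks, arcs and points), so sphere-ness of $\dd P$ is global information equivalent in strength to the acyclicity you are trying to prove. In the correct logical order it is a \emph{consequence} of acyclicity of $P$ via Lefschetz duality, not an input; since you only use it to re-derive facts already supplied by the induction hypothesis, you can simply delete this step. Note also that the paper's proof never needs the homology-manifold structure of $P$ at all: it uses only that proper faces are homology cells with $F_{-1}=\dd F$ and the exact sequence \eqref{eqABseqForM}, which both streamlines the argument and avoids the local-standardness analysis your slice-theorem discussion would require.
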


Consider an action of complexity one, that is an action of $T^{n-1}$ on $X^{2n}$, and assume that it is in general position. Then, according to \cite[Thm.2.10]{AyzCompl}, the orbit space $Q=X/T$ is a closed topological manifold of dimension $n+1$ provided that \eqref{eqCondJoint} holds true. Moreover, in this case $\dim Q_i=i$ and $\dim X_i=2i$ for $i\leqslant n-2$, while $\dim Q=n+1$. Therefore, $Q$ has faces of dimensions $0$ to $n-2$ and the unique maximal cell of dimension $n+1$ (which is the orbit manifold $Q$ itself). All faces except $Q$ will be called \emph{proper faces}. The set $Q_{n-2}$, which is the union of all proper faces, has specific topology which was axiomatized in the notion of \emph{a sponge} in \cite{AyzCompl}. Our main result concerning torus actions of complexity one is the following.

\begin{thm}\label{thmEqFormImpliesSphere}
Assume that a smooth action of $T^{n-1}$ on an orientable connected closed manifold $X=X^{2n}$ is effective, has nonempty finite set of fixed points, and is in general position. If the action is equivariantly formal, then the following hold for the orbit space $Q=X^{2n}/T^{n-1}$:
\begin{enumerate}
  \item all proper faces $F$ of $Q$ are acyclic, i.e. $\Hr^*(F)=0$;
  \item the sponge $Q_{n-2}$ is $(n-3)$-acyclic, i.e. $\Hr^i(Q_{n-2})=0$ for $i\leqslant n-3$;
  \item the orbit space $Q$ is a homology $(n+1)$-sphere, that is $\Hr^i(Q)=0$ for $i\leqslant n$ and $H^{n+1}(Q)\cong R$.
\end{enumerate}
Here, the coefficient ring $R$ is either $\Zo$ and all stabilizers of the action are assumed connected, or $R=\Qo$.
\end{thm}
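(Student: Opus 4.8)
The plan is to prove the three assertions in order, reducing (1) to the complexity-zero case, bootstrapping (2) from (1) by a spectral sequence over the face poset, and finally deducing (3) from (2) together with the manifold structure of $Q$. For (1), I would first observe that each proper face submanifold $X_F$ is a connected component of the fixed set $X^{T_F}$ of the subtorus $T_F\subseteq T^{n-1}$, so it carries an effective action of $T/T_F$ whose fixed points $X^T\cap X_F$ are isolated. Since $H^{\odd}(X)=0$, a standard localization argument gives $H^{\odd}(X^{T_F})=0$, hence $H^{\odd}(X_F)=0$ and the residual action on $X_F$ is again equivariantly formal. The general position hypothesis forces this action to have complexity zero: a proper face $F$ of dimension $i\leqslant n-2$ satisfies $\dim X_F=2i$ and $\dim(T/T_F)=i$, as recorded in the discussion preceding the theorem. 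Lemma~\ref{lemMasPan} then applies to $X_F$ and shows that $F=X_F/(T/T_F)$ is acyclic and that its face structure is a homological cell complex; in particular each proper face is a homological cell, with $H_*(F,\partial F)\cong H_*(D^{\dim F}, S^{\dim F-1})$.

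For (2), I would filter the sponge by the orbit type filtration $Q_0\subseteq Q_1\subseteq\cdots\subseteq Q_{n-2}$ and run the associated cohomology spectral sequence. Because every face is acyclic and behaves as a homological cell relative to its boundary by (1), the $E_1$-page collapses to the cellular cochain complex $C^\bullet$ of the homological cell structure on $Q_{n-2}$, with $C^i$ free of rank equal to the number of $i$-dimensional faces. The desired $(n-3)$-acyclicity, $\Hr^i(Q_{n-2})=0$ for $i\leqslant n-3$, then becomes the combinatorial statement that $C^\bullet$ is exact in low degrees. I would establish this from the local structure of the sponge axiomatized in \cite{AyzCompl}: the link of each $i$-face inside $Q_{n-2}$ is a homology $(n-3-i)$-sphere, so the sponge is a homology manifold away from its top cells, and the long exact sequences of the pairs $(Q_i,Q_{i-1})$ then propagate acyclicity upward throughout the range $i\leqslant n-3$.

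For (3), I would use that $Q$ is a closed topological $(n+1)$-manifold by \cite[Thm.2.10]{AyzCompl}, which applies since \eqref{eqCondJoint} holds, and compute its cohomology by comparing the free part with the sponge. Over $Q^{\mathrm{free}}=Q\setminus Q_{n-2}$ the projection $X^{\mathrm{free}}\to Q^{\mathrm{free}}$ is a principal $T^{n-1}$-bundle, so its Gysin sequence relates $H^*(Q^{\mathrm{free}})$ to $H^*(X^{\mathrm{free}})$, and the latter is controlled by $H^*(X)$—concentrated in even degrees by equivariant formality—through the long exact sequence of the pair $(X,X^{\mathrm{free}})$, whose complement $X_{n-2}$ has codimension $4$. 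Feeding in the $(n-3)$-acyclicity of the sponge from (2) and a Poincar\'e--Lefschetz duality of the form $H^k(Q,Q_{n-2})\cong H_{n+1-k}(Q^{\mathrm{free}})$, the reduced cohomology of $Q$ is forced to vanish in degrees $\leqslant n$, while $H^{n+1}(Q)\cong R$; orientability, needed when $R=\Zo$, follows from the vanishing of $H^1(Q;\Zt)$ obtained along the way.

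The main obstacle I anticipate is step (2): converting the geometric acyclicity of individual faces into the global low-degree acyclicity of the sponge requires precise control of the attaching data encoded in the sponge axioms, in particular that the links of faces are homology spheres of the expected dimension, and this is where an induction on $n$ may be unavoidable. Once this combinatorial acyclicity is in hand, the assembly in (3) is largely a bookkeeping exercise with the Gysin, duality, and pair sequences, whose only delicate point is checking that equivariant formality makes all the relevant connecting homomorphisms vanish in the required range.
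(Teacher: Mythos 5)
Your step (1) is essentially the paper's argument (localization as in \cite[Lm.~2.2]{MasPan} plus Lemma~\ref{lemMasPan} applied to the complexity-zero face submanifolds), but your step (2) contains a genuine gap that breaks the whole chain. You propose to deduce the $(n-3)$-acyclicity of $Q_{n-2}$ from the local sponge axioms, claiming that ``the link of each $i$-face inside $Q_{n-2}$ is a homology $(n-3-i)$-sphere, so the sponge is a homology manifold away from its top cells.'' This is false: by the local model of Construction~\ref{conModelSponge} and Lemma~\ref{lemLocHomOfC}, the local cohomology of the sponge at a point of type $i$ is $\Zo^{n-1-i}$ in degree $n-2$; equivalently, the transverse link of an $i$-face is the $(n-3-i)$-skeleton of a simplex on $n-i$ vertices, a wedge of $n-1-i$ spheres $S^{n-3-i}$, which is a homology sphere only for $i=n-2$. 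More fundamentally, no argument from face acyclicity plus the local axioms can yield the global low-degree acyclicity of the sponge: both properties are preserved under disjoint union of sponges, yet a disconnected $Q_{n-2}$ already violates item (2) in degree $0$ (and for $n=3$ connectedness is the entire content of item (2)). This is precisely why, in the converse Theorem~\ref{thmSphereImpliesEqForm}, sponge acyclicity appears as an independent hypothesis alongside face acyclicity: it carries global homological information about $X$, not combinatorics of the local models.

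The missing ingredient is the Atiyah--Bredon--Franz--Puppe sequence \eqref{eqABseq}, which is the engine of the paper's proof (via Proposition~\ref{propAcyclicity}) and which your outline never invokes --- you use equivariant formality only through $H^{\odd}(X)=0$. After the identifications $H^{*+i}_T(X_i,X_{i-1})\cong\bigoplus_{\dim F=i}H^i(F,\dd F)\otimes H^*(BT_F)$ for $i\leqslant n-2$ and $H^{*+n-1}_T(X,X_{n-2})\cong H^{*+n-1}(Q,Q_{n-2})$, exactness of \eqref{eqABseq} in degree $0$ exhibits the augmented cellular cochain complex of $Q_{n-2}$, extended by $H^{n-1}(Q,Q_{n-2})$, as an exact sequence: this gives item (2) at once, together with an isomorphism $\Hr^{n-2}(Q_{n-2})\toiso H^{n-1}(Q,Q_{n-2})$; specializing to degrees $<0$ and to degree $1$ gives $H^i(Q,Q_{n-2})=0$ for $i\leqslant n$, $i\neq n-1$. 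Item (3) then falls out of the long exact sequence of the pair $(Q,Q_{n-2})$ together with the manifold structure of $Q$ from \cite[Thm.~2.10]{AyzCompl} --- no Gysin or duality bookkeeping is needed. Note also that your step (3) as sketched is itself problematic: for $n-1>1$ a principal $T^{n-1}$-bundle has no Gysin sequence, so controlling $H^*(Q\setminus Q_{n-2})$ would force you back to the Borel construction, i.e.\ essentially the same equivariant machinery, and the claimed vanishing of the connecting homomorphisms is exactly what has to be proved rather than a formality.
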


\begin{cor}\label{corTopSphere}
Let $X$ be as in Theorem \ref{thmEqFormImpliesSphere} (for $\Zo$ coefficients), and, moreover, $X$ is simply connected. Then the orbit space $Q=X/T$ is homeomorphic to $S^{n+1}$.
\end{cor}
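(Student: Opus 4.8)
By Theorem~\ref{thmEqFormImpliesSphere}(3), taken with $R=\Zo$ (legitimate here, since all stabilizers are connected), the orbit space $Q$ is a homology $(n+1)$-sphere. The plan is to upgrade this to a homeomorphism with $S^{n+1}$ by assembling three ingredients: that $Q$ is a closed topological $(n+1)$-manifold, that $Q$ is simply connected, and the topological generalized Poincar\'e conjecture. The manifold structure is already at hand: in the complexity-one case $Q$ is a closed topological $(n+1)$-manifold by \cite[Thm.2.10]{AyzCompl}, whose hypotheses hold in our situation --- general position of the tangent weights, together with condition \eqref{eqCondJoint}, the latter holding because the action is equivariantly formal.

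The genuinely new input, and the step I expect to be the main obstacle, is simple connectivity of $Q$. The projection $p\colon X\to Q$ is not a fibration: because of the fixed points and the other non-principal orbits, the orbits range over several dimensions, so $\pi_1(Q)$ cannot be read off from a homotopy exact sequence. Instead I would invoke the general theory of compact connected transformation groups, where the orbit map of a compact connected Lie group $G$ acting on a path-connected space induces an epimorphism $\pi_1(X)\to\pi_1(X/G)$ (see, e.g., Bredon's book on compact transformation groups). Applying this to the torus $T$, which is compact and connected, and using $\pi_1(X)=0$, gives $\pi_1(Q)=0$.

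The conclusion is then formal. Now $Q$ is a simply connected closed topological $(n+1)$-manifold having the integral homology of $S^{n+1}$. By the Hurewicz theorem $\pi_{n+1}(Q)\cong H_{n+1}(Q;\Zo)\cong\Zo$, and a generator is realized by a map $f\colon S^{n+1}\to Q$ inducing isomorphisms on all homology groups; since both spaces have the homotopy type of simply connected CW complexes, Whitehead's theorem makes $f$ a homotopy equivalence, so $Q$ is a homotopy $(n+1)$-sphere. Finally I would apply the topological Poincar\'e conjecture in the relevant dimension --- Smale and Newman for $n+1\geqslant 5$, Freedman for $n+1=4$, Perelman for $n+1=3$, and the classical cases for $n+1\leqslant 2$ --- to conclude that a topological manifold homotopy equivalent to $S^{n+1}$ is homeomorphic to it. Hence $Q\cong S^{n+1}$.
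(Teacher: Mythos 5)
Your proposal is correct and follows essentially the same route as the paper's proof: the paper likewise deduces $\pi_1(Q)=0$ from $\pi_1(X)=0$ via Bredon \cite[Corollary 6.3]{Bred} (using the nonempty fixed point set) and then concludes $Q\cong S^{n+1}$ by the generalized Poincar\'e conjecture in the topological category \cite{SmalePC,NotesPerel}. Your extra details --- the manifold structure from \cite[Thm.2.10]{AyzCompl} and the Hurewicz--Whitehead upgrade from homology sphere to homotopy sphere --- are steps the paper leaves implicit, and they are correct.
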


\begin{proof}
Since the set of fixed points is nonempty, the condition $\pi_1(X)=0$ implies $\pi_1(Q)=0$ by \cite[Corollary 6.3]{Bred}. Generalized Poincar\'{e} Conjecture in topological category \cite{SmalePC}, \cite[Sec.21.6.2]{PoinFour}, \cite[Sec.3.2]{NotesPerel} then implies the homeomorphism $Q\cong S^{n+1}$.
\end{proof}

Using Corollary \ref{corTopSphere} we recover the following particular results:
\begin{enumerate}
  \item The result of Buchstaber--Terzic \cite{BT,BT2} who initiated the study of actions of positive complexity in terms of their orbit spaces. Their result states the homeomorphisms $G_{4,2}/T^3\cong S^5$ and $F_3/T^2\cong S^4$ for the complex Grassmann manifold $G_{4,2}$ of $2$-planes in $\Co^4$ and the manifold $F_3$ of full complex flags in $\Co^3$.
  \item The result of Karshon--Tolman \cite{KTmain} which states that the orbit space of a Hamiltonian complexity one torus action in general position is homeomorphic to a sphere. Indeed, if $X$ has Hamiltonian action with isolated fixed points, then $\pi_1(X)=0$, see e.g. \cite{HuiLi}.
  \item The results of the first author \cite{AyzCompl,AyzHP} asserting that $X^{2n}/T^{n-1}$ is a sphere, provided that $X^{2n}$ is an equivariantly formal manifold with an action of $T^n$, the orbit space $X^{2n}/T^n$ is a disk, and $T^{n-1}\subset T^n$ is a subtorus in general position. This class of examples includes the classical action of a maximal torus $T^2\subset G_2$ on the 6-sphere $G_2/\SU(3)$ of unit imaginary octonions.
  \item The result of \cite{AyzHP} which asserts that $\HP^2/T^3\cong S^5$ and its generalization to other quaternionic toric manifolds of dimension $8$.
\end{enumerate}

We mention here that the same technique, as used in the proof of Theorem~\ref{thmEqFormImpliesSphere}, gives a new proof of Lemma~\ref{lemMasPan}. We prove Lemma~\ref{lemMasPan} in Section~\ref{secProofMain} before giving the proof of Theorem~\ref{thmEqFormImpliesSphere} to demonstrate the key ideas. Next, using similar arguments we prove

\begin{thm}\label{thmOrbitAcyclicityAnyComplexity}
Assume that a smooth action of $T=T^k$ on an orientable connected closed manifold $X=X^{2n}$ with isolated fixed points is equivariantly formal and in $j$-general position, $j\geqslant 1$. Then the orbit space $Q=X/T$ satisfies
\[
\Hr^i(Q)=0 \mbox{ for } i\leqslant j+1.
\]
It is assumed that either the action has connected stabilizers and the coefficients are taken in $\Zo$, or the coefficients are in $\Qo$.
\end{thm}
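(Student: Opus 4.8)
The plan is to reduce the problem to the complexity-zero situation of Lemma~\ref{lemMasPan} on the faces of $Q$ of dimension at most $j$, then to run the cohomology spectral sequence of the orbit type filtration and use equivariant formality to force acyclicity.

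First I would establish that every face $F$ of $Q$ with $\dim F=i\leqslant j$ is an $i$-dimensional homological cell. Let $X_F$ be its face submanifold, carrying the effective action of $T/T_F$. By \eqref{eqCondJoint} the face $F$ has a vertex, so $X_F$ contains a fixed point $x$; near $x$ the submanifold $X_F$ is cut out by exactly $i$ of the tangent weights $\alpha_{x,1},\ldots,\alpha_{x,n}$, and these are linearly independent over $\Qo$ by $j$-generality since $i\leqslant j$. Hence $\dim(T/T_F)=i$, while $X_F$, being fibered by principal $(T/T_F)$-orbits of dimension $i$ over $F$, has dimension $2i$; thus the action of $T/T_F$ on $X_F$ has complexity zero. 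It is again equivariantly formal, a face submanifold inheriting equivariant formality. Lemma~\ref{lemMasPan} then applies, showing that $F=X_F/(T/T_F)$ is acyclic and a homological cell complex of dimension $i$ with a single top stratum. Consequently $H^m(F,\partial F)$ vanishes for $m\neq i$ and is isomorphic to $R$ for $m=i$, where $\partial F=F\cap Q_{i-1}$.

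Next I would feed this into the spectral sequence of the filtration $Q_0\subset\cdots\subset Q_k=Q$,
\[
E_1^{p,q}=H^{p+q}(Q_p,Q_{p-1})\cong\bigoplus_{\dim F=p}H^{p+q}(F,\partial F)\ \Rightarrow\ H^{p+q}(Q).
\]
By the previous step, for $p\leqslant j$ the term $E_1^{p,q}$ vanishes unless $q=0$, and $E_1^{p,0}$ is the free module on the set of $p$-faces. Chasing differentials in the range $p+q\leqslant j+1$ then shows that no higher differential can touch the bottom row there: outgoing differentials land in rows $q<0$, while the only potential incoming source into $E_r^{i,0}$ is $E_1^{i-r,r-1}$ with $i-r\leqslant j$ and $r-1\geqslant 1$, which already vanishes. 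Hence for every $i\leqslant j+1$ one gets $\Hr^i(Q)\cong H^i(C^\bullet,d_1)$, where $(C^\bullet,d_1)$ is the cellular cochain complex with $C^p=\bigoplus_{\dim F=p}R$ for $p\leqslant j$. The theorem is thus equivalent to the exactness of $(C^\bullet,d_1)$ in degrees $1,\ldots,j+1$.

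The hard part will be exactly this exactness: the acyclicity of the global complex $(C^\bullet,d_1)$ does not follow from the acyclicity of the individual faces, and must come from equivariant formality. The route I would take is to compare the above with the equivariant version of the same filtration on the Borel construction, $H^{p+q}_T(X_p,X_{p-1})\Rightarrow H^*_T(X)$. For equivariantly formal actions the associated Atiyah--Bredon complex is exact, and its terms $H^*_T(X_p,X_{p-1})$ are free $H^*(BT)$-modules whose reduction modulo $H^{>0}(BT)$ recovers $C^p$ in the range $p\leqslant j$ governed by $j$-generality. Transporting the exactness upstairs down through this augmentation is what should yield the vanishing of $H^i(C^\bullet,d_1)$ for $i\leqslant j+1$, and it is precisely the step that reuses the method announced for Lemma~\ref{lemMasPan} and Theorem~\ref{thmEqFormImpliesSphere}.
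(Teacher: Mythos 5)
Your overall strategy is the paper's strategy (low-dimensional faces are homological cells via Lemma~\ref{lemMasPan}, then the spectral sequence of the orbit type filtration with the bottom row identified with the degree-zero part of the exact ABFP sequence), but there are two genuine gaps, both stemming from conflating the rank filtration with the dimension filtration. The filtration $\{Q_p\}$ is by orbit dimension, so $E_1^{p,q}\cong\bigoplus_{\rk F=p}H^{p+q}(F,F_{-1})$, not $\bigoplus_{\dim F=p}$: by Lemma~\ref{lemJgenForFace} a face of rank $p<j$ has complexity zero and dimension $p$ (this part of your step 1 is fine and matches the paper), but faces of rank $\geqslant j$ may have positive complexity and dimension strictly larger than their rank, and they are not homological cells. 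This breaks your argument at two places inside the range $p+q\leqslant j+1$. First, the rows $q<0$ do not vanish for free: you silently treat terms such as $E_1^{j+2,-1}=\bigoplus_{\rk F=j+2}H^{j+1}(F,F_{-1})$ as zero, but for non-cell faces the vanishing $H^i(F,F_{-1})=0$ for $i<\rk F$ is a nontrivial statement; the paper proves it as Lemma~\ref{lemRelativeVanish} by an induction on rank running the ABFP sequence of each face submanifold $X_F$. Second, the term $E_1^{j,1}=\bigoplus_{\rk F=j}H^{j+1}(F,F_{-1})$ receives contributions from rank-$j$ faces of positive complexity (dimension $\geqslant j+2$), which your step 1 does not see at all; the paper kills this term in Lemma~\ref{lemJgeneralTech}(2) by applying Proposition~\ref{propAcyclicity}(4) to each such face submanifold, using Lemma~\ref{lemJgenForFace}(3) to check that the induced action is in $j$-general position with all proper subfaces of complexity zero. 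Without these two inputs your conclusion $\Hr^i(Q)\cong H^i(C^\bullet,d_1)$ for $i\leqslant j+1$ is unjustified.

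Your final step also needs repair. The claim that the modules $H^*_T(X_p,X_{p-1})$ are free over $H^*(BT)$ is false: they are sums of modules of the form $H^*(F,F_{-1})\otimes H^*(BT_F)$, and $H^*(BT_F)$ is a proper quotient of the polynomial ring $H^*(BT)$, not a free module; consequently ``reduction modulo $H^{>0}(BT)$'' of the exact AB complex need not stay exact (this is a Tor obstruction, and it is exactly the kind of statement one cannot wave through). The paper's maneuver is different and elementary: all maps in the ABFP sequence preserve the internal grading, so one takes the degree-zero graded component, an operation that preserves exactness on the nose, and identifies that component with the bottom row $(E_1^{\bullet,0},d_1)$ --- an identification which again uses Lemma~\ref{lemRelativeVanish}, since otherwise classes in $H^a(F,F_{-1})\otimes H^b(BT_F)$ with $a<\rk F$, $b>0$ could contribute in total degree $\rk F$. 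Note finally that exactness of the bottom row is needed up to position $p=j+1$, where $E_1^{p,0}$ is no longer the free module on $p$-faces, so the ``cellular cochain complex'' description cannot be the basis of the argument there; only the global degree-zero ABFP identification covers it.
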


Both Lemma~\ref{lemMasPan} and Theorem~\ref{thmEqFormImpliesSphere} arise as particular cases of this theorem. Another particular case is the following

\begin{cor}
Assume that $X$ is a closed orientable GKM-manifold. Then its orbit space $Q=X/T$ is $3$-acyclic, that is $\Hr^i(Q)=0$ for $i=0,1,2,3$.
\end{cor}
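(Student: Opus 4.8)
The plan is to recognize this corollary as nothing more than the specialization of Theorem~\ref{thmOrbitAcyclicityAnyComplexity} to the value $j=2$. First I would recall the standard definition of a GKM-manifold: a closed manifold $X$ equipped with a torus action having isolated fixed points, satisfying $H^{\odd}(X)=0$ (and hence equivariantly formal, as noted in the introduction), and such that at every fixed point the tangent weights $\alpha_{x,1},\ldots,\alpha_{x,n}$ are \emph{pairwise} linearly independent. The decisive observation is that this last requirement—pairwise linear independence of the weights at each fixed point—is exactly the statement that the action is in $2$-general position in the sense of the definition from the introduction, since $2$-general position asks precisely that any two of the weights be linearly independent over $\Qo$.

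Given this dictionary, the proof is immediate. The GKM hypothesis supplies exactly the two inputs demanded by Theorem~\ref{thmOrbitAcyclicityAnyComplexity}: the action is equivariantly formal, and it is in $j$-general position with $j=2\geqslant 1$. I would therefore simply invoke that theorem, which yields $\Hr^i(Q)=0$ for all $i\leqslant j+1=3$. This is precisely the asserted $3$-acyclicity, namely $\Hr^i(Q)=0$ for $i=0,1,2,3$.

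The only matters requiring attention are bookkeeping rather than substantive obstacles. One must confirm that the classical GKM condition on weights coincides with $2$-general position as defined here, and that the coefficient conventions align: the GKM setting is typically phrased over $R=\Qo$, while under the additional assumption of connected stabilizers one may equally take $R=\Zo$, exactly as in the hypotheses of Theorem~\ref{thmOrbitAcyclicityAnyComplexity}. Neither verification presents any real difficulty. All the mathematical content of the corollary is already contained in the theorem; the statement serves only to record the familiar GKM case as an illustrative special instance of the general acyclicity result.
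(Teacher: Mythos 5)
Your proposal is correct and follows exactly the paper's own argument: identify the GKM condition (equivariant formality plus pairwise linearly independent weights at fixed points) with $2$-general position and apply Theorem~\ref{thmOrbitAcyclicityAnyComplexity} with $j=2$ to get $\Hr^i(Q)=0$ for $i\leqslant 3$. Nothing is missing.
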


Indeed, a GKM-manifold is an equivariantly formal manifold such that, at any fixed point, any two weights are linearly independent. This means that the torus action on a GKM-manifold is in 2-general position and Theorem~\ref{thmOrbitAcyclicityAnyComplexity} applies.

In Sections~\ref{secSphereImpliesFormalitySimple}, \ref{secSpongesHomology}, and~\ref{secSphereImpliesFormality} we prove the result, which is converse to Theorem~\ref{thmEqFormImpliesSphere}. Under certain assumptions on the complexity one action in general position, the acyclicity of all proper faces $F$, the $(n-3)$-acyclicity of the sponge $Q_{n-2}$, and the $n$-acyclicity of the orbit space $Q$ imply that the action is equivariantly formal. This statement is more complicated than the direct theorem. In Section~\ref{secSphereImpliesFormalitySimple} we formulate the theorem and demonstrate the key ideas for the case $n=2$, that is for circle actions on 4-dimensional manifolds, which is the classical subject of equivariant topology.

In order to formulate and prove the general statement, we recall the notion of a sponge from~\cite{AyzCompl} and prove several homological statements about sponges in Section~\ref{secSpongesHomology}, in particular, we review the necessary notions about (co)sheaves on finite posets, and recall the construction of the dihomology spectral sequence, which plays an important role in the arguments. In Section~\ref{secSphereImpliesFormality}, we formulate and prove the general theorem (Theorem~\ref{thmSphereImpliesEqForm}), converse to Theorem~\ref{thmEqFormImpliesSphere}. The proof requires additional homological machinery, the sheaf of Atiyah--Bredon complexes being the main ingredient.

Our criterion is related to the result of Franz~\cite{Franz}, however there is one difference. In~\cite{Franz}, the criterion of equivariant formality is stated for locally standard actions, and the first step of this technique consists in making equivariant blow-ups so that a given action becomes locally standard. We work with a more restricted class of actions, however, in our considerations, we study the original space, not the blown-up space.

In Section~\ref{secBetti}, we express Betti numbers of equivariantly formal actions of complexity one in general position in terms of combinatorial and topological characteristics of the orbit space. We introduce the notion of \emph{the $h$-vector} of an abstract sponge. If a sponge is constructed from an equivariantly formal action on $X$, its $h$-vector encodes the Betti numbers of $X$ of even degrees. In general, we suppose that $h$-vectors of sponges have many common properties with the $h$-vectors of simple polytopes (Dehn--Sommerville relations, nonnegativity) and hope that there exists a combinatorial and algebraic theory of sponges parallel to the theory of Stanley--Reisner rings for simple polytopes and face rings of simplicial posets. Our optimistic belief is that there should exist a notion of a ``sponge algebra'', defined combinatorially in terms of generators and relations, and this notion will allow to describe equivariant cohomology algebras for equivariantly formal manifolds with complexity one actions in general position.

\section{Actions of complexity $1$ in general position}\label{secProofMain}

Let a torus $T=T^k$ act smoothly on a closed manifold $X=X^{2n}$. Our main tool is Atiyah--Bredon--Franz--Puppe sequence for equivariant cohomology:
\begin{multline}\label{eqABseq}
0\to H^*_T(X)\stackrel{i^*}{\to} H^*_T(X_0)\stackrel{\delta_0}{\to}
H^{*+1}_T(X_1,X_0)\stackrel{\delta_1}{\to}\cdots\\\cdots
\stackrel{\delta_{k-2}}{\to}H^{*+k-1}_T(X_{k-1},X_{k-2})\stackrel{\delta_{k-1}}{\to}H^{*+k}_T(X,X_{k-1})\to 0,
\end{multline}
where $\delta_i$ is the connecting homomorphism in the long exact sequence of equivariant cohomology of the triple $(X_{i+1},X_i,X_{i-1})$. It is known that whenever the action of $T^k$ on $X=X^{2n}$ is equivariantly formal, \eqref{eqABseq} is exact: Atiyah \cite{Atiyah} proved the analogous statement for K-theory, Bredon \cite{Bredon} proved it for cohomology with rational coefficients, and Franz--Puppe \cite{FP} obtained the result for cohomology over integers. However, in the latter case, it is required that all stabilizers of the torus action are connected (this requirement is a bit weaker in the paper \cite{FP}). Note that \eqref{eqABseq} is a sequence of $H^*(BT)$-modules. In the next construction we fix a notation to be used throughout the paper.

\begin{con}\label{conLowerRank}
Recall that $Q=X/T$ is the orbit space, and the spaces $Q_i=X_i/T$ form an orbit type filtration of $Q$. A closure of any connected component of $Q_i\setminus Q_{i-1}$ is called \emph{a face of rank $i$} of the action. The preimage of $F$ in $X$ is denoted $X_F$. Let $T_F\subset T$ denote the noneffective kernel of the induced $T$-action on $X_F$. The subspace $X_F$ is a connected component of the set $X^{T_F}$, therefore $X_F$ is a smooth submanifold, we call it \emph{the face submanifold} (of rank $i$) corresponding to $F$. If $X$ is orientable, then so is $X_F$.

The symbol $F_{-1}$ denotes the union of all proper subfaces of $F$. In other words, $F_{-1}=F\cap Q_{i-1}$, the intersection of $F$ with the lower strata of the action. Similarly, $(X_F)_{-1}$ denotes the preimage of $F_{-1}$ in $X$. The subset $F_{-1}$ can be thought as some sort of boundary of $F$. The whole orbit space can be obtained by inductively attaching faces $F$ along subsets $F_{-1}$. It should be noted however, that $F_{-1}$ may not coincide with the topological boundary of~$F$.
\end{con}

\begin{rem}
Recall that a topological pair $(B,C)$ is called \textit{a homology cell} (of rank $i$, over the coefficient ring $R$) if $\Hr_*(B;R)=0$ and $\Hr_j(B,C;R)=0$ for $j\neq i$ and $\Hr_i(B,C;R)\cong R$. Although it is not necessary for homological applications, we also require that $B$ is a CW-complex of dimension $i$, and $C$ is its subcomplex. \emph{A homology cell complex} is a topological space obtained by inductive attachment of homology cells (one can simply replace $(D^i,S^{i-1})$ with $i$-dimensional homology cells in the definition of a CW-complex).
\end{rem}

We now demonstrate our key ideas by proving Lemma~\ref{lemMasPan}.


\begin{proof}[Proof of Lemma~\ref{lemMasPan}]
We proceed by induction on $n$. The case $n=0$ is trivial. Assume that the statement holds for all $k<n$. Consider an equivariantly formal action of $T^n$ on $M^{2n}$. Let $P=M^{2n}/T^n$ denote the orbit space, $M_i$ : the equivariant $i$-skeleton of $M$, $\{P_i=M_i/T^n\}$ : the filtration of $P$, and $M_F$ : the face submanifold corresponding to a proper face $F$. According to \cite[Lm.2.2]{MasPan}, the manifold $M_F$ inherits the property of vanishing odd degree cohomology. Therefore, by induction hypothesis, a face $F$ is a homology disc for $\dim F<n$, and the pair $(F,\dd F)$ is a (co)homology cell. It can be shown directly, that for the actions of complexity zero, $F_{-1}$ coincides with the topological boundary $\dd F$ of a face $F$. Induction hypothesis implies that the face stratification of $P_{n-1}$ is a homology cell complex.

Now we write the ABFP-sequence \eqref{eqABseq} for $M$
\begin{multline}\label{eqABseqForM}
0\to H^*_T(M)\stackrel{i^*}{\to} H^*_T(M_0)\stackrel{\delta_0}{\to}
H^{*+1}_T(M_1,M_0)\stackrel{\delta_1}{\to}\cdots\\\cdots
\stackrel{\delta_{n-2}}{\to}H^{*+n-1}_T(M_{n-1},M_{n-2})\stackrel{\delta_{n-1}}{\to}H^{*+n}_T(M,M_{n-1})\to 0.
\end{multline}
Consider the $i$-th term in \eqref{eqABseqForM} for $i\leqslant n-1$. We have
\begin{equation}\label{eqRelativeSum}
H^*_T(M_{i},M_{i-1})\cong \bigoplus_{F\colon\dim F=i}H^*_T(M_F,M_F\cap M_{i-1})\cong \bigoplus_{F\colon\dim F=i}H^{*}(F,F_{-1})\otimes H^*(BT_F)
\end{equation}
since the action of $T/T_F$ is (almost) free\footnote{An almost free action is an action with finite stabilizers. This situation may occur if disconnected stabilizers are allowed for the original action, however in this case we take coefficients in $\Qo$. For an almost free action of $T$ on $X$ we have $H^*_T(X;\Qo)\cong H^*(X/T;\Qo)$.} on $M_F\setminus M_{i-1}$. If $\dim F<n$, then $F$ is a homology cell by induction hypothesis, therefore $H^{j}(F,F_{-1})=H^{j}(F,\dd F)=0$ for $j<\dim F$. Therefore $H^{*+i}_T(M_{i},M_{i-1})=0$ if $i<n$ and $*<0$. Taking $*<0$ in \eqref{eqABseqForM} we get the exact sequence
\begin{equation}\label{eqRelCohomM}
0\to 0\to\cdots \to 0\stackrel{\delta_{n-1}}{\to}H^{*+n}_T(M,M_{n-1})\to 0
\end{equation}
which implies that $H^{*+n}_T(M,M_{n-1})=0$ for $*<0$. However, $H^{*+n}_T(M,M_{n-1})\cong H^{*+n}(P,P_{n-1})$, hence we get the $(n-1)$-acyclicity of the pair $(P,P_{n-1})$ in cohomology.

We have $\dim P_i=i$ since $\dim T=n$. Consider the cohomological spectral sequence associated with the filtration $P_0\subset\cdots\subset P_{n-1}\subset P_n=P$:
\[
E_2^{p,q}\cong H^{p+q}(P_p,P_{p-1})\Rightarrow H^{p+q}(P).
\]
We have $E_2^{p,q}=0$ for $q>0$ by dimensional reasons. We also have $E_2^{p,q}=0$ for $q<0$: for $p=n$ this is the statement of the previous paragraph, and for $p<n$ this follows from $P_{n-1}$ being a homology cell complex. Hence we only have one nontrivial row $E_2^{p,0}$.

Let us look at the components of the exact sequence \eqref{eqABseqForM} of degree $0$. We get the exactness of the sequence
\begin{equation}\label{eqPexSeq}
0\to R\to H^0(P_0)\to H^1(P_1,P_0)\to\cdots
\to H^{n-1}(P_{n-1},P_{n-2})\to H^n(P,P_{n-1})\to 0.
\end{equation}
The differentials in~\ref{eqPexSeq} and~\eqref{eqABseqForM} agree, since they are the connecting homomorphisms of the triples $(P_{i+1},P_i,P_{i-1})$ and $(M_{i+1},M_i,M_{i-1})$ respectively, so they commute with the isomorphisms~\eqref{eqRelativeSum}. The sequence~\eqref{eqPexSeq} is exactly the nontrivial row of the first page of the spectral sequence $E_2^{*,*}$. Therefore, acyclicity of $\eqref{eqPexSeq}$ implies that $H^i(P)=0$ for $i>0$.

Since $M$ is orientable, the orbit space $P$ is an orientable homology manifold with the boundary $P_{n-1}$ (see details in Lemma~\ref{lemOrientability} below). Poincare--Lefschetz duality implies that the pair $(P,P_{n-1})$ has the same relative homology as $(D^n,S^{n-1})$. This proves that $(P,P_{n-1})$ is a homology cell and concludes the induction step.
\end{proof}

The orientability conditions are subtle, if one allows finite stabilizers for an action. The next lemma explains this issue.

\begin{lem}\label{lemOrientability}
Let $X$ be a smooth closed connected orientable manifold and $T=T^k$ acts effectively on $X$. As before, let $Q=X/T$ be the orbit space, and $X_{k-1}$ be the union of all orbits of dimension $\leqslant k-1$, and $Q_{k-1}=X_{k-1}/T$. Then $Q\setminus Q_{k-1}$ is an orientable rational homology manifold.
\end{lem}

\begin{proof}
At first notice that the rational homology manifold $Y$ is orientable if and only if $H_{\dim Y}^c(Y;\Qo)$, top-degree homology with closed supports, is nonzero.

Orbits lying in the manifold $X\setminus X_{k-1}$ have dimension $k$, therefore the action is almost free on this space. For this reason we will denote $X\setminus X_{k-1}$ by $X^{\af}$ and $Q\setminus Q_{k-1}$ by $Q^{\af}$. Let $H\subset T$ denote the product of all stabilizers of the action on $X^{\af}$. The subgroup $H$ is finite since the action on a closed manifold $X$ has only finitely many stabilizers and the acting group is commutative. Consider two maps
\[
X^{\af}\stackrel{f_1}{\to} X^{\af}/H \stackrel{f_2}{\to} X^{\af}/T=Q^{\af}.
\]
Here $X^{\af}/H$ is the underlying space of an orbifold, and $f_2$ is a principal bundle with toric fiber $T/H$. The action of a finite abelian group $H$ preserves the orientation of $X$ since the action of $T$ does. By the slice theorem, each point of $X^{\af}/H$ has a neighborhood $U$ homeomorphic to the quotient of euclidean space by a finite abelian group action preserving the orientation, therefore $X^{\af}/H$ is a rational homology manifold. This quotient is orientable since the image of the fundamental class $[X^{\af}]\in H_{\dim X}^c(X;\Qo)$ under the proper map $X^{\af}\to X^{\af}/H$ is nonzero (its restriction to a neighborhood of any point is nonzero).

The additional quotient of $X^{\af}/H$ by a free $(T/H)$-action results in a homology manifold, since any point of $X^{\af}/H$ has a neighborhood of the form $U\times \Ro^k$, where $U$ is a neighborhood of its projection in $Q^{\af}$, and the local homology of $Q^{\af}$ can be easily computed. We claim that the base $Q^{\af}$ of this principal bundle with the toric fiber $T/H$ is orientable whenever its total space $X^{\af}/H$ is orientable.

Let us introduce more convenient notation to prove this fact. Assume a torus $T$ of dimension $d-s$ acts freely on an orientable rational homology manifold $E$ of dimension $d$, and $p\colon E\to B=E/T$ is the natural projection to the orbit space which is a rational homology manifold of dimension $s$. Let $x$ be an arbitrary point in $B$. The result of~\cite[Lm.17]{Yang} implies, for any sufficiently small neighborhood $U\subset B$ of $x$, that the tubular neighborhood $V=p^{-1}(U)\subset E$ of the orbit $Tx$ is homeomorphic to the direct product $U\times T$ in a way that $p$ is compatible with the projection to the first factor.

The orientation of $E$ implies the existence of a fundamental class $[E]\in H^c_d(E;\Qo)$ in the top degree homology with closed supports. This distinguished class determines distinguished element in the group $H^c_d(V\setminus Tx;\Qo)$ which is naturally isomorphic to $H_d(V,V\setminus Tx;\Qo)$. Since the pair $(V,V\setminus Tx)$ is naturally homeomorphic to the product $(U,U\setminus x)\times T$, we can consider the slant product
\[
/\colon H_d(U\times T,(U\setminus x)\times T;\Qo)\otimes H^{d-s}(T;\Qo)\to H_s(U,U\setminus x;\Qo).
\]
Taking the slant product $\alpha_x=[V\setminus Tx]/\omega$ with the fixed generator $\omega\in H^{d-s}(T;\Qo)\cong\Qo$ we obtain a distinguished non-zero element of the local homology group $H_s(U,U\setminus\{x\};\Qo)\cong H_s(B,B\setminus\{x\};\Qo)$. These local orientations are compatible, since the fundamental classes $[V\setminus Tx]$ are compatible for nearby orbits. Therefore, $B$ is an orientable rational homology manifold.
\end{proof}

The arguments similar to the proof of Lemma~\ref{lemMasPan} given above will be applied to prove Theorem~\ref{thmEqFormImpliesSphere}. We prove the following, more general statement.

\begin{prop}\label{propAcyclicity}
Assume that the action of $T^{n-1}$ on an orientable manifold $X=X^{2m}$, $m\geqslant n$ is equivariantly formal and has nonempty finite fixed point set. Assume that $\dim X_i=2i$ and $\dim Q_i=i$ for all $i<n-1$ (that is all proper face submanifolds carry actions of complexity zero). Then the following hold for the orbit space $Q=X^{2m}/T^{n-1}$:
\begin{enumerate}
  \item all proper faces $F$ of $Q$ are acyclic, i.e. $\Hr^*(F)=0$;
  \item the sponge $Q_{n-2}$ is $(n-3)$-acyclic, i.e. $\Hr^i(Q_{n-2})=0$ for $i\leqslant n-3$;
  \item the orbit space $Q$ is $n$-acyclic, i.e. $\Hr^i(Q)=0$ for $i\leqslant n$.
  \item $H^i(Q,Q_{n-2})=0$ for $i\leqslant n$ and $i\neq n-1$.
\end{enumerate}
Again, it is assumed that either $R=\Zo$ and all stabilizers of the action are connected, or $R=\Qo$.
\end{prop}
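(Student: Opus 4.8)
The plan is to follow the scheme of the proof of Lemma~\ref{lemMasPan}, with the Atiyah--Bredon--Franz--Puppe sequence \eqref{eqABseq} as the principal tool; the new feature is that the top stratum is no longer a cell, so $Q$ will only be $n$-acyclic rather than acyclic. Statement (1) I would reduce directly to the complexity zero case. A proper face $F$ has $\dim F=i\leqslant n-2$, so by hypothesis $X_F$ has dimension $2i$ and the effective action of $T/T_F$ on it has complexity zero. By \cite[Lm.2.2]{MasPan} the submanifold $X_F$ inherits the vanishing of odd cohomology, hence is equivariantly formal, and Lemma~\ref{lemMasPan} shows that $F=X_F/(T/T_F)$ is acyclic and that its own face structure is a homological cell complex; in particular each pair $(F,F_{-1})=(F,\dd F)$ is a homological $i$-cell. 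Consequently the proper faces equip the sponge $Q_{n-2}$ with the structure of a homological cell complex.

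Next I would write \eqref{eqABseq} for $X$ and identify its terms exactly as in the proof of Lemma~\ref{lemMasPan}. For $i\leqslant n-2$ the almost free action of $T/T_F$ on the open stratum and the fact that $(F,F_{-1})$ is a homological $i$-cell give $H^{*+i}_T(X_i,X_{i-1})\cong\bigoplus_{\dim F=i}H^*(BT_F)$, while on top the free action of $T$ on $X\setminus X_{n-2}$ yields $H^{*+n-1}_T(X,X_{n-2})\cong H^{*+n-1}(Q,Q_{n-2})$. Thus, for each fixed $*$, the exact sequence \eqref{eqABseq} becomes
\begin{multline*}
0\to H^*_T(X)\to\bigoplus_{v}H^*(BT)\to\bigoplus_{\dim F=1}H^*(BT_F)\to\cdots\\
\cdots\to\bigoplus_{\dim F=n-2}H^*(BT_F)\to H^{*+n-1}(Q,Q_{n-2})\to 0.
\end{multline*}
Statement (4) is then immediate. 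For $*<0$ all the groups $H^*(BT_F)$ and $H^*_T(X)$ vanish, forcing $H^{*+n-1}(Q,Q_{n-2})=0$, i.e. $H^i(Q,Q_{n-2})=0$ for $i<n-1$; for $*=1$ the groups $H^1(BT_F)$ vanish by parity and $H^1_T(X)=0$ because $H^{\odd}(X)=0$, forcing $H^n(Q,Q_{n-2})=0$.

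For statement (2) I would specialise the displayed sequence to $*=0$. Then every $H^0(BT_F)=R$, and, as in Lemma~\ref{lemMasPan}, the connecting homomorphisms become the incidence coboundaries of the face structure, so the sequence is the augmented cochain complex of the homological cell complex $Q_{n-2}$ with one extra term appended:
\[
0\to R\to C^0(Q_{n-2})\to\cdots\to C^{n-2}(Q_{n-2})\stackrel{\delta}{\to}H^{n-1}(Q,Q_{n-2})\to 0.
\]
Exactness at the interior terms gives $\Hr^i(Q_{n-2})=0$ for $i\leqslant n-3$, which is (2). Moreover exactness at $C^{n-2}(Q_{n-2})$ and at the last term shows that $\delta$ is surjective with $\Ker\delta$ equal to the image of the cellular coboundary; since $\dim Q_{n-2}=n-2$ this means $\delta$ induces an isomorphism $\Hr^{n-2}(Q_{n-2})\toiso H^{n-1}(Q,Q_{n-2})$.

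Finally, statement (3) I would obtain by feeding (2) and (4) into the long exact cohomology sequence of the pair $(Q,Q_{n-2})$. Using that $\Hr^i(Q_{n-2})=0$ for $i\leqslant n-3$ (by (2)) and for $i\geqslant n-1$ (by dimension), together with $H^i(Q,Q_{n-2})=0$ for $i\leqslant n$, $i\neq n-1$ (by (4)), the sequence yields $\Hr^i(Q)=0$ for $i\leqslant n-3$ and for $i=n$, and collapses in the middle to
\[
0\to\Hr^{n-2}(Q)\to\Hr^{n-2}(Q_{n-2})\stackrel{\phi}{\to}H^{n-1}(Q,Q_{n-2})\to\Hr^{n-1}(Q)\to 0,
\]
so $\Hr^{n-2}(Q)=\Ker\phi$ and $\Hr^{n-1}(Q)=\Coker\phi$. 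Hence $n$-acyclicity reduces to proving that the connecting map $\phi$ of the pair is an isomorphism, and the main obstacle is to identify $\phi$, up to sign, with the map $\delta$ produced by \eqref{eqABseq}: both are the connecting homomorphism associated with the triple $(X,X_{n-2},X_{n-3})$ in degree zero, read off through the free-quotient identification on the top stratum. Once this compatibility of connecting homomorphisms is checked, the isomorphism $\Hr^{n-2}(Q_{n-2})\toiso H^{n-1}(Q,Q_{n-2})$ of the previous step shows $\phi$ is an isomorphism, giving $\Hr^{n-2}(Q)=\Hr^{n-1}(Q)=0$ and completing (3). Verifying this naturality, rather than any homological computation, is the delicate point.
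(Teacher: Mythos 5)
Your proposal is correct and follows essentially the same route as the paper's proof: reduce item (1) to Lemma~\ref{lemMasPan} via \cite[Lm.2.2]{MasPan}, rewrite the ABFP-sequence~\eqref{eqABseq} face-by-face, extract (4) from degrees $*<0$ and $*=1$, extract (2) and the isomorphism $\Hr^{n-2}(Q_{n-2})\toiso H^{n-1}(Q,Q_{n-2})$ from degree $*=0$, and feed everything into the long exact sequence of the pair $(Q,Q_{n-2})$. The ``delicate point'' you single out --- identifying the map induced by $\delta_{n-2}$ with the connecting homomorphism of the pair --- is exactly the step the paper also relies on, disposing of it with ``it is easy to check,'' since both maps arise from the same triple $(Q,Q_{n-2},Q_{n-3})$ under the identification $H^*_T(X,X_{n-2})\cong H^*(Q,Q_{n-2})$.
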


\begin{proof}
Since $X$ is equivariantly formal and $X^T$ is isolated, we have $H^{\odd}(X)=0$. Let $F$ be a proper $i$-dimensional face of $Q$, that is $i\leqslant n-2$. The face submanifold $X_F=p^{-1}(F)$ is a torus invariant submanifold of dimension $2i$ by assumption. According to \cite[Lm.2.2]{MasPan}, the manifold $X_F$ inherits the property of vanishing odd degree cohomology. Now, since $X_F$ is a manifold with an action of complexity zero, Lemma~\ref{lemMasPan} implies that its orbit space $F$ is acyclic. This proves item (1) of the proposition. Acyclicity of all proper faces also implies that the faces of $Q_{n-2}$ provide a structure of a homology cell complex on this space.

Now we write the ABFP-sequence \eqref{eqABseq} for $X$
\begin{multline}\label{eqABseqForX}
0\to H^*_T(X)\stackrel{i^*}{\to} H^*_T(X_0)\stackrel{\delta_0}{\to}
H^{*+1}_T(X_1,X_0)\stackrel{\delta_1}{\to}\cdots\\\cdots
\stackrel{\delta_{n-3}}{\to}H^{*+n-2}_T(X_{n-2},X_{n-3})\stackrel{\delta_{n-2}}{\to}H^{*+n-1}_T(X,X_{n-2})\to 0.
\end{multline}

Consider the $i$-th term in \eqref{eqABseqForX} with $i\leqslant n-2$:
\begin{equation}\label{eqTech1}
H^{*+i}_T(X_{i},X_{i-1})\cong \bigoplus_{F\colon\dim F=i}H^{*+i}_T(X_F,X_F\cap X_{i-1})\cong \bigoplus_{F\colon\dim F=i}H^{i}(F,\dd F)\otimes H^*(BT_F).
\end{equation}
The latter isomorphism is due to the following facts: (1) the action of $T^{n-1}/T_F$ on $X_F\setminus X_{i-1}$ is (almost) free, (2) $(F,\dd F)$ is a homology cell. Similarly, for the rightmost term in \eqref{eqABseqForX} we have
\begin{equation}\label{eqTech2}
H^{*+n-1}_T(X,X_{n-2})\cong H^{*+n-1}(Q,Q_{n-2}) 
\end{equation}
since the torus action is (almost) free on $Q\setminus Q_{n-2}$. Specializing~\eqref{eqABseqForX} to a degree $*<0$, we get the exact sequence
\[
0\to 0\to\cdots \to 0\to H^{*+n-1}_T(X,X_{n-2})\to 0.
\]
This observation shows that
\begin{equation}\label{eqTechAcycLow}
H^i(Q,Q_{n-2})=0 \mbox{ for } i\leqslant n-2.
\end{equation}
From \eqref{eqTech1} and \eqref{eqTech2}, it follows that the ABFP-sequence can be written in the form
\begin{multline}\label{eqABseqInFaces}
0\to H^*_T(X)\stackrel{i^*}{\to} \bigoplus_{F\colon \dim F=0}H^*(BT_F)\stackrel{\delta_0}{\to}\cdots\\\cdots
\stackrel{\delta_{n-3}}{\to}\bigoplus_{F\colon\dim F=n-2}H^*(BT_F)\stackrel{\delta_{n-2}}{\to}H^{*+n-1}(Q,Q_{n-2})\to 0.
\end{multline}
Specializing to degree 0 (the lowest nontrivial degree) in each module, we get
\begin{equation}\label{eqABseqAtDeg0}
0\to R\to \bigoplus_{F\colon \dim F=0}R\stackrel{\delta_0}{\to}\cdots
\stackrel{\delta_{n-3}}{\to}\bigoplus_{F\colon \dim F=n-2}R\stackrel{\delta_{n-2}}{\to}H^{n-1}(Q,Q_{n-2})\to 0,
\end{equation}
where $\delta_i$ for $i\leqslant n-2$ is the connecting homomorphism in the cohomological exact sequence of the triple $(Q_{i+1},Q_i,Q_{i-1})$. The truncated sequence
\begin{equation}\label{eqABseTrunc}
0\to R\to \bigoplus_{F\colon\dim F=0}R\stackrel{\delta_0}{\to}\cdots
\stackrel{\delta_{n-3}}{\to}\bigoplus_{F\colon\dim F=n-2}R\to 0,
\end{equation}
is the reduced complex of cellular cochains of the homology cell complex $Q_{n-2}$:
\[
\bigoplus_{F\colon\dim F=i}R\cong H^i(Q_i,Q_{i-1}).
\]
Hence, acyclicity of the ABFP-sequence implies that
\begin{equation}\label{eqTechAcycLow2}
\Hr^i(Q_{n-2})=0 \mbox{ for } i\leqslant n-3.
\end{equation}
This proves item (2) of the proposition.

The acyclicity of \eqref{eqABseqAtDeg0} at the last two terms implies that the induced homomorphism
\[
\widetilde{\delta}_{n-2}\colon \left(\bigoplus\nolimits_{F\colon\dim F=n-2}R\right)/\Imm\delta_{n-3}\to H^{n-1}(Q,Q_{n-2})
\]
is an isomorphism. However, according to \eqref{eqABseTrunc} the module $\left(\bigoplus_{F\colon\dim F=n-2}R\right)/\Imm\delta_{n-3}$ coincides with the cellular cohomology module $H^{n-2}(Q_{n-2})$ and the map $\widetilde{\delta}_{n-2}$ is induced by $\delta_{n-2}$ by passing to cellular cohomology. Therefore,
\begin{equation}\label{eqTechAcycInterest}
\widetilde{\delta}_{n-2}\colon H^{n-2}(Q_{n-2})\to H^{n-1}(Q,Q_{n-2})
\end{equation}
is an isomorphism. It is easy to check that $\widetilde{\delta}_{n-2}$ coincides with the connecting homomorphism in the long exact sequence of the pair $(Q,Q_{n-2})$.

Putting $*=1$ in \eqref{eqABseqInFaces}, we have
\begin{equation}\label{eqTechAcycHigh}
H^{n}(Q,Q_{n-2})=0,
\end{equation}
since the previous term $\bigoplus_{F\colon\dim F=n-2}H^*(BT_F)$ vanishes for odd $*$. Item (4) of the proposition is justified by~\eqref{eqTechAcycLow} and~\eqref{eqTechAcycHigh}.

Gathering \eqref{eqTechAcycLow}, \eqref{eqTechAcycLow2}, \eqref{eqTechAcycInterest}, \eqref{eqTechAcycHigh} together, we see that the connecting homomorphisms
\[
\Hr^{i-1}(Q_{n-2})\to H^i(Q,Q_{n-2})
\]
in the long exact sequence of the pair $(Q,Q_{n-2})$ are isomorphisms for all $i\leqslant n$. Hence $\Hr^i(Q)=0$ for $i\leqslant n$. This proves item (3) of the proposition.

The $\Qo$-version of the proposition follows the same lines, since we only used ABFP-sequence, which is exact over $\Qo$ if disconnected stabilizers are allowed.
\end{proof}

As a corollary, we obtain a proof of Theorem~\ref{thmEqFormImpliesSphere}.

\begin{proof}[Proof of Theorem~\ref{thmEqFormImpliesSphere}]
If a torus $T^{n-1}$ acts on an orientable manifold $X^{2n}$ in general position, then, according to \cite{AyzCompl} all proper face submanifolds carry an action of complexity zero (also see Lemma~\ref{lemJgenForFace} below). Therefore, Proposition~\ref{propAcyclicity} applies. Now, the orbit space $Q$ is a closed topological manifold of dimension $(n+1)$ by~\cite[Thm.2.10]{AyzCompl}, while the subspace $Q_{n-2}$ has dimension $n-2$. We have $H_{n+1}^c(Q)\cong H_{n+1}^c(Q\setminus Q_{n-2})$, and the latter group is nonzero since $Q^{\af}=Q\setminus Q_{n-2}$ is orientable by Lemma~\ref{lemOrientability}. The $(n+1)$-dimensional closed orientable manifold $Q$ is $n$-acyclic by Proposition~\ref{propAcyclicity}. Hence $Q$ is a homology sphere by Poincar\'{e}--Lefschetz duality.
\end{proof}

\section{General actions in $j$-general position}\label{secNonGeneral}

In this section we prove Theorem~\ref{thmOrbitAcyclicityAnyComplexity}. Assume that a torus $T=T^k$ acts effectively on a connected closed smooth manifold $X=X^{2n}$ and $H^{\odd}(X)=0$. The action is equivariantly formal and has complexity $n-k$. At first, we give some comments on actions in $j$-general position. Note that the action is in $1$-general position if and only if all its weights are nonzero. This means that fixed points of the action are isolated. It will be assumed that $j\geqslant 1$, so that all actions under consideration have finite sets of fixed points.

Theorem~\ref{thmOrbitAcyclicityAnyComplexity} will be proved by induction on $k$. Let $F$ be a face of $Q=X/T$. The number $\dim (T/T_F)$ will be called the rank of $F$ and denoted $\rk F$. Therefore the filtration term $Q_i$ is the union of all faces of rank $i$. The complexity of the action on $X_F$ will be denoted
\[
\com F=\frac{1}{2}\dim X_F-\dim(T/T_F)=\frac{1}{2}(\dim F-\rk F).
\]
To perform the induction argument we need a technical but simple statement.

\begin{lem}\label{lemJgenForFace}
Assume that an action of $T=T^k$ on $X=X^{2n}$ is in $j$-general position, $j\leqslant k$. Let $F$ be a face of $Q=X/T$ and $X_F\subset X$ be the corresponding face submanifold. Then the following hold:
\begin{enumerate}
  \item $\com(F)\leqslant\com(Q)=n-k$;
  \item for every face $F$ of rank $<j$, the action of $T/T_F$ on $X_F$ has complexity zero;
  \item for every face $F$ of rank $j$, the action of $T/T_F$ on $X_F$ is in $j$-general position.
\end{enumerate}
\end{lem}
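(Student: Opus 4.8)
The plan is to reduce each assertion to local weight data at a single fixed point of $X_F$ and then argue by elementary linear algebra over $\Qo$. Since the action is equivariantly formal, Condition~\eqref{eqCondJoint} guarantees that $F$ has a vertex, so $X_F$ contains a fixed point $x\in X^T$. By the slice theorem a neighborhood of $x$ in $X$ is equivariantly diffeomorphic to the tangent representation $\bigoplus_{i=1}^n\Co_{\alpha_{x,i}}$, and the face submanifold $X_F$, being a component of a fixed-point set of a subtorus, is locally the coordinate subspace $\bigoplus_{i\in S}\Co_{\alpha_{x,i}}$ for a subset $S\subseteq\{1,\dots,n\}$; hence $\tfrac12\dim X_F=|S|$. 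Because $X_F$ is connected and the fixed set of any $t\in T$ is a closed submanifold, an element acting trivially on a neighborhood of $x$ in $X_F$ acts trivially on all of $X_F$; this shows $T_F=\bigcap_{i\in S}\Ker\alpha_{x,i}$, so that $\rk F=\dim(T/T_F)=r_S$ where $r_S:=\dim_\Qo\langle\alpha_{x,i}:i\in S\rangle$, and therefore $\com(F)=|S|-r_S$.

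For item (1) I would first note that effectiveness forces the full family $\alpha_{x,1},\dots,\alpha_{x,n}$ to span $\Qo^k$ (its annihilator is the Lie algebra of the ineffective kernel of the whole action, which is trivial), so $\com(Q)=n-k$. Adjoining the $n-|S|$ remaining weights to the family indexed by $S$ raises the rank by at most $n-|S|$, giving $k\leqslant r_S+(n-|S|)$, i.e.\ $r_S\geqslant|S|-(n-k)$, whence $\com(F)=|S|-r_S\leqslant n-k$. For item (2), suppose $\rk F=r_S<j$. If $|S|\geqslant j$ we could choose $j$ of the weights $\{\alpha_{x,i}:i\in S\}$, which are linearly independent by $j$-general position, forcing $r_S\geqslant j$, a contradiction. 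Hence $|S|<j\leqslant k\leqslant n$, so $\{\alpha_{x,i}:i\in S\}$ extends to a $j$-element subfamily that is independent by $j$-general position; in particular $\{\alpha_{x,i}:i\in S\}$ is independent, so $r_S=|S|$ and $\com(F)=0$.

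For item (3), with $\rk F=j$, I would identify the weights of the effective $T/T_F$-action on $X_F$. The quotient $T\to T/T_F$ induces an injection $\Hom(T/T_F,T^1)\hookrightarrow\Hom(T,T^1)$ whose rationalization identifies $\Hom(T/T_F,T^1)\otimes\Qo$ with the annihilator of $\mathrm{Lie}(T_F)$, namely $\langle\alpha_{x,i}:i\in S\rangle_\Qo$, and under this identification the weight $\bar\alpha_{x,i}$ of $X_F$ at $x$ corresponds exactly to $\alpha_{x,i}$. Consequently independence of any $j$ of the weights $\{\bar\alpha_{x,i}\}$ is equivalent to independence of the corresponding $\{\alpha_{x,i}\}$. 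Since the fixed points of the $T/T_F$-action are precisely $X_F\cap X^T$, running the same argument at every such point $y$ and invoking $j$-general position of the original action shows that $X_F$ is in $j$-general position; the equality $\rk F=j=\dim(T/T_F)$ makes this condition meaningful.

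All of the bookkeeping is elementary, and I expect the only genuinely delicate point to be the passage between the global invariants $\dim X_F$, $T_F$, $\rk F$ and the local weight data $\{\alpha_{x,i}:i\in S\}$: this rests on the slice theorem together with the connectedness of $X_F$, and on the verification that the lattice map induced by $T\to T/T_F$ preserves $\Qo$-linear independence of weights. Once these are in place, items (1)--(3) are immediate rank computations.
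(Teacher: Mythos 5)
Your proof is correct and follows essentially the same route as the paper's: both reduce everything to the tangent weights at a fixed point of $X_F$, observe that the weights $\{\alpha_{x,i}\}_{i\in S}$ of $X_F$ lie in (and span) the annihilator of $\mathrm{Lie}(T_F)$, and then deduce items (1)--(3) by the identical rank counts ($n-|S|\geqslant k-\rk F$ for (1), independence of any $\leqslant j$ weights for (2), and restriction of $j$-generality via $\Hom(T/T_F,T^1)\hookrightarrow\Hom(T,T^1)$ for (3)). The only difference is cosmetic: you spell out the slice-theorem identification of $X_F$ with a coordinate subspace, the equality $T_F=\bigcap_{i\in S}\Ker\alpha_{x,i}$ via connectedness, and the appeal to Condition~\eqref{eqCondJoint} for a fixed point in $X_F$, all of which the paper leaves implicit.
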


\begin{proof}
Let $\ttt$ and $\ttt_F$ be the Lie algebras of $T$ and $T_F$ respectively, so that $\ttt\cong \Ro^k$ and $\ttt_F\cong \Ro^{k-\rk F}$. Let $\alpha_1,\ldots,\alpha_n\in\Hom(T,T^1)$ be the tangent weights of the action at some fixed point $x\in X_F\subset X$. The weights of the induced action of $T$ on $X_F$ are given by some subset $\{\alpha_i\}_{i\in A}$, $A\subset[n]$. Since $T_F$ fixes $X_F$ pointwise, the identity $\langle w, \alpha_i\rangle=0$ holds for any $w\in \ttt_F$ and any $i\in A$. Here we assume that the weight lattice $\Hom(T,T^1)$ is naturally embedded in $\ttt^*$. Therefore, the vectors $\{\alpha_i\}_{i\in A}$ lie in the annihilator $\ttt_F^{\bot}\cong \Ro^{\rk F}$.

The vectors $\{\alpha_i\}_{i\in [n]}$ linearly span the space $\ttt^*$ since the action of $T$ on $X$ is effective (if  $\{\alpha_i\}_{i\in [n]}$ do not span $\ttt^*$, the nonzero subspace $\bigcap_{i\in[n]}\Ker \alpha_i$ would be the tangent Lie algebra of the noneffective kernel of the action). Similarly, the vectors $\{\alpha_i\}_{i\in A}$ linearly span the space $\ttt_F^{\bot}$ since the action of $T/T_F$ on $X_F$ is effective. Therefore the complement $[n]\setminus A$ contains at least $\dim\ttt^*-\dim\ttt_F^{\bot} = k-\rk F$ elements. Hence
\[
\com Q -\com F=(n-k)-(|A|-\rk F)\geqslant 0
\]
which proves item (1). Now let $\rk F<j$. This condition together with $j$-generality implies that any $\rk F+1\leqslant j$ weights are linearly independent. If $|A|=\frac{1}{2}\dim X_F>\rk F$, then $X_F$ has at least $\rk F+1$ many weights at a fixed point. These weights lie in the space $\ttt_F^{\bot}$ of dimension $\rk F$, hence they are linearly dependent which gives a contradiction. Hence $\frac{1}{2}\dim X_F=\rk F$ and therefore $X_F$ has complexity 0 which proves~(2). If $\rk F\geqslant j$, then every $j\leqslant \rk F$ of the weights $\{\alpha_i\}_{i\in A}$ are linearly independent, therefore the induced effective action of $T/T_F$ on $X_F$ is in $j$-general position by definition, which proves~(3).
\end{proof}

This lemma implies

\begin{lem}\label{lemJgeneralTech}
Under the assumptions of Theorem~\ref{thmOrbitAcyclicityAnyComplexity}, the following acyclicity conditions hold:
\begin{enumerate}
  \item $H^s(Q_i,Q_{i-1})=0$ for $i<j$ and $s\neq i$;
  \item $H^s(Q_j,Q_{j-1})=0$ for $s<j$ and for $s=j+1$.
\end{enumerate}
\end{lem}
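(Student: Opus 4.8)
The plan is to handle the two items separately: item (1) is immediate from the complexity-zero theory, while item (2) requires running the Atiyah--Bredon--Franz--Puppe sequence \eqref{eqABseq} on each individual face submanifold of rank $j$.

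For item (1), fix $i<j$ and a face $F$ of rank $i$. By Lemma~\ref{lemJgenForFace}(2) the induced action of $T/T_F$ on $X_F$ has complexity zero, and $X_F$ inherits $H^{\odd}(X_F)=0$ by \cite[Lm.2.2]{MasPan}, so $X_F$ is equivariantly formal. Lemma~\ref{lemMasPan} then shows that $F=X_F/(T/T_F)$ is a homology disc and that $(F,\dd F)$ is a homological cell of dimension $i$; in particular $H^s(F,\dd F)=0$ for $s\neq i$. Since all faces of rank $\leqslant i<j$ are complexity zero, $F\cap Q_{i-1}=\dd F$, and distinct rank-$i$ faces meet only inside $Q_{i-1}$; hence by excision $H^s(Q_i,Q_{i-1})\cong\bigoplus_{\rk F=i}H^s(F,\dd F)$, which vanishes for $s\neq i$. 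This is item (1).

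For item (2), fix a face $F$ of rank $j$. By Lemma~\ref{lemJgenForFace}(3) the action of $T/T_F\cong T^j$ on $X_F$ is in $j$-general position, and again $H^{\odd}(X_F)=0$, so $X_F$ is equivariantly formal and its ABFP-sequence is exact. Writing $(X_F)_\bullet$ for the equivariant skeleta and $F_{-1}=F\cap Q_{j-1}=(X_F)_{j-1}/T$, this sequence reads
\begin{multline*}
0\to H^*_{T/T_F}(X_F)\to\cdots\to H^{*+j-1}_{T/T_F}((X_F)_{j-1},(X_F)_{j-2})\\ \to H^{*+j}_{T/T_F}(X_F,(X_F)_{j-1})\to 0.
\end{multline*}
For $i<j$ every sub-face $G\subset F$ of rank $i$ is complexity zero (Lemma~\ref{lemJgenForFace}(2)), so, exactly as in the proof of Proposition~\ref{propAcyclicity}, the term $H^{*+i}_{T/T_F}((X_F)_i,(X_F)_{i-1})$ decomposes into a sum over such $G$ of copies of $H^i(G,\dd G)$ tensored with the cohomology of a classifying space of a torus, the latter being concentrated in even nonnegative degrees. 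The last term identifies with $H^{*+j}(F,F_{-1})$ because $T/T_F$ acts (almost) freely on $X_F\setminus(X_F)_{j-1}$. Specializing to $*<0$ annihilates every term except possibly the last, whence $H^s(F,F_{-1})=0$ for $s<j$. Specializing to $*=1$, the penultimate term lies in total degree $j$ and thus equals $\bigoplus_{\rk G=j-1}H^{j-1}(G,\dd G)\otimes H^1(B(T_G/T_F))=0$, since the first odd cohomology of a torus classifying space vanishes; exactness at the last term then forces $H^{j+1}(F,F_{-1})=0$. Summing over rank-$j$ faces via $H^s(Q_j,Q_{j-1})\cong\bigoplus_{\rk F=j}H^s(F,F_{-1})$ gives item (2).

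The main point requiring care is the degree bookkeeping in the ABFP-sequence for $X_F$: one must verify that the complexity-zero sub-faces place all intermediate terms in nonnegative internal degree (for the $s<j$ vanishing), and that, for the $s=j+1$ assertion, the penultimate term sits in an odd total degree where the torus classifying-space factor contributes nothing. The $\Qo$-version is identical, using exactness of the ABFP-sequence over $\Qo$ for actions with possibly disconnected stabilizers.
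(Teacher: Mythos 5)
Your proof is correct and takes essentially the same route as the paper: item (1) via Lemma~\ref{lemJgenForFace}(2) together with Lemma~\ref{lemMasPan}, and item (2) by running the ABFP-sequence on each rank-$j$ face submanifold $X_F$ and specializing to internal degrees $*<0$ and $*=1$. The only difference is presentational: the paper simply cites item (4) of Proposition~\ref{propAcyclicity} (applied with $n-1=j$) for each such $X_F$, whereas you inline that proposition's proof.
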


\begin{proof}
According to Lemma~\ref{lemJgenForFace}, all faces of the $(j-1)$-skeleton $Q_{j-1}$ correspond to complexity zero case. Therefore $Q_{j-1}$ is a homology cell complex by Lemma~\ref{lemMasPan}. This proves (1).

Further, we have $H^s(Q_j,Q_{j-1})=\bigoplus_{\rk F=j}H^s(F,F_{-1})$. As before, $F_{-1}$ and $(X_F)_{-1}$ denote the union of faces, resp. face submanifolds of lower rank, see Construction~\ref{conLowerRank}. According to Lemma~\ref{lemJgenForFace}, each proper face submanifold of $X_F$ carries the action of complexity 0. Therefore, we can apply Proposition~\ref{propAcyclicity} for each face manifold $X=X_F$ having rank $n-1=j$. Item (4) of Proposition~\ref{propAcyclicity} shows that
\[
H^s(Q_j,Q_{j-1})=\bigoplus_{\rk F=j}H^s(F,F_{-1})=0
\]
for $s<j$ and for $s=j+1$.
\end{proof}

Now we prove one more statement concerning acyclicity of certain relative pairs. The next lemma generalizes one of the arguments used in the proof of Theorem~\ref{thmEqFormImpliesSphere} to actions of arbitrary complexity.

\begin{lem}\label{lemRelativeVanish}
The relative cohomology modules $H^i(F,F_{-1})$ vanish for $i<\rk F$.
\end{lem}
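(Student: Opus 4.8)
The plan is to prove the statement by induction on the rank $r=\rk F$, running the same ABFP-sequence argument used in Proposition~\ref{propAcyclicity}, but now applied to the face submanifold $X_F$ with its effective $T/T_F$-action rather than to $X$ itself. The base case $\rk F=0$ is vacuous: then $F$ is a vertex, $F_{-1}=\emptyset$, and the assertion concerns cohomology in negative degrees. For the inductive step, I would fix a face $F$ of rank $r$ and assume the lemma for every face of rank $<r$.

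First I would check that the induced action of the torus $T/T_F$ (of dimension $r$) on $X_F$ satisfies all the hypotheses needed to run the ABFP machinery of \eqref{eqABseq}: it is effective and of the correct rank by the definition of $T_F$; it is equivariantly formal because $H^{\odd}(X_F)=0$ by \cite[Lm.2.2]{MasPan}; its fixed point set is finite and nonempty, since each face has a vertex by \eqref{eqCondJoint}; and the stabilizers of $T/T_F$ on $X_F$ are quotients by $T_F$ of the connected $T$-stabilizers, hence connected, so that \eqref{eqABseq} is exact over $\Zo$ (and over $\Qo$ unconditionally). I would then write this sequence for $X_F$, whose orbit-type filtration terminates at $(X_F)_r=X_F$:
\[
0\to H^*_{T/T_F}(X_F)\to H^*_{T/T_F}((X_F)_0)\stackrel{\delta_0}{\to}\cdots\stackrel{\delta_{r-1}}{\to}H^{*+r}_{T/T_F}(X_F,(X_F)_{r-1})\to 0.
\]

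The key computation is the identification of the interior terms. For $i<r$, decomposing over the rank-$i$ faces $G$ of $F$ and using that $T/T_F$ acts on $X_G$ with noneffective kernel $T_G/T_F$ and effective quotient $T/T_G$ of dimension $\rk G=i$ acting almost freely on the top stratum of $X_G$, I obtain
\[
H^{*+i}_{T/T_F}((X_F)_i,(X_F)_{i-1})\cong\bigoplus_{G\colon\rk G=i}H^{*+i}(G,G_{-1})\otimes H^*(B(T_G/T_F)).
\]
Since each such $G$ has rank $i<r$, the induction hypothesis gives $H^{*+i}(G,G_{-1})=0$ whenever $*+i<\rk G=i$, i.e. for $*<0$, so every interior term vanishes in negative degrees. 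Exactness of the sequence in degrees $*<0$ then forces $H^{*+r}_{T/T_F}(X_F,(X_F)_{r-1})=0$ for $*<0$; and because $T/T_F$ acts almost freely on $X_F\setminus(X_F)_{r-1}$, this module is $H^{*+r}(F,F_{-1})$. Hence $H^i(F,F_{-1})=0$ for $i<r=\rk F$, completing the induction step.

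The main obstacle I anticipate is bookkeeping rather than conceptual. One must verify carefully that equivariant formality and connectedness of stabilizers descend to every $X_F$, that the relative equivariant cohomology of each skeletal pair splits as $H^{*}(G,G_{-1})\otimes H^*(B(T_G/T_F))$ with the correct degree shift, and—most importantly for well-foundedness—that the faces $G$ entering the decomposition genuinely have strictly smaller rank, which is precisely what the definition of $F_{-1}$ as the union of the lower-rank faces of $F$ guarantees.
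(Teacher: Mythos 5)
Your proposal is correct and takes essentially the same route as the paper: the paper's proof also inducts on $\rk F$, reduces the inductive step to the case $F=Q$ (i.e., runs the ABFP-sequence on the face submanifold with its induced action), decomposes the term $H^*_T(X_{s-1},X_{s-2})$ over the rank-$(s-1)$ faces $G$ as $\bigoplus_G H^*(G,G_{-1})\otimes H^*(BT_G)$, kills it in the relevant degrees by the induction hypothesis, and reads off the vanishing of $H^{*+s}_T(X,X_{s-1})\cong H^{*+s}(Q,Q_{-1})$ for $*<0$ from exactness. Your explicit verification that equivariant formality, condition \eqref{eqCondJoint}, and connectedness of stabilizers descend to each $X_F$ merely spells out details the paper leaves implicit.
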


\begin{proof}
The proof goes by induction on $\rk F$. If $\rk F=0$ then $F$ is nonempty and $F_{-1}=\varnothing$, so for $i<0$ there is nothing to prove. Now assume that the statement holds for all $F$ with $\rk F<s$ and prove it for $F=Q$, $\rk Q=s$. As in Section~\ref{secProofMain}, we write down the ABFP-sequence for the manifold $X$ over $Q$:
\begin{equation}\label{eqABseqXF}
0\to H^*_T(X)\stackrel{i^*}{\to} H^*_T(X_0)\stackrel{\delta_0}{\to}\cdots
\stackrel{\delta_{s-2}}{\to}H^{*+s-1}_T(X_{s-1},X_{s-2})\stackrel{\delta_{s-1}}{\to}H^{*+s}_T(X,X_{s-1})\to 0
\end{equation}
Since $X$ is equivariantly formal, the sequence is exact. Further, we have
\[
H^*_T(X_{s-1},X_{s-2})\cong \bigoplus_{F\colon \rk F=s-1} H^*_T((X_F),(X_F)_{-1})\cong \bigoplus_{F\colon \rk F=s-1} H^*(F,F_{-1})\otimes H^*(BT_F).
\]
The group $H^i(F,F_{-1})$ vanishes for $i<s-1$ by induction hypothesis. Hence $H^i_T(X_{s-1},X_{s-2})$ vanishes for $i<s-1$ as well. Specializing \eqref{eqABseqXF} to $*<0$, we deduce that $H^i_T(X,X_{s-1})\cong H^i(Q,Q_{-1})$ vanishes for $i<s=\rk Q$.
\end{proof}

\begin{proof}[Proof of Theorem~\ref{thmOrbitAcyclicityAnyComplexity}]
Consider the cohomological spectral sequence associated with the filtration $\{Q_i\}$ of $Q$:
\[
E^{p,q}_1\cong H^{p+q}(Q_p,Q_{p-1}) \Rightarrow H^{p+q}(Q).
\]
The terms $E^{p,q}_1$ with $q<0$ vanish by Lemma \ref{lemRelativeVanish}. The terms $E^{p,q}_1$ with $p<j$ and $q>0$ vanish by item (1) of Lemma~\ref{lemJgeneralTech}. The term $E_1^{j,1}$ vanishes by item (2) of Lemma \ref{lemJgeneralTech}. The complex $(E^{p,0}_1,d_1)$ is nontrivial. However this differential complex coincides with the ABFP-sequence \eqref{eqABseqXF} specialized at degree $0$. Since ABFP-sequence is exact, we have $E_2^{p,0}=0$. These considerations show that $E_2^{p,q}=0$ for $p+q\leqslant j+1$, which proves the theorem.
\end{proof}

So far, in general, there is a topological restriction on the orbit spaces of equivariantly formal actions with isolated fixed points: they are always $2$-acyclic. If the action is in $j$-general position, then the orbit space is $(j+1)$-acyclic. From the homological point of view, however, this is the only restriction which we can obtain, at least for the actions of complexity one. In the joint work \cite{AyzCher} of the first author and Cherepanov, the following statement was proved.

\begin{prop}[{\cite[Thm.2]{AyzCher}}]\label{propAyzCherep}
For any finite simplicial complex $L$, there exists a closed smooth manifold $X^{2n}$ with $H^{\odd}(X^{2n})=0$, and the action of $T^{n-1}$ in $j$-general position, $j\geqslant 1$, such that the orbit space $Q^{n+1}=X^{2n}/T^{n-1}$ is homotopy equivalent to the $(j+2)$-fold suspension $\Sigma^{j+2}L$.
\end{prop}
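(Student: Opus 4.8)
The plan is to treat this as a realization problem and to construct the manifold $X$ explicitly from the combinatorial data of $L$, arranging matters so that the two delicate conditions --- $j$-general position of the tangent weights and the homotopy type of the quotient --- are built in from the start. Crucially, the ambient dimension $n$ is not prescribed but may be chosen as large as we like depending on $L$ and $j$; this gives room to embed $L$ and to add the auxiliary ``generic'' directions. First I would fix a simplicial model of $L$ and choose building blocks: a complexity-zero equivariantly formal manifold (for instance a smooth projective toric variety, or a quasitoric manifold $M$ over a simple polytope) whose combinatorics encodes $L$, together with a linear building block, namely a $2n$-sphere carrying a $T^{n-1}$-action in general position whose quotient is already a sphere $S^{n+1}\cong\Sigma^{n+1}S^0$ (such models exist by the restriction-to-a-generic-subtorus construction described in item (3) of the introduction). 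The complexity-one manifold $X$ is then assembled from these pieces by an equivariant gluing --- equivariant connected sums together with a bundle- or join-type construction over the complexity-zero model --- engineered so that the quotient map $p\colon X\to Q$ collapses precisely the faces needed to turn $|L|$ into its $(j+2)$-fold suspension $\Sigma^{j+2}L = S^{j+1}*|L|$.

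Second, I would verify the algebraic hypotheses. Vanishing of $H^{\odd}(X)$ should follow because each building block has vanishing odd cohomology (complexity-zero equivariantly formal manifolds do, and the linear sphere model is cohomologically even), while the gluing is arranged so that a Mayer--Vietoris or Leray--Hirsch argument preserves this. Equivariant formality is then automatic, since $H^{\odd}(X)=0$ forces degeneration of the Serre spectral sequence \eqref{eqSerreSeq}. The $j$-general position is the condition I would impose by hand: choosing the weights of the linear model and the characteristic data of the toric piece sufficiently generically ensures that any $j$ tangent weights at each fixed point are linearly independent over $\Qo$, and Lemma~\ref{lemJgenForFace} guarantees that this genericity descends to all the face submanifolds appearing in the gluing, so low-rank faces remain complexity-zero.

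Third, I would identify the quotient up to homotopy. Using the orbit-type filtration $Q_0\subset\cdots\subset Q_{n-1}=Q$ together with Lemma~\ref{lemJgenForFace}, I would show that the sponge produced by the construction is homotopy equivalent to a suspended copy of $|L|$, and that collapsing the generic linear directions supplies the remaining $S^{j+1}$ join factor, giving $Q\simeq S^{j+1}*|L|\simeq\Sigma^{j+2}L$. The equivalence itself I would extract from a Mayer--Vietoris comparison between the decomposition of $Q$ coming from the gluing and the standard join decomposition of $\Sigma^{j+2}L$, checking that the comparison map induces isomorphisms on homology and then invoking Whitehead's theorem on the simply connected (indeed $(j+1)$-connected) spaces in play.

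The main obstacle is the tension between the local and global constraints. The $j$-general position condition is a rigid, pointwise linear-algebra requirement imposed on the tangent weights at every one of the finitely many fixed points, whereas the prescribed homotopy type $\Sigma^{j+2}L$ is a global condition on the quotient; an \emph{arbitrary} finite complex $L$ must be realized while every local weight configuration stays generic. Producing a smooth equivariant model whose quotient has \emph{exactly} the homotopy type of the suspension --- rather than merely the correct reduced (co)homology guaranteed by the acyclicity range of Theorem~\ref{thmOrbitAcyclicityAnyComplexity} --- is the delicate heart of the argument, and is precisely why the statement is established in the separate work~\cite{AyzCher}.
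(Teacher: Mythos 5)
You should first note what the paper actually does with this statement: it does not prove it. Proposition~\ref{propAyzCherep} is quoted verbatim as \cite[Thm.2]{AyzCher}, and the only in-paper content is the remark following it, which says the example is a \emph{single} explicit manifold --- a $\CP^1$-bundle over the permutohedral variety, with the torus action induced from the action on the base --- so that $X$ is a smooth projective toric variety and $H^{\odd}(X)=0$ is immediate, while $L$ enters through the choice of the bundle, i.e.\ through prescribed degeneracies of the tangent weights over the fixed points. This makes your closing move circular: deferring ``the delicate heart of the argument'' to the separate work \cite{AyzCher} is deferring the entire statement, since the statement being proved \emph{is} Theorem~2 of that paper. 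A blind proof had to supply exactly the construction you set aside.

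Beyond the circularity there is a concrete mathematical error: genericity is the wrong mechanism, indeed it points in the opposite direction. If you choose the weights of the linear model and the characteristic data ``sufficiently generically,'' the action lands in general position; since $H^{\odd}(X)=0$ makes it equivariantly formal, Theorem~\ref{thmEqFormImpliesSphere} then forces $Q$ to be a homology $(n+1)$-sphere, which is incompatible with $Q\simeq\Sigma^{j+2}L$ for any $L$ that is not itself a homology sphere of the right dimension. Quantitatively, $\Hr^{j+2}(Q)\cong\Hr^0(L)$, while $m$-generality gives $(m+1)$-acyclicity of $Q$ by Theorem~\ref{thmOrbitAcyclicityAnyComplexity}; so realizing an arbitrary $L$ requires the action to be \emph{exactly} $j$-general, with controlled $\Qo$-linear dependencies among $(j+1)$-tuples of tangent weights whose pattern encodes $L$. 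Your proposal never explains how the combinatorics of $L$ is converted into such weight dependencies --- that conversion is the entire content of the result, and in \cite{AyzCher} it is carried out through the bundle data over the permutohedral variety, not through gluing. Two further steps in your plan would also fail as stated: equivariant connected sum at fixed points induces connected sum of the orbit spaces, and since the quotients of your building blocks are spheres you never leave the class of spheres, so $|L|$ cannot appear this way; and the Whitehead-theorem finish presupposes an actual map between $Q$ and $\Sigma^{j+2}L$ inducing the homology comparison, which a Mayer--Vietoris bookkeeping of two decompositions does not by itself produce.
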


An example can be constructed as a certain $\CP^1$-bundle over the permutohedral variety. The torus action on this bundle is induced by the torus action on the permutohedral variety in the base. This manifold is a smooth projective toric variety, hence the action is Hamiltonian and cohomologically equivariantly formal.

\section{A criterion of equivariant formality in complexity one: case $n=2$}\label{secSphereImpliesFormalitySimple}

Our next goal is to formulate and prove the theorem converse to Theorem~\ref{thmEqFormImpliesSphere} that is the criterion for equivariant formality of torus actions of complexity one in general position in terms of the orbit space structure. In this section we discuss the case $n=2$, i.e. the $T^1$-action on 4-dimensional manifolds. This case is simpler but reflects some of the main ideas of the general case. The general theorem is stated and proved in Section~\ref{secSphereImpliesFormality}.

\begin{thm}\label{thmSphereImpliesEqFormSimple}
Let the coefficient ring $R$ be either $\Zo$ or $\Qo$. Assume that an effective smooth action of $T^1$ on a closed orientable manifold $X=X^4$ satisfies the following properties:
\begin{enumerate}
  \item the action has nonempty finite set $X_0$ of fixed points;
  \item the action is semifree (that is, the action is free on the complement $X\setminus X_0$);
  \item the orbit space $Q=Q^3=X^4/T^1$ is a homology $3$-sphere: $\Hr_i(Q)=0$ for $i=0,1,2$, and $H_3(Q)\cong R$.
\end{enumerate}
Then the action is equivariantly formal: $H^{\odd}(X)=0$.
\end{thm}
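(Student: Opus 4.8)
The plan is to compute the ordinary cohomology of $X$ directly and show $H^1(X)=H^3(X)=0$; since $X$ is a closed $4$-manifold this is precisely $H^{\odd}(X)=0$, which by \cite[Lm.2.1]{MasPan} is equivalent to equivariant formality. First I would exploit semifreeness. On the complement $V:=X\setminus X_0$ the action is free, so $p\colon V\to \bar Q:=Q\setminus Q_0$ is a principal $T^1$-bundle; around each fixed point $x_i$ ($i=1,\dots,m$) an invariant ball $B_i\cong\Co^2$ carries a linear semifree representation, whose weights are $\pm1$, so $B_i/T^1\cong\Ro^3$ and $Q$ is a manifold near $X_0$. I would then write $X=U\cup V$ with $U=\bigsqcup_i B_i$ equivariantly contractible to $X_0$, so that $U\cap V=\bigsqcup_i(B_i\setminus\{x_i\})\simeq\bigsqcup_i S^3$, and feed this decomposition into a Mayer--Vietoris sequence.

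The first computation is $\Hr^*(\bar Q)$. Since $Q$ is a homology $3$-sphere and $Q_0$ is $m$ manifold points, the long exact sequence of $(Q,\bar Q)$ together with excision gives that $\Hr^*(\bar Q)$ is concentrated in degree $2$: namely $H^0(\bar Q)\cong R$, $H^2(\bar Q)\cong R^{m-1}$, and $H^i(\bar Q)=0$ otherwise, with the $m$ linking spheres $[\sigma_i]\in H_2(\bar Q)$ spanning $H_2(\bar Q)$ subject to a single relation. Next I would run the Gysin sequence of $p\colon V\to\bar Q$,
\[
\cdots\to H^{k-2}(\bar Q)\xrightarrow{\,\cup e\,}H^k(\bar Q)\xrightarrow{\,p^*\,}H^k(V)\to H^{k-1}(\bar Q)\xrightarrow{\,\cup e\,}H^{k+1}(\bar Q)\to\cdots,
\]
where $e\in H^2(\bar Q)$ is the Euler class. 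Semifreeness enters decisively here: the restriction of $p$ to the linking sphere of $x_i$ is the (anti-)Hopf bundle $S^3\to S^2$, so $\langle e,[\sigma_i]\rangle=\pm1$ and in particular $e\neq0$. The sequence then yields $H^0(V)\cong R$, $H^1(V)=0$, $H^2(V)\cong R^{m-2}$, $H^3(V)\cong H^2(\bar Q)\cong R^{m-1}$, and $H^{\geq4}(V)=0$.

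Finally I would assemble $X=U\cup V$. In degree $1$ the map $H^0(U)\oplus H^0(V)\to H^0(U\cap V)$ is surjective while $H^1(U)=H^1(V)=0$, forcing $H^1(X)=0$. In degree $3$ the sequence reduces to $H^3(X)\cong\ker\big(\psi_3\colon H^3(V)\to H^3(U\cap V)\big)$; under the Gysin identification $H^3(V)\cong H^2(\bar Q)$ and integration over the fibre $H^3(S^3_i)\cong H^2(S^2_i)$, the map $\psi_3$ becomes $\omega\mapsto\big(\langle\omega,[\sigma_i]\rangle\big)_i$, which is injective because the $[\sigma_i]$ span $H_2(\bar Q)$. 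Hence $H^3(X)=0$, and together with $H^1(X)=0$ we obtain $H^{\odd}(X)=0$. The main obstacle is exactly this nondegeneracy of the Euler class encoded in the injectivity of $\psi_3$: it is the one place where the hypothesis that the action is semifree (local Euler numbers $\pm1$, so that $Q$ is a manifold near $X_0$ and the linking pairing is unimodular) cannot be dropped. The same point reappears in the equivariant reformulation through the pair $(X,X_0)$: using $H^*_T(X,X_0)\cong H^*(Q,Q_0)$ and $H^*_T(X_0)\cong\bigoplus_i H^*(BT)$, equivariant formality is equivalent to surjectivity of the connecting homomorphisms out of $H^*_T(X_0)$, and by $H^*(BT)$-linearity these reduce to the degree-zero connecting map and multiplication by $u\in H^2(BT)$, i.e. to precisely the same Euler class computation.
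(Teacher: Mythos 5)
Your proof is correct, but it takes a genuinely different route from the paper's. The paper never computes $H^*(X)$ directly: it proves exactness of the truncated ABFP-sequence $0\to H^*_{T^1}(Z)\to H^{*+1}_{T^1}(X,Z)\to 0$, obtaining surjectivity of $\delta_0$ in degree $2$ by collapsing the complement of an invariant ball around a fixed point to produce an equivariantly formal $T^1$-sphere $S^4_x$ and comparing ABFP-sequences via a commutative diagram, and then invokes \cite[Thm.1.1]{FP} to conclude formality; that formulation is chosen deliberately, since it is the $n=2$ template for the cosheaf-of-local-ABFP-sequences argument of Section~\ref{secSphereImpliesFormality}. You instead work entirely in ordinary cohomology (Mayer--Vietoris for $X=U\cup V$ plus the Gysin sequence of the free part over $\bar Q=Q\setminus Q_0$), which is more elementary and self-contained --- no ABFP machinery, no Franz--Puppe --- but is specific to dimension $4$ and does not generalize along the paper's lines. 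Pleasingly, the decisive geometric input is identical in both proofs: semifreeness forces local weights $\pm1$, so each linking $2$-sphere carries Euler number $\pm1$, and your injectivity of $\psi_3$ is precisely the paper's surjectivity of $H^2_{T^1}(\tilde{Z})\to H^3_{T^1}(S^4_x,\tilde{Z})$, as you observe yourself in your closing paragraph. Two small points to patch in a final write-up over $\Zo$: injectivity of $\psi_3$ needs not only that the classes $[\sigma_i]$ generate $H_2(\bar Q)$ but also that the evaluation map $H^2(\bar Q)\to\Hom(H_2(\bar Q),\Zo)$ is injective; this holds because $H_1(\bar Q)\cong H_1(Q)=0$ (the universal coefficient theorem applied to the cohomological homology-sphere hypothesis kills the $\mathrm{Ext}$-term), but it should be said. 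Also, your formulas $H^1(V)=0$ and $H^2(V)\cong R^{m-2}$ implicitly assume $m\geqslant 2$; for $m=1$ your own displayed facts already give a contradiction ($[\sigma_1]=0$ in $H_2(\bar Q)$ while $\langle e,[\sigma_1]\rangle=\pm1$), so the hypotheses are vacuous in that case, but this degenerate case deserves an explicit sentence.
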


It should be noted that circle actions on 4-folds are a classical subject in algebraic topology~\cite{ChL,Fint}, in particular, the relation between simple connectedness of $X^4$ and simple connectedness of the orbit 3-fold $Q^3=X^4/T^1$ as well as the classification of $T^1$-manifolds of dimension $4$ in terms of their orbit spaces was studied in detail by Fintushel~\cite{Fint} (also see references therein). Theorem~\ref{thmSphereImpliesEqFormSimple} is a homological version of his result. We suppose that the reasoning below is not the easiest way to prove the statement, however it demonstrates the key ideas to be used in the proof of Theorem~\ref{thmSphereImpliesEqForm} below, which tackles the case of general~$n$.


\begin{proof}[Proof of Theorem~\ref{thmSphereImpliesEqFormSimple}]
By assumption, we have a $T^1$-action on a $4$-manifold $X^4$, the orbit space $Q^3$ is a homology 3-sphere and there is a nonempty finite set $Z=Q_0$ of fixed points. The truncated ABFP-sequence has the form
\begin{equation}\label{eqN2truncABFP}
0\to H^*_{T^1}(Z)\to H^{*+1}_{T^1}(X^4,Z)\to 0,
\end{equation}
or, equivalently,
\begin{equation}\label{eqABcase4}
0\to \bigoplus_{x\in Z}H^*(BT^1)\stackrel{\delta_0}{\to} H^{*+1}(Q,Z)\to 0.
\end{equation}
Since $Q$ is a homology 3-sphere, and $Z$ is a finite set, the group $H^k(Q,Z)$ is nontrivial only for $k=1$ and $3$. The isomorphisms $H^3(Q,Z)\cong\Zo$, $H^1(Q,Z)\cong \Hr^0(Z)$ follow from the long exact sequence of the pair $(Q,Z)$. The homomorphism $\delta_0\colon \bigoplus_{x\in Z}H^0(BT^1)\to H^1(Q,Z)$ can be identified with the natural homomorphism $H^0(Z)\to \Hr^0(Z)$ which is surjective.

The homomorphism $\delta_0\colon \bigoplus_{x\in Z}H^2(BT^1)\to H^3(Q,Z)\cong \Zo$ is surjective as well. Indeed, let $x\in Z$ be a fixed point, $U_x\subset Q$ be a small disk neighborhood of $x$ in $Q$, and $W_x$ be the preimage of $U_x$ in $X^4$, this is a small disk neighborhood of $x$ in $X^4$. By collapsing the subset $X\setminus W_x$, we get a 4-sphere $S^4_x=X/(X\setminus W_x)$ with the induced action of $T^1$.
The orbit space $S^4_x/T^1$ is homeomorphic to $Q/(Q\setminus U_x)\cong S^3$. The $T^1$-action on $S^4_x$ has a fixed point set $\tilde{Z}$ consisting of two points, $\tilde{Z}=\{x,\infty\}$. Since the $T^1$-action on $S^4_x$ is equivariantly formal, the ABFP-sequence for $S^4_x$ is exact. Hence the map $H^2_{T^1}(\tilde{Z})\to H^3_{T^1}(S^4_x,\tilde{Z})$ is surjective. We have the following commutative diagram
\[
\xymatrix{
H^2_{T^1}(\tilde{Z},\infty)\ar@{->>}[r] \ar@{^{(}->}[d] & H^3_{T^1}(S^4_x,\tilde{Z})\ar@{->}[d]^{\cong}\\
H^2_{T^1}(Z)\ar@{->}[r] & H^3_{T^1}(X,Z)
}
\]
where the vertical arrows are induced by collapsing $X\setminus W_x$ and passing to cohomology relative to the point $\infty$ (which is the class of the collapsed subset $X\setminus W_x$). The right vertical arrow is an isomorphism since
\[
H^3_{T^1}(X,Z)\cong H^3(Q,Z)\cong \Zo\cong H^3(S^3,\tilde{Z})\cong H^3_{T^1}(S^4_x,\tilde{Z}).
\]
The left vertical arrow is the inclusion of the summand $H^2_{T^1}(\tilde{Z},\infty)\cong H^2(BT^1)$ corresponding to $x\in Z$ into $H^2_{T^1}(Z)\cong \bigoplus_{y\in Z}H^2(BT^1)$. It follows that the lower horizontal map is an epimorphism.

The long exact sequence of equivariant cohomology of the pair $(X,Z)$ splits into short exact sequences since all homomorphisms $\delta_0\colon H^*_T(Z)\to H^{*+1}_T(X,Z)$ are surjective. Therefore the following ABFP-sequence is exact
\[
0\to H^*_{T^1}(X^4)\to H^*_{T^1}(Z)\to H^{*+1}_{T^1}(X^4,Z)\to 0.
\]
By the result of Franz--Puppe~\cite[Thm.1.1]{FP}, this condition implies that $X^4$ is equivariantly formal.
\end{proof}

\begin{rem}\label{remCompSupp}
An open disk neighborhood $W_x$ of a fixed point $x$ is chosen to localize the study of cohomology in the vicinity of $x$. If we denote the closure of $W_x$ by $\overline{W}_x$ and its boundary by $\dd W_x$, the sphere $S_x^4$ in the arguments above becomes the quotient $\overline{W}_x/\dd W_x$, the one-point compactification of $W_x$. The relative cohomology module $H^*(\overline{W}_x,\dd W_x)\cong \Hr^*_T(S_x)$ is naturally isomorphic to the equivariant cohomology with compact supports $H^*_{T,c}(W_x)$. Similarly, the relative cohomology $H^*_T(S_x^4,\tilde{Z})$ can be replaced with its compactly supported version $H^*_{T,c}(W_x,W_x\cap Z)$ of the neighborhood $W_x$ itself.

In the proof of Theorem~\ref{thmSphereImpliesEqFormSimple} and in the arguments to follow, there is no actual need to take one-point compactifications of the neighborhoods of points. Similar arguments work fairly well for cohomology with compact supports. All results concerning acyclicity of ABFP sequence are valid for cohomology with compact supports, according to~\cite[Sec.4.1]{AFP}. However, we prefer to work with spheres, the compactifications of neighborhoods, for the reason that cohomology with compact support seems less geometrically intuitive to us than relative cohomology of finite CW-pairs.
\end{rem}

\section{Topology of sponges}\label{secSpongesHomology}

In order to prove the analogue of Theorem~\ref{thmSphereImpliesEqFormSimple} for actions of $T^{n-1}$ on $X^{2n}$ in general position, we need a deeper insight into the structure of orbit type filtrations of such actions. The general theory of sponges was developed in \cite{AyzCompl}. In this section we recall the basic definitions and examples, and prove a collection of technical homological lemmas.

\begin{con}\label{conModelSponge}
Let $v_1,\ldots,v_{n-1}$ be a basis of the vector space $\Ro^{n-1}$ and $v_n=-\sum_{i=1}^{n-1}v_i$. Consider the subset $C^{n-2}$ of
$\Ro^{n-1}$ given by
\[
C^{n-2}=\bigcup_{I\subset [n],|I|=n-2} \Cone(v_i\mid i\in I).
\]
The subset $C^{n-2}$ is the $(n-2)$-skeleton of the real simplicial fan corresponding to the toric variety $\CP^{n-1}$. Schematic figures of $C^{n-2}$ can be found in~\cite{AyzCompl}. The subset $C^{n-2}$ comes equipped with the filtration $C_0\subset\cdots\subset C_{n-2}=C^{n-2}$, where $C_k$ is the union of $k$-dimensional cones of the fan $C^{n-2}$. A point $x\in C^{n-2}\subset \Ro^{n-1}$ is said \emph{to have type $k$} if $C^{n-2}$ cuts a small disc $U_x\subset\Ro^{n-1}$ around $x$ into $n-k$ disjoint chambers. The filtration term $C_k$ consists of all points of type $\leqslant k$.
\end{con}
%

\begin{lem}\label{lemLocHomOfC}
Let $x\in C^{n-2}$ be a point of type $k$. Then the local cohomology group $H^j(C^{n-2},C^{n-2}\setminus \{x\})$ vanishes for $j\neq n-2$ and $H^{n-2}(C^{n-2},C^{n-2}\setminus \{x\})\cong \Zo^{n-1-k}$.
\end{lem}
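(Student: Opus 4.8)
The plan is to reduce the computation to a purely local model at $x$ and then to a homotopy computation for a skeleton of a simplex.

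First I would identify the germ of $C^{n-2}$ at a point $x$ of type $k$. Since $C^{n-2}$ is the $(n-2)$-skeleton of the fan of $\CP^{n-1}$, the point $x$ lies in the relative interior of a unique cone $\sigma_J=\Cone(v_i\mid i\in J)$, and I claim that its type equals $\dim\sigma_J=|J|=:k$. The vectors $v_1,\dots,v_n$ satisfy the single relation $\sum_i v_i=0$, and any $n-1$ of them are linearly independent; hence $\{v_i\}_{i\in J}$ are independent, and projecting $\Ro^{n-1}$ along their span sends the remaining $n-k$ rays to vectors $\bar v_i$ ($i\notin J$) in $\Ro^{m}$, $m=n-1-k$, again satisfying one relation $\sum_{i\notin J}\bar v_i=0$ with every proper subset independent. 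This is precisely the ray configuration of $\CP^{m}$. The cones of $C^{n-2}$ containing $\sigma_J$ project to the codimension-one cones of this fan; writing $L$ for the link of the origin in the projected skeleton, namely the $(m-2)$-skeleton of $\partial\Delta^{m}$, the space $C^{n-2}$ is transversally to $\sigma_J$ the open cone over $L$. Consequently the germ of $(C^{n-2},x)$ is $\Ro^k\times(\text{open cone over }L)$, and the number of local chambers equals the number $m+1=n-k$ of maximal cones of the $\CP^m$-fan, confirming that $x$ has type $k$.

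Next I would turn this local model into cohomology. A product of two open cones is the open cone over the join of their links, so near $x$ the space $C^{n-2}$ is the open cone over $S^{k-1}\ast L$ with apex $x$. Since a cone is contractible, the long exact sequence of the pair (or excision onto a small neighbourhood) gives
\[
H^j(C^{n-2},C^{n-2}\setminus\{x\})\cong \Hr^{\,j-1}(S^{k-1}\ast L).
\]
Using $S^{k-1}\ast L\simeq \Sigma^{k}L$ and the suspension isomorphism, the right-hand side is $\Hr^{\,j-1-k}(L)$. Finally I would compute $\Hr^{*}(L)$: the space $L$ is the $(m-2)$-skeleton of the $m$-simplex on $m+1$ vertices, which is homotopy equivalent to a wedge of $\binom{m}{m-1}=m$ copies of $S^{m-2}$, so $\Hr^{m-2}(L)\cong\Zo^{m}$ and all other reduced groups vanish. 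Substituting, $H^j(C^{n-2},C^{n-2}\setminus\{x\})$ is nonzero exactly when $j-1-k=m-2$, i.e. $j=n-2$, in which degree it equals $\Zo^{m}=\Zo^{\,n-1-k}$, as claimed. The degenerate cases $k=n-2$ (where $L=\varnothing$ and $S^{k-1}\ast\varnothing=S^{k-1}$) and $k=0$ (where $x$ is the apex) can be checked to fit this formula.

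The main obstacle is the first step: rigorously establishing the local product-and-cone structure and, in particular, proving that the link of $\sigma_J$ in $C^{n-2}$ is the codimension-one skeleton of the fan of $\CP^{n-1-k}$, equivalently the $(m-2)$-skeleton of $\partial\Delta^m$, together with the bookkeeping that identifies the combinatorial type of $x$ with the dimension of its carrying cone. Once the local model is pinned down, the remaining steps — the product of cones, the join identity $S^{k-1}\ast L\simeq\Sigma^kL$, and the homology of a simplex skeleton — are standard.
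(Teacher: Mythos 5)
Your proof is correct and takes essentially the same route as the paper's: near a type-$k$ point the space is $\Ro^k$ times the cone on the model $C^{n-k-2}$ (in your formulation, the cone over $S^{k-1}\ast L$), and the cone and suspension isomorphisms reduce everything to the reduced cohomology of the $(m-2)$-skeleton of an $m$-simplex with $m=n-1-k$, a wedge of $m$ copies of $S^{m-2}$. You simply spell out the two points the paper leaves implicit --- the local product-and-cone structure via projection along $\sigma_J$, and the ``simple exercise'' on the simplex skeleton --- so nothing more is needed.
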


\begin{proof}
If $x$ has type $0$, that is $x$ is the origin of $\Ro^{n-1}$, then $H^*(C^{n-2},C^{n-2}\setminus \{x\})\cong H^*(\Cone \Delta_{n-1}^{(n-3)},\Delta_{n-1}^{(n-3)})\cong \Hr^{*-1}(\Delta_{n-1}^{(n-3)})$, where $\Delta_{n-1}^{(n-3)}$ is the $(n-3)$-skeleton of an $(n-1)$-dimensional simplex. In this case, the computation of cohomology is a simple exercise. In general, if $x$ has type $k$, then $x$ has a neighborhood homeomorphic to $\Ro^k\times C^{n-1-k}$ and the statement follows from the type $0$ case and the suspension isomorphism.
\end{proof}

Let us recall a notion of the sponge, introduced in~\cite{AyzCompl}. This notion models the structure of orbit type filtration for torus actions of complexity one in general position.

\begin{defin}
Let $Q=Q^{n+1}$ be a closed topological manifold and $Z\subset Q$ its subspace. A pair $(Q^{n+1},Z^{n-2})$ is called \emph{a sponge} if, for any point $x\in Z$, there is a neighborhood $U_x\subset Q^{n+1}$ such that $(U_x,U_x\cap Z^{n-2})$ is homeomorphic to $(V\times \Ro^2,V\cap C^{n-2})$, where $V$ is an open subset of the space $\Ro^{n-1}$ and $C^{n-2}$ is the model space defined in Construction~\ref{conModelSponge}.
\end{defin}

Sometimes the space $Z^{n-2}$ itself will be called a sponge. The filtration $\{C_k\}$ on $C^{n-2}$ naturally induces the filtration $Z_0\subset Z_1\subset\cdots\subset Z_{n-2}$ of $Z^{n-2}$. A point $x\in Z$ is said \emph{to have type $k$} if it lies in $Z_k\setminus Z_{k-1}$. The type of a point is well-defined by Lemma~\ref{lemLocHomOfC}, therefore the whole filtration $\{Z_k\}$ is well-defined. The closures of connected components of $Z_k\setminus Z_{k-1}$ are called \emph{(proper) $k$-faces} of the sponge. We recall from \cite[Prop.2.16]{AyzCompl}, that whenever an action of $T^{n-1}$ on $X^{2n}$ is in general position and satisfies~\eqref{eqCondJoint}, the pair $(Q,Q_{n-2})$ is a sponge. Here, as before, $Q=X^{2n}/T^{n-1}$ is the orbit space, and $Q_{n-2}$ is its orbit $(n-2)$-skeleton. The notion of faces for the orbit type filtration and that for a sponge are consistent.

\begin{ex}\label{exExamplesSponges}
We have the following natural examples of sponges.
\begin{enumerate}
  \item Assume there is a locally standard action of $T^n$ on a manifold $M^{2n}$, so the orbit space $P=P^n=M^{2n}/T^n$ is a manifold with corners. Assume that the induced action of a subtorus $T^{n-1}\subset T^n$ on $M^{2n}$ has isolated fixed points, and it is in general position. Then the sponge of $T^{n-1}$-action on $M^{2n}$ is an $(n-2)$-skeleton of $P$, see~\cite{AyzCompl}. In particular, it was proved in~\cite{AyzCompl}, that whenever $M^{2n}$ is a quasitoric manifold, its orbit space $M^{2n}/T^{n-1}$ is homeomorphic to the sphere $S^{n+1}$, and the sponge is the $(n-2)$-skeleton of the orbit polytope (that is the boundary of a simple polytope minus the interiors of all facets).
  \item The following actions were mentioned in the introduction: the $T^3$-action on the Grassmann manifold $G_{4,2}$ of complex $2$-planes in $\Co^4$, the $T^2$-action on the manifold $F_3$ of full complex flags in $\Co^3$, and the $T^3$-action on the quaternionic projective plane $\HP^2$. Their sponges are shown on Fig.~\ref{figSpongeExamples}. The sponges for $G_{4,2}$ and $F_3$ were described in~\cite{AyzCompl}, while the sponge for $\HP^2$ was described in detail in~\cite{AyzHP}.
\end{enumerate}
\end{ex}

\begin{figure}[h]
\begin{center}
\includegraphics[scale=0.22]{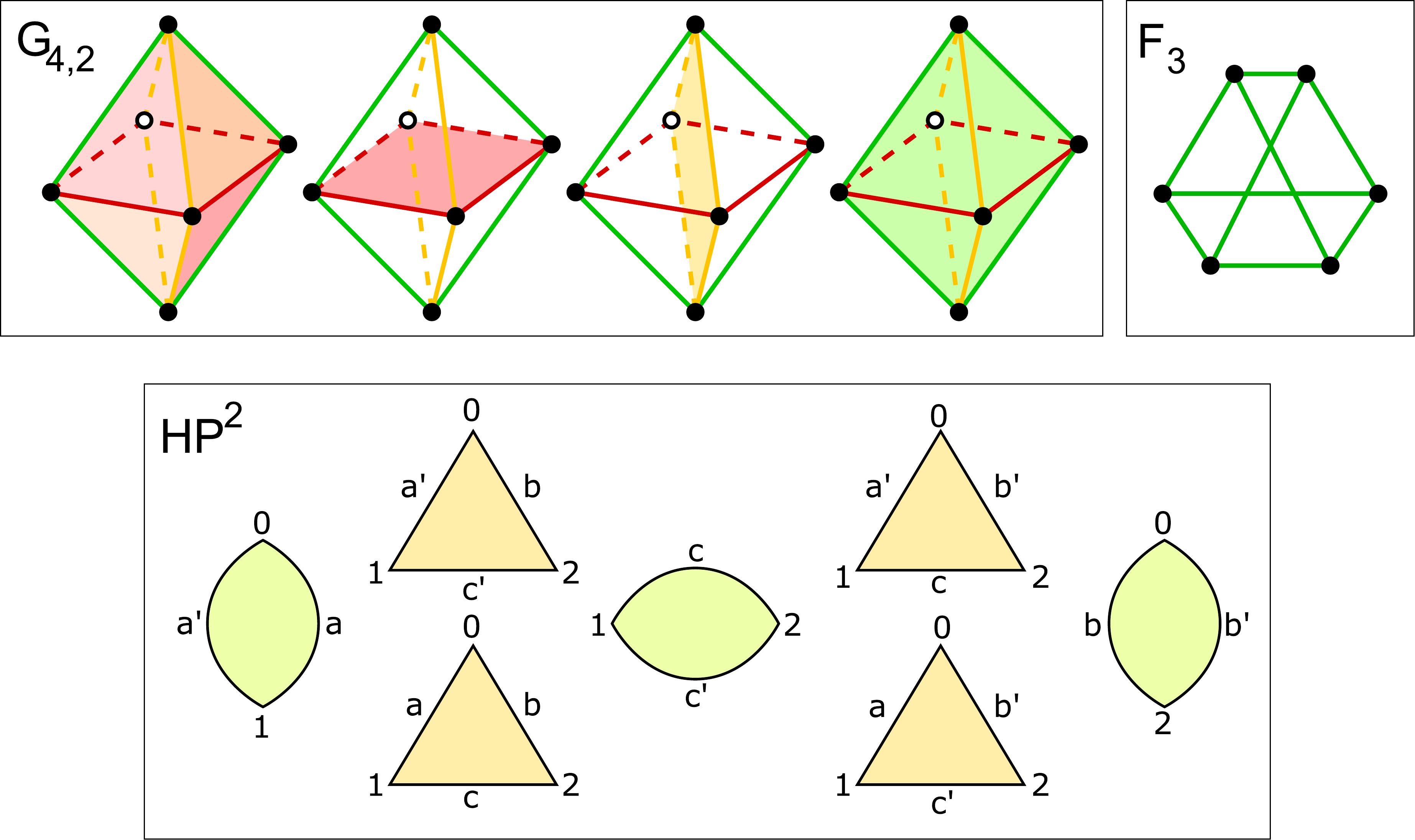}
\end{center}
\caption{The sponges of complexity one actions on $G_{4,2}$, $F_3$, and $\HP^2$. The sponge of $G_{4,2}$ is the boundary of octahedron with 3 additional square faces attached along equatorial circles. The sponge of $F_3$ is the complete bipartite graph $K_{3,3}$ (this is the GKM-graph of $F_3$). The sponge of $\HP^2$ has 7 2-dimensional faces glued together according to the labels. These figures appeared in the works~\cite{AyzCompl,AyzHP} of the first author.}\label{figSpongeExamples}
\end{figure}

In view of Theorem~\ref{thmEqFormImpliesSphere}, it is natural to introduce the following definition.

\begin{defin}\label{definAcyclicSponge}
A sponge $(Q^{n+1},Z^{n-2})$ is called \emph{acyclic} if the following conditions hold: (1) $Q^{n+1}$ is a homology $(n+1)$-sphere; (2) each face of $Z^{n-2}$ is acyclic; (3) the space $Z^{n-2}$ is $(n-3)$-acyclic.
\end{defin}

In this section we work with coefficients in $\Zo$. However, Definition~\ref{definAcyclicSponge} makes sense for coefficients in any field $\ko$ as well.

\begin{rem}
If we are given just the space $Z^{n-2}$ without specifying the ambient manifold $Q^{n+1}$, then we call a sponge $Z$ acyclic, if conditions (2) and (3) above hold.
\end{rem}

For a sponge $Z^{n-2}$, consider the poset $S_Z$ of proper faces of $Z^{n-2}$, ordered by inclusion. In general, if $S$ is a poset, we use the notation
\[
S_{\leqslant s}=\{t\in S\mid t\leqslant s\},\qquad S_{\geqslant s}=\{t\in S\mid t\geqslant s\},
\]
for $s\in S$. The posets $S_{<s}$, $S_{>s}$, $S_{(s_1,s_2)}$, etc. are defined similarly. For example $S_{(s_1,s_2)}=\{t\in S\mid s_1<t<s_2\}$.

\begin{rem}\label{remDualSimplicial}
A poset $S$ is called \emph{dually simplicial}, if it has the unique greatest element, and, for any $s\in S$, the subposet $S_{\geqslant s}$ is isomorphic to the boolean lattice. For an action of complexity 0 of $T=T^n$ on $M=M^{2n}$, the orbit space $P$ is a nice manifold with corners (in the terminology of~\cite{MasPan}). The poset $S_P$ of faces of $P$ (which coincides with the poset of face submanifolds of $M$) is dually simplicial. This follows from the fact that the poset of faces of the nonnegative cone $\Rg^n$ is isomorphic to the boolean lattice.
\end{rem}

To formulate several next results we need to recall the notions from the combinatorial topology and algebraic combinatorics of posets.

\begin{con}
A simplicial complex $K$ on a (finite) vertex set $V$ is a collection of subsets of $V$, such that $\varnothing\in K$ and $I\in K$, $J\subset I$ implies $J\in K$. Let $|K|$ denote the geometrical realization of $K$, this is a finite CW-complex corresponding to $K$. One can speak about topological characteristics of $K$ via the geometrical realization. For example, $K$ is called acyclic, if the space $|K|$ is acyclic. If $I\in K$ is a simplex, the simplicial complex $\link_KI=\{J\subset V\mid J\cap I=\varnothing, J\cup I\in K\}$ is called \emph{the link} of $I$ in $K$. In particular, $\link_K\varnothing=K$. A simplicial complex $K$ (of dimension $d$) is called \emph{Cohen--Macaulay}, if the following conditions hold: (1) $K$ is $(d-1)$-acyclic, (2) for any simplex $I\in K$, the complex $\link_KI$ is $(d-1-|I|)$-acyclic. The definition depends on the ground ring of coefficients.

Let $S$ be a finite poset. Consider the simplicial complex $\ord(S)$ called \emph{the order complex of $S$} is defined as follows. The vertex set of $\ord(S)$ is $S$. Simplices of $\ord(S)$ are the subsets of pairwise comparable elements of $S$. In other words, each chain $s_0<s_1<\cdots<s_k$ in $S$ is a $k$-dimensional simplex $\sigma=\{s_0,\ldots,s_k\}$ in $\ord(S)$. Properties of simplicial complexes can be transferred to finite posets via the construction of the order complex. In particular, the geometrical realization $|S|$ of a poset $S$ is the geometrical realization of its order complex. A poset $S$ is called \emph{Cohen--Macaulay}, if $\ord(S)$ is Cohen--Macaulay.
\end{con}

\begin{lem}\label{lemSZCohenMacaulay}
If $Z^{n-2}$ is an acyclic sponge, then the geometrical realization $|S_Z|$ is $(n-3)$-acyclic. Moreover, $S_Z$ is a Cohen--Macaulay poset.
\end{lem}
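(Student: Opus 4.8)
The plan is to treat the two assertions in sequence: first deduce the $(n-3)$-acyclicity of $|S_Z|$ from a homological comparison with the sponge $Z^{n-2}$, and then upgrade this to the Cohen--Macaulay property by a link-by-link analysis of the order complex. Throughout I use only conditions (2) and (3) of Definition~\ref{definAcyclicSponge}, so the ambient manifold plays no role (consistent with the Remark preceding the lemma).

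\noindent\textbf{Step 1 (acyclicity of $|S_Z|$).} I would establish a homology isomorphism $\Hr^*(|S_Z|)\cong\Hr^*(Z^{n-2})$. The point is that, by condition (2), every closed face $\overline{F}$ is acyclic, so $Z^{n-2}$ is a homological regular cell complex with face poset exactly $S_Z$. The closed faces $\{\overline{F}\}_{F\in S_Z}$ form a covering of $Z^{n-2}$ indexed by $S_Z$, and, crucially, in the local model $C^{n-2}$ of Construction~\ref{conModelSponge} the intersection of two closed faces is again a single closed (hence acyclic) face. A homological nerve argument, i.e.\ the spectral sequence of the homotopy colimit of the diagram $F\mapsto\overline{F}$ over $S_Z$, then gives $\Hr_*(Z^{n-2})\cong\Hr_*(\operatorname{hocolim})\cong\Hr_*(|S_Z|)$, since the realization of the poset is $|S_Z|$ and all the values of the diagram are acyclic. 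Granting this, condition (3), that $Z^{n-2}$ is $(n-3)$-acyclic, transfers verbatim to $|S_Z|$.

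\noindent\textbf{Step 2 (Cohen--Macaulayness).} Here I would invoke the standard reformulation: $S_Z$ is Cohen--Macaulay over $R$ if and only if, in the augmented poset $\wh{S_Z}$, the order complex of every open interval $(s,t)$ has reduced homology concentrated in its top degree (this follows from Reisner's criterion after splitting each link of $|S_Z|$ into a join of intervals). The interval $(\hat 0,\hat 1)$ is $|S_Z|$ itself, and concentration below the top degree $n-2$ is precisely the $(n-3)$-acyclicity obtained in Step 1. The remaining intervals fall into three families, each identified via the local model of Construction~\ref{conModelSponge}: a lower interval $(S_Z)_{<x}$ for a $k$-face $x$ is the face poset of $\partial x$, which is a homology $(k-1)$-sphere because $(\overline{x},\partial x)$ is a homological cell; an open interval $(x,y)$ between a $k$-face and an $\ell$-face is the face poset of the boundary of an $(\ell-k-1)$-simplex, hence a homology $(\ell-k-2)$-sphere; and an upper interval $(S_Z)_{>x}$ is, through the transverse model $\Ro^{k}\times C^{\,n-2-k}$ at a relatively interior point of $x$, the face poset of the $(n-3-k)$-skeleton of a simplex, whose realization is a wedge of $(n-3-k)$-spheres. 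In all three cases the reduced homology sits in the top degree, which is exactly what the criterion demands.

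\noindent\textbf{Main obstacle.} The hard part is the rigorous identification of the open and upper intervals with the pieces of the local model \emph{globally} rather than merely locally: one must verify that, near a relatively interior point of a face $x$, the distinct cone-branches of $C^{\,n-2-k}$ are in faithful bijection with the global faces of $Z^{n-2}$ containing $x$ (and the analogue for $\partial x$ controls the lower intervals). This faithfulness, together with the acyclicity of pairwise intersections of closed faces needed in Step 1, is where I would draw on the detailed structure theory of sponges from~\cite{AyzCompl}. As an independent consistency check it is worth recording that Lemma~\ref{lemLocHomOfC} already shows that all local cohomology of the model is concentrated in degree $n-2$, precisely the top-degree concentration imposed by the Cohen--Macaulay criterion.
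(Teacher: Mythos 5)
Your Step 1 contains a genuine error. The claim that the intersection of two closed faces of $Z^{n-2}$ is a single closed (hence acyclic) face is true in the local model $C^{n-2}$ (where two cones meet in a common subcone) but is false globally, and the paper's own examples refute it: in the sponge of $G_{4,2}$ (Example~\ref{exExamplesSponges}, Fig.~\ref{figSpongeExamples}), two of the three equatorial squares of the octahedron share no edge and meet in exactly one antipodal pair of vertices, so their intersection is two points --- disconnected, not acyclic, and not a face. Sponges are thus not ``regular'' in the polytopal sense, and a nerve-type argument over the closed cover by faces breaks down: the Mayer--Vietoris/\v{C}ech spectral sequence of that cover acquires contributions from the disconnected intersections (and the nerve of the cover is in any case not $|S_Z|$). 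The homotopy-colimit variant you mention in the same breath does not need the intersection condition, but then the entire burden shifts to proving that the homotopy colimit of $F\mapsto\overline{F}$ over $S_Z$ has the homology of $Z$, which is precisely the nontrivial point and which you leave unargued. The paper avoids all of this with a more elementary device: it constructs a filtration-preserving map $f\colon Z^{n-2}\to|S_Z|$ by induction over faces (extensions exist because $|(S_Z)_{\leqslant F}|=\Cone|(S_Z)_{<F}|$ is a cone), and then observes that, since the faces on both sides are acyclic homological cells, $f$ induces an isomorphism of the $E_1$-pages of the two filtration spectral sequences, whence $H^*(|S_Z|)\cong H^*(Z^{n-2})$; the same comparison applied to $\partial F$ shows $|(S_Z)_{<F}|$ is a homology sphere. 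You should replace your Step 1 by this map-and-comparison argument (or supply the missing colimit-versus-homotopy-colimit analysis, which amounts to the same work).

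Your Step 2, by contrast, is essentially the paper's proof. The paper checks Cohen--Macaulayness by decomposing the link of a chain $\sigma=(F_1<\cdots<F_r)$ as the join $|(S_Z)_{<F_1}|\ast|(S_Z)_{(F_1,F_2)}|\ast\cdots\ast|(S_Z)_{>F_r}|$, with the lower and intermediate intervals homology spheres and the upper interval identified, via the local model and Lemma~\ref{lemLocHomOfC}, with (a suspension of) a skeleton of a simplex, whose reduced homology sits in top degree; your formulation via top-degree concentration of all open intervals of the augmented poset is an equivalent bookkeeping of the same joins, and your explicit identification of the open intervals $(x,y)$ with $\partial\Delta^{\ell-k-1}$ is slightly more detailed than the paper's terse ``the same argument shows.'' The ``faithfulness'' issue you flag --- that local cone-branches at a point of $\relint F$ must biject with global faces $G>F$ --- is a real subtlety, and it is equally implicit in the paper's assertion that $(S_Z)_{>F}$ coincides with the corresponding subposet of the model; flagging it and deferring to the structure theory of \cite{AyzCompl} is reasonable. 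So the Cohen--Macaulay half stands once Step 1 is repaired as above.
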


\begin{proof}
The proof essentially repeats the idea of \cite[Prop.5.14]{MasPan}. 
We consider the filtration $\{|S_Z|_k\}$ of $|S_Z|$, where $|S_Z|_k$ is the geometrical realization of the subposet $\{F\in S_Z\mid \dim F\leqslant k\}$. There exists a map $f\colon Z^{n-2}\to |S_Z|$ preserving the filtrations on these spaces. Indeed, the map can be constructed inductively: at each step, we need to extend the given map $f\colon \dd F\to |(S_Z)_{<F}|$ to the map from $F$ to $|(S_Z)_{\leqslant F}|$. Such extension exists since $(F,F_{-1})$ is a CW-pair, and the target space $|(S_Z)_{\leqslant F}|=\Cone |(S_Z)_{<F}|$ is contractible. It is natural to call the subsets $|(S_Z)_{\leqslant F}|$ the faces of $|S_Z|$.

Since both spaces $Z^{n-2}$ and $S_Z$ have acyclic faces, the constructed map $f\colon Z^{n-2}\to |S_Z|$ induces the isomorphism of the (co)homology spectral sequences corresponding to the filtrations on these spaces. Therefore $|S_Z|$ has the same homology as $Z^{n-2}$, in particular it is $(n-3)$-acyclic.

A similar argument shows that $|(S_Z)_{<F}|$ has homology isomorphic to that of $F_{-1}$. Now, by definition of acyclic sponge, $(F,F_{-1})$ is a homology cell, in particular, $F_{-1}=\dd F$ is a homology sphere. Therefore, homology of $|(S_Z)_{<F}|$ is also isomorphic to homology of the sphere of the same dimension.

To prove the Cohen--Macaulay property of $S_Z$, we pick an arbitrary chain $F_0<\cdots<F_r$ of faces of $S_Z$ and consider the link of the simplex $\sigma=(F_0,\ldots,F_r)$ in the order complex $\ord(S_Z)$. We have
\[
\link_{\ord(S_Z)}\sigma \cong |(S_Z)_{<F_0}|\ast|(S_Z)_{(F_0,F_1)}|\ast\cdots\ast|(S_Z)_{(F_{r-1},F_r)}|\ast |(S_Z)_{>F_r}|,
\]
where $\ast$ denotes the topological join. According to the preceding discussion, the space $|(S_Z)_{<F_0}|$ has homology of a sphere of the same dimension. A poset $(S_Z)_{(F_i,F_{i+1})}$ coincides with the boolean lattice with the least and greatest elements removed, according to Remark~\ref{remDualSimplicial}. Therefore, $\ord((S_Z)_{(F_i,F_{i+1})})$ is the barycentric subdivision of the boundary of a simplex, so the geometrical realization $|(S_Z)_{(F_i,F_{i+1})}|$ is homeomorphic to a sphere.

The $(n-2-\dim F_r)$-dimensional space $|(S_Z)_{>F_r}|$ is $(n-3-\dim F_r)$-acyclic according to Lemma~\ref{lemLocHomOfC}. Indeed, the poset structure of the upper ideal $(S_Z)_{>F}$ coincides with the subposet of $C^{n-2}$ which consists of all faces that strictly contain a point $x$ of type $\dim F$. The geometrical realization $|(S_Z)_{>F}|$ is homeomorphic to $\Delta_{n-1-\dim F}^{(n-2-\dim F)}$. Hence $\link \sigma$ is $(n-3-\dim\sigma)$-acyclic.

This proves that the link of any simplex in $|S_Z|$ is acyclic below the dimension of the link. Taking into account that the space $|S_Z|$ itself is $(n-3)$-acyclic, as was proved earlier, we have shown that $S_Z$ is Cohen--Macaulay.
\end{proof}

Let $(Q^{n+1},Z^{n-2})$ be an acyclic sponge. Since $Z^{n-2}$ is a homology cell complex, the incidence numbers for pairs of cells are well defined. Let us choose orientations of all faces $F$ of $Z^{n-2}$ arbitrarily. This means that we choose a generator $o_F$ of the group $H_{\dim F}(F,\dd F)\cong \Zo$ for any $F$. For any pair of faces $F>G$, $\dim F-\dim G=1$, we consider the incidence number $\inc{F}{G}\in\Zo$ determined by the condition $\dd(o_F)=\inc{F}{G}o_G$ for the natural map $\dd\colon H_{\dim F}(F,\dd F)\to H_{\dim G}(G,\dd G)$. For any pair $F>F'$ of faces such that $\dim F-\dim F'=2$, there exist exactly two intermediate faces $F>G_1, G_2>F'$, and the following diamond relation holds:
\begin{equation}\label{eqDiamondRel}
\inc{F}{G_1}\inc{G_1}{F'}+\inc{F}{G_2}\inc{G_2}{F'}=0.
\end{equation}

In general, if $S$ is a graded poset, we use the notation $s>_it$ if $s>t$ and $\rk s-\rk t=i$. Assume that for any $s>_1t$ a number $\inc{s}{t}$ is defined, and, for any $s>_2s'$ there exist exactly two elements $t_1,t_2\in S$ between $s$ and $s'$, and the relation $\inc{s}{t_1}\inc{t_1}{s'}+\inc{s}{t_2}\inc{t_2}{s'}=0$ holds true. In this case we say that \emph{a sign convention} is set on $S$.

\begin{con}\label{conLocalCohomologyCosheaf}
Let $S$ be a poset of dimension $n-2$. We consider $S$ as a small category in a natural way: objects are the elements of $S$, there exists exactly one morphism from $s$ to $t$ if $s\geqslant t$, and no morphisms otherwise.

We consider \emph{the cosheaf $\Hh^*$ of local cohomology} of $S$, following the classical idea of \cite{Zeem}. We set $\Hh^*(s)=H^*(|S|,|S\setminus S_{\geqslant s}|)$ for $s\in S$. If $s>t$, then $S_{\geqslant s}\subset S_{\geqslant t}$, and the inclusion of pairs $(|S|,|S\setminus S_{\geqslant t}|)\to (|S|,|S\setminus S_{\geqslant s}|)$ induces the natural map $\Hh^*(s>t)\colon \Hh^*(s)\to \Hh^*(t)$.

If there is a sign convention on $S$, then we can define homology modules of the cosheaf $\Hh^*$~by
\[
H_i(S;\Hh^p)=H_i(C_*(S;\Hh^p),\dd),\quad C_i(S;\Hh^p)=\bigoplus_{\rk s=i}\Hh^p(s), \quad \dd=\bigoplus_{s>_1t}\inc{s}{t}\Hh^p(s>t).
\]

The standard argument with dihomology complex \cite[Thm.1]{Zeem} provides the spectral sequence
\begin{equation}\label{eqDihomologySequence}
E_2=H_q(S;\Hh^p)\Rightarrow H^{p-q}(|S|).
\end{equation}
\end{con}

In the following, $\relint G$ denotes the relative interior of a subset $G$. In the context of our paper, $G$ is a face of a torus action, on which the action has complexity $0$, hence $G$ is a manifold with corners. In this case $\relint G$ is the subset $G\setminus G_{-1}=G\setminus\dd G$, the topological interior of $G$.

\begin{lem}\label{lemPosetSpongeLocalIso}
Let $Z$ be an acyclic sponge, and $S_Z$ be the corresponding poset of faces. Then, for each face $F\subset Z$, there is a canonical isomorphism $H^*(Z,Z\setminus \bigsqcup_{G\geqslant F}\relint G)\cong H^*(|S_Z|,|S_Z\setminus (S_Z)_{\geqslant F}|)$.
\end{lem}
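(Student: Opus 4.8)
The plan is to leverage the filtration-preserving map $f\colon Z^{n-2}\to |S_Z|$ constructed in the proof of Lemma~\ref{lemSZCohenMacaulay} and to upgrade the absolute cohomology isomorphism established there to the relative one, by comparing the long exact sequences of the two pairs.

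First I would identify the two subspaces in the statement as subcomplexes. A point $x\in Z$ lies in $\relint G_x$ for a unique face $G_x$, so $x\in\bigsqcup_{G\geqslant F}\relint G$ precisely when $G_x\geqslant F$; hence the complement $Z\setminus\bigsqcup_{G\geqslant F}\relint G$ is the union of the closed faces $G$ with $G\not\geqslant F$. Since the set $\{G\in S_Z\mid G\not\geqslant F\}$ is downward closed (if $G'\leqslant G$ and $G\not\geqslant F$, then $G'\not\geqslant F$), this union is a genuine subcomplex $Z_{\not\geqslant F}$ of the homology cell complex $Z$, and it is itself a homology cell complex with the induced faces, all acyclic. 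Likewise $|S_Z\setminus (S_Z)_{\geqslant F}|$ is the geometric realization of the downward-closed subposet $S_Z\setminus (S_Z)_{\geqslant F}$. Because $f$ sends each closed face $G$ of $Z$ into the face $|(S_Z)_{\leqslant G}|$ of $|S_Z|$, and $G\not\geqslant F$ forces $(S_Z)_{\leqslant G}\subset S_Z\setminus (S_Z)_{\geqslant F}$, the map carries $Z_{\not\geqslant F}$ into $|S_Z\setminus (S_Z)_{\geqslant F}|$, and therefore induces a map of pairs
\[
f\colon (Z,\,Z_{\not\geqslant F})\longrightarrow (|S_Z|,\,|S_Z\setminus (S_Z)_{\geqslant F}|).
\]

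Next I would show that both the total map and its restriction are cohomology isomorphisms. For the total spaces this is exactly Lemma~\ref{lemSZCohenMacaulay}. For the subcomplexes I would repeat verbatim the spectral-sequence argument of that lemma: both $Z_{\not\geqslant F}$ and $S_Z\setminus (S_Z)_{\geqslant F}$ are filtered by their skeleta, the associated graded cells $H^{*}(G,\dd G)$ are acyclic, and $f$ matches the indexing faces bijectively, so it induces an isomorphism on the $E_1$-pages of the two skeletal spectral sequences and hence on cohomology. The five lemma applied to the long exact sequences of the two pairs then yields the isomorphism $H^*(Z,Z\setminus\bigsqcup_{G\geqslant F}\relint G)\cong H^*(|S_Z|,|S_Z\setminus (S_Z)_{\geqslant F}|)$. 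It is canonical in the sense required because every arrow in sight is natural with respect to the inclusions of smaller upper ideals, so the resulting isomorphism intertwines the cosheaf restriction maps $\Hh^*(s>t)$ of Construction~\ref{conLocalCohomologyCosheaf}.

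The main obstacle I anticipate is not conceptual but one of bookkeeping: checking that the restriction $f\colon Z_{\not\geqslant F}\to |S_Z\setminus (S_Z)_{\geqslant F}|$ is again filtration-preserving and realizes the correct face-by-face homology equivalence, so that the spectral-sequence comparison genuinely applies to the subcomplex and not only to the ambient space. Once the two subsets are correctly recognized as subcomplexes with acyclic faces, this is a routine reprise of the argument already given for Lemma~\ref{lemSZCohenMacaulay}.
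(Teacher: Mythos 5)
Your proposal is correct and follows essentially the same route as the paper: the paper likewise takes the filtration-preserving map $f$ from Lemma~\ref{lemSZCohenMacaulay}, observes that it carries the closed subset $Z\setminus\bigsqcup_{G\geqslant F}\relint G=\bigcup_{G\ngeqslant F}G$ into $|S_Z\setminus(S_Z)_{\geqslant F}|=\bigcup_{G\ngeqslant F}|(S_Z)_{\leqslant G}|$, and concludes via the induced isomorphism of the skeletal spectral sequences. Your only deviation---comparing the absolute spectral sequences of the two subcomplexes and then invoking the five lemma on the long exact sequences of the pairs, rather than comparing the relative spectral sequences directly---is an equivalent piece of bookkeeping, and your verification that $\{G\mid G\ngeqslant F\}$ is downward closed (so that $\dd G$ stays in the subcomplex) is exactly the point that makes the comparison legitimate.
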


\begin{proof}
The map $f\colon Z\to |S_Z|$ constructed in the proof of Lemma~\ref{lemSZCohenMacaulay} takes the closed subset $Z\setminus \bigsqcup_{G\geqslant F}\relint G=\bigcup_{G\ngeqslant F}G$ to the closed subset $|S_Z\setminus (S_Z)_{\geqslant F}|=\bigcup_{G\ngeqslant F}|(S_Z)_{\leqslant G}|$. This map induces the isomorphism of the corresponding spectral sequences, hence it induces the natural isomorphism in relative cohomology.
\end{proof}

%
%
%
%
%

\begin{lem}\label{lemSpongeLocalIso}
Let $(M,Z)$ be an acyclic sponge. Let $x_F$ be a point lying in the relative interior of a face $F\subset Z$, and $U_{x_F}$ be a sufficiently small disk neighborhood of $x_F$ in $M$. Then there is a canonical isomorphism $H^*(Z,Z\setminus \bigsqcup_{G\geqslant F}\relint G)\cong H^*(Z,Z\setminus U_{x_F})$.
\end{lem}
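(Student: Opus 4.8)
The plan is to read both sides as the local cohomology of $Z$ concentrated near the open cell $\relint F$, and to show that enlarging the support-neighbourhood from a small transverse slice at the single point $x_F$ to the full open star of $F$ leaves the cohomology unchanged, precisely because $\relint F$ is contractible. Set $N_F=\bigsqcup_{G\geqslant F}\relint G$. Its complement $Z\setminus N_F=\bigcup_{G\ngeqslant F}G$ is a closed subcomplex (if $G\ngeqslant F$ and $G'\leqslant G$, then $G'\ngeqslant F$), so $N_F$ is an open neighbourhood of $\relint F$ and the left-hand group is $H^*(Z,Z\setminus N_F)$. On the right, $U_{x_F}\cap Z$ is a neighbourhood of $x_F$ with $U_{x_F}\cap Z\subseteq N_F$, and $Z\setminus U_{x_F}=Z\setminus(U_{x_F}\cap Z)$; since the punctured disk $U_{x_F}\setminus\{x_F\}$ retracts onto its boundary sphere, the inclusion $Z\setminus U_{x_F}\hookrightarrow Z\setminus\{x_F\}$ is a homotopy equivalence and the right-hand group is the point–local cohomology $H^*(Z,Z\setminus\{x_F\})$. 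The inclusion of complements $Z\setminus N_F\hookrightarrow Z\setminus(U_{x_F}\cap Z)$ gives a canonical comparison map between the two relative groups, and the goal is to prove that it is an isomorphism.

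For this I would use the local product structure of the sponge along $\relint F$. Every point of $\relint F$ has type $k=\dim F$, so by the definition of a sponge together with Lemma~\ref{lemLocHomOfC} it has a neighbourhood in $Z$ homeomorphic to $\Ro^k\times C^{n-1-k}$, with the $\Ro^k$–factor tangent to $\relint F$ and the cone $C^{n-1-k}$ transverse. These charts assemble into a tubular neighbourhood of $\relint F$ which is a cone bundle over $\relint F$; as $\relint F\cong\Ro^k$ is contractible, the bundle is trivial, giving a homeomorphism $N\cong\relint F\times C^{n-1-k}$ of a neighbourhood $N\subseteq N_F$ of $\relint F$ under which $\relint F=\relint F\times\{o\}$ (the apex $o$ of the cone), $x_F\mapsto(p_0,o)$, and $U_{x_F}\cap Z\mapsto D\times C_\varepsilon$ for a coordinate ball $D\ni p_0$ in $\relint F$ and a small subcone $C_\varepsilon$.

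The argument then proceeds in two steps. First, I would show $H^*(Z,Z\setminus N_F)\cong H^*(Z,Z\setminus N)$: deleting the ``far'' parts of the cofaces $G\geqslant F$ that leave the chart $N$ does not affect the relative cohomology, because the cone structure allows one to deformation retract $Z\setminus N$ onto $Z\setminus N_F$ (pushing each coface radially toward the tube and its frontier), so the relevant term of the long exact sequence of the triple $(Z,\,Z\setminus N,\,Z\setminus N_F)$ vanishes. Second, working inside the product $N\cong\relint F\times C^{n-1-k}$ and using excision to localise both groups to this neighbourhood, the two relative groups $H^*(Z,Z\setminus N)$ and $H^*(Z,Z\setminus(U_{x_F}\cap Z))$ are each computed by the transverse pair: contracting the contractible base factor ($\relint F$, resp. $D$) and rescaling the cone ($C^{n-1-k}$, resp. $C_\varepsilon$) identifies both with $H^*(C^{n-1-k},C^{n-1-k}\setminus\{o\})\cong\Hr^{*-1}(C^{n-1-k}\setminus\{o\})$, in agreement with Lemma~\ref{lemLocHomOfC}. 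These identifications are compatible with the comparison map of the first paragraph, which is therefore the claimed canonical isomorphism.

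The hard part will be the first step, namely the passage between the combinatorial open star $N_F$, which is a neighbourhood of the entire cell $\relint F$ and may contain arbitrarily large cofaces, and the small product tube $N$. One must check that the local cohomology of $Z$ is insensitive both to the size of the neighbourhood taken along the stratum and to the portions of cofaces lying outside a fixed chart; this needs the collar structure of $(F,\dd F)$ (available since every face is a homology cell) to make the radial deformation retraction continuous across the frontier of $N_F$. Alternatively, this step can be bypassed by invoking Lemma~\ref{lemPosetSpongeLocalIso}, which identifies the left-hand group canonically with the combinatorial local cohomology $H^*(|S_Z|,|S_Z\setminus(S_Z)_{\geqslant F}|)$: the same transverse computation applied to the point $x_F$ reproduces exactly this module, yielding the canonical isomorphism without an explicit global retraction.
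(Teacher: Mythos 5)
Your geometric reduction to a product neighbourhood has two genuine gaps, and both trace to the same confusion between homological and topological structure. First, you assert $\relint F\cong\Ro^k$ and deduce triviality of the normal cone bundle along $\relint F$; but faces of an acyclic sponge are only \emph{homology} cells --- $F$ is acyclic and $(F,\dd F)$ has the relative cohomology of a cell --- not topological disks. $F$ may have nontrivial fundamental group, so $\relint F$ need not be contractible, the trivialization $N\cong\relint F\times C^{n-2-k}$ fails, and with it the later step ``contracting the contractible base factor''. (Even the local triviality of the transverse cone structure along the stratum is an unproven claim: the sponge axiom gives a chart at each point, not a bundle structure.) Second, the deformation retraction of $Z\setminus N$ onto $Z\setminus N_F$, pushing cofaces radially out of the open star, requires radial coordinates or a collar on the closed star of $F$; your parenthetical that such a collar is ``available since every face is a homology cell'' is false --- being a homology cell is a statement about (co)homology and supplies no collar, no mapping cylinder, no radial structure. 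This missing structure is precisely why the paper never performs any global retraction.

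The paper's proof avoids all of this and is purely homological: choose $U_{x_F}$ with $U_{x_F}\cap Z\subset\bigsqcup_{G\geqslant F}\relint G$ and compare the two spectral sequences of the face filtration $\{Z_p\}$ for the two relative pairs. The $E_1$-terms are, respectively, sums over faces $G\geqslant F$ with $\dim G=p$ of $H^{p+q}(G,\dd G)$ and of $H^{p+q}(G\cap\overline{U}_{x_F},\dd G\cup\dd U_{x_F})$; the first is a homology cell by acyclicity of $G$, the second because $G\cap U_{x_F}$ is a disk by the local model, so both are $\Zo$ concentrated in degree $\dim G$, the comparison is an isomorphism on $E_1$, both sequences degenerate, and the isomorphism of abutments follows. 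Your closing fallback via Lemma~\ref{lemPosetSpongeLocalIso} does not repair the main line either: computing the two sides separately and observing that they are abstractly isomorphic (both concentrated in degree $n-2$, by Lemma~\ref{lemLocHomOfC} on the local side) does not show that the \emph{canonical} comparison map is an isomorphism, and canonicity is the entire point of the lemma --- in the proof of Lemma~\ref{lemHardCore} these isomorphisms must commute with the maps induced by $F>G$ so that the complex \eqref{eqTopRowStrangeCut} is identified with the cosheaf chain complex $C_*(S_Z;\Hh^{k-1})$. A rank count cannot deliver that naturality.
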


\begin{proof}
Since $\bigsqcup_{G\geqslant F}\relint G$ is an open subset of the space $Z$, we can assume that $U_{x_F}\cap Z\subset \bigsqcup_{G\geqslant F}\relint G$. The intersection of each face $G\geqslant F$ with $U_{x_F}$ is a disk, and we have
\[
H^*(G\cap \overline{U}_{x_F},\dd G\cup\dd U_{x_F})\cong H^*(G,\dd G),
\]
since both are isomorphic to $\Zo$ in degree $\dim G$, and vanish otherwise. Therefore two spectral sequences
\[
(E')_1^{p,q}\cong H^{p+q}\left(Z_p,Z_{p-1}\cup (Z\setminus \bigsqcup\nolimits_{G\geqslant F}\relint G)\right)\Rightarrow H^{p+q}\left(Z,Z\setminus \bigsqcup\nolimits_{G\geqslant F}\relint G\right);
\]
\[
(E'')_1^{p,q}\cong H^{p+q}(Z_p,Z_{p-1}\cup (Z\setminus U_{x_F}))\Rightarrow H^{p+q}(Z,Z\setminus U_{x_F});
\]
are isomorphic and degenerate at the second page. This implies the statement.
\end{proof}

\section{A criterion of equivariant formality in complexity one, general case}\label{secSphereImpliesFormality}


\begin{thm}\label{thmSphereImpliesEqForm}
Let the coefficient ring $R$ be either $\Zo$ or $\Qo$. Assume that an effective smooth action of $T^{n-1}$ on a closed orientable manifold $X^{2n}$ satisfies the following properties:
\begin{enumerate}
  \item the action has nonempty finite set $X_0$ of fixed points and each face submanifold $X_F$ meets $X_0$ (condition~\eqref{eqCondJoint});
  \item the action is in general position;
  \item all stabilizers are connected;
  \item the orbit space $Q=X^{2n}/T^{n-1}$ is a homology $(n+1)$-sphere: $\Hr_i(Q)=0$ for all $i\leqslant n$, and $H_{n+1}(Q)\cong R$;
  \item For each face $F$ of $Q_{n-2}$ the identity $\Hr^i(F)=0$ holds for all $i$, and $\Hr^i(Q_{n-2})=0$ holds for all $i\leqslant n-3$.
\end{enumerate}
Then the action is equivariantly formal: $H^{\odd}(X)=0$.
\end{thm}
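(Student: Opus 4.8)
Let me think about how to prove this converse statement.

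We want to prove equivariant formality from geometric/topological conditions on the orbit space. The key tool, as in the $n=2$ case (Theorem \ref{thmSphereImpliesEqFormSimple}), is the ABFP-sequence. By the result of Franz-Puppe, exactness of the (augmented) ABFP-sequence is equivalent to equivariant formality. So the strategy is: show the truncated ABFP-sequence

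$$0\to H^*_T(X_0)\stackrel{\delta_0}{\to} H^{*+1}_T(X_1,X_0)\to\cdots\to H^{*+n-1}_T(X,X_{n-2})\to 0$$

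is exact, which will imply degeneration of the spectral sequence $(E_T)_1^{p,q}\cong H^{p+q}_T(X_p,X_{p-1})\Rightarrow H^{p+q}_T(X)$ at the second page, and hence formality.

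Now, in the $n=2$ case the sequence had only two nontrivial terms, so exactness reduced to surjectivity of a single connecting map, checked by a local collapsing argument (building a local sphere $S^4_x$ and using naturality). For general $n$ the sequence is long, and I expect the proof to run by induction on the dimension/rank, as in the forward direction, but now the implication goes the other way. So let me think about how the hypotheses feed in.

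First I would use hypothesis (2) together with Lemma~\ref{lemJgenForFace} to establish that all proper face submanifolds $X_F$ carry actions of complexity zero. Under complexity-zero, I would like to apply the converse direction inductively to each proper face: the acyclicity of faces (hypothesis (5)) together with the homology-cell structure should force each face submanifold $X_F$ to be equivariantly formal, i.e. $H^{\odd}(X_F)=0$. This is the analogue of Lemma~\ref{lemMasPan} running backwards, and I expect it to come from an inductive application of the very theorem being proved (or its complexity-zero specialization). Once every proper face submanifold is known to be equivariantly formal, the terms $H^{*+i}_T(X_i,X_{i-1})$ of the ABFP-sequence decompose as in~\eqref{eqTech1}, namely $\bigoplus_{\dim F=i} H^i(F,\dd F)\otimes H^*(BT_F)$, so that the truncated sequence at degree $0$ becomes the cochain complex of the homological cell complex $Q_{n-2}$ — and its exactness is governed by the acyclicity of the sponge from hypothesis (5). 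This is precisely the setup in which the homological machinery of Section~\ref{secSpongesHomology} (the cosheaf $\Hh^*$ of local cohomology, the dihomology spectral sequence~\eqref{eqDihomologySequence}, Cohen-Macaulayness from Lemma~\ref{lemSZCohenMacaulay}) is designed to operate.

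The remaining, and hardest, part is the rightmost connecting map $\delta_{n-2}\colon \bigoplus_{\dim F=n-2}H^*(BT_F)\to H^{*+n-1}(Q,Q_{n-2})$, where hypothesis (4) that $Q$ is a homology sphere must enter. Using the isomorphism~\eqref{eqTech2} we identify the target with $H^{*+n-1}(Q,Q_{n-2})$, and the long exact sequence of the pair $(Q,Q_{n-2})$ together with $Q$ being a homology sphere lets us compute this relative cohomology from the (already controlled) cohomology of the sponge $Q_{n-2}$. The main obstacle, I expect, is to verify surjectivity of $\delta_{n-2}$ onto the top class — the analogue of the $H^3(Q,Z)\cong\Zo$ surjectivity in the $n=2$ proof. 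I would handle this by the same local collapsing technique: near a vertex (which exists by hypothesis (1)) collapse the complement of a disk neighborhood to produce a local model equivariantly formal space whose orbit space is a sphere, use that the ABFP-sequence is exact there, and transport surjectivity back via a naturality diagram as in the proof of Theorem~\ref{thmSphereImpliesEqFormSimple}. Identifying the sheaf-theoretic local contributions with the combinatorial incidence structure of the sponge (via Lemmas~\ref{lemPosetSpongeLocalIso} and~\ref{lemSpongeLocalIso}) is what makes this local-to-global transport rigorous, and this is where the bulk of the technical work — the "sheaf of Atiyah-Bredon complexes" alluded to in the introduction — will be concentrated.
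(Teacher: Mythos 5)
Your proposal correctly identifies the target (exactness of the ABFP-sequence plus the Franz--Puppe criterion) and correctly handles degree $0$, where the sequence becomes the cellular cochain complex of the acyclic sponge. But it mislocates the real difficulty. You write that after establishing formality of the face submanifolds, ``the remaining, and hardest, part is the rightmost connecting map $\delta_{n-2}$,'' to be treated by the $n=2$ collapsing trick. For $n\geqslant 3$ this is a genuine gap: one must prove exactness of
\[
0\to \bigoplus_{\dim F=0}H^*(BT_F)\to\cdots\to \bigoplus_{\dim F=n-2}H^*(BT_F)\to H^{*+n-1}(Q,Q_{n-2})\to 0
\]
at \emph{every} intermediate position in every positive internal degree $*$. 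In positive degrees this complex is not a cellular cochain complex with constant coefficients --- the differentials mix the modules $H^*(BT_F)$ for varying stabilizer tori via restriction maps --- so sponge acyclicity alone says nothing about it, and the local collapsing diagram from Theorem~\ref{thmSphereImpliesEqFormSimple} chases a single generator of one map; it does not extend to exactness of a long complex. Your proposal contains no mechanism for these middle positions, which is where the bulk of the paper's Section~\ref{secSphereImpliesFormality} lives. (A secondary point: your inductive step ``face acyclicity forces $H^{\odd}(X_F)=0$'' is not an instance of the theorem being proved, since that theorem is stated for complexity one; it would require the Masuda--Panov converse for locally standard complexity-zero actions. The paper's proof in fact never needs formality of the face submanifolds: in Lemma~\ref{lemStructureOfResolutionSheaf} it uses acyclicity of the faces of $Q$ directly.)

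The paper's actual mechanism, which your sketch gestures at but does not supply, is to resolve the global complex $\ABb^*(X)$ by a cosheaf of \emph{local} ABFP complexes over the face poset $S_Q$: for $x$ in the interior of a face $F$, the Slice Theorem identifies $\AB^*(x)$ with the relative ABFP-sequence of the compactified normal space, a sphere $S\nu_x$; since spheres have vanishing odd cohomology, each local complex is exact in positive degrees \emph{for free} (Lemma~\ref{lemNonObviousVanising}) --- this, not induction on faces, is the source of positive-degree exactness. These local complexes assemble into a double complex $\Ca^{-p,q}=\bigoplus_{\dim F=p}\AB^q(F)$ (plus a corrective kernel term in bidegree $(-(n-1),n-1)$), and one compares its two spectral sequences: $E_{II}$ degenerates on the diagonal by local exactness, while $E_I$ identifies the $d_H$-cohomology of the rows with $\ABb^*(X)$ (Lemma~\ref{lemHardCore}); it is in this last identification --- via the cosheaf $\Hh^*$ of local cohomology, the Cohen--Macaulay property of $S_Z$ (Lemma~\ref{lemSZCohenMacaulay}), the dihomology spectral sequence~\eqref{eqDihomologySequence}, and the isomorphisms of Lemmas~\ref{lemPosetSpongeLocalIso} and~\ref{lemSpongeLocalIso} --- that the face acyclicity and homology-sphere hypotheses actually enter. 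Without this (or an equivalent) local-to-global comparison, your outline does not yield exactness of the ABFP-sequence, hence does not prove the theorem.
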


Conditions (4) and (5) of the theorem state that the sponge $(Q,Q_{n-2})$ of the action is acyclic.


\begin{rem}
We don't have a version of Theorem~\ref{thmSphereImpliesEqForm} for disconnected stabilizers and rational coefficients. So strictly speaking, Theorem~\ref{thmSphereImpliesEqForm} forms a criterion together with Theorem~\ref{thmEqFormImpliesSphere} only in the case of connected stabilizers.
\end{rem}

The case $n=2$ of Theorem~\ref{thmSphereImpliesEqForm} is Theorem~\ref{thmSphereImpliesEqFormSimple} proved in Section~\ref{secSphereImpliesFormalitySimple}. Indeed, for $n=2$, we have: (1)~the condition ``$X_F$ meets $X_0$'' is satisfied since $X_F$ is either a fixed point or the manifold $X$ itself; (2) there are two nonzero weights at each fixed point, so they are in $1$-general position; (3) the condition ``all stabilizers are connected'' is satisfied for semifree circle actions; condition (4) coincides with the condition 3 of Theorem~\ref{thmSphereImpliesEqFormSimple}; condition (5) is trivially satisfied for a circle action with isolated fixed points. 

We now return to Theorem~\ref{thmSphereImpliesEqForm}. By assumption, the action of $T=T^{n-1}$ on $X=X^{2n}$ is in general position, and its sponge is acyclic. Reversing the arguments of Section~\ref{secProofMain}, we see that the ABFP-sequence for $X$ is exact in degrees $\leqslant 0$ (the whole sequence vanishes in degrees $<0$, and the case of degree $0$ follows from the acyclicity of the sponge $(Q,Z)=(Q,Q_{n-2})$). If we prove the acyclicity of ABFP-sequence in positive degrees as well, then equivariant formality of $X$ will follow, according to \cite[Thm.1.1]{FP}.

From now on we assume $n\geqslant 3$. To prove the acyclicity of ABFP-sequence of $X$, we resolve it by a ``cosheaf of local ABFP-sequences''. This line of reasoning requires additional constructions introduced below.

\begin{con}\label{conABandSlice}
Let $\ABb^*(X)$ denote the non-augmented ABFP-sequence of $H^*(BT)$-modules
\begin{equation}
0\to H^*_T(X_0)\stackrel{\delta_0}{\to}
H^{*+1}_T(X_1,X_0)\stackrel{\delta_1}{\to}\cdots
\stackrel{\delta_{n-3}}{\to}H^{*+n-2}_T(X_{n-2},X_{n-3})\stackrel{\delta_{n-2}}{\to}H^{*+n-1}_T(X,X_{n-2})\to 0,
\end{equation}
that is $\ABb^i(X)$ is the graded $H^*(BT)$-module $H^*_T(X_i,X_{i-1})$ with degree shifted by $i$.

Let $x\in X$ be a point and $W_x$ be a small $T$-invariant open neighborhood of the orbit $Tx\subset X$ for which the Slice Theorem applies. This means that there exists an equivariant diffeomorphism
\begin{equation}\label{eqSliceThm}
W_x\cong T\times_{T_x} \nu_x,
\end{equation}
where $\nu_x=\tau_xX/\tau_x(Tx)$ is the normal subspace to the orbit $Tx$ (here and in the following $\tau_pM$ denotes the tangent space to a manifold $M$ at a point $p$). Let $\overline{W}_x$ and $\dd W_x$ be the closure and the boundary of $W_x$ respectively. We consider the relative ABFP-sequence of the pair $(\overline{W}_x,\dd W_x)$ (by the excision property, equivariant cohomology of this pair can be replaced by the corresponding cohomology of the pair $(X,X\setminus W_x)$):
\begin{multline}
0\to H^*_T(X_0,X_0\setminus W_x)\stackrel{\delta_0}{\to}
H^{*+1}_T(X_1,(X_1\setminus W_x)\cup X_0)\stackrel{\delta_1}{\to}\cdots
\\\stackrel{\delta_{n-3}}{\to}H^{*+n-2}_T(X_{n-2},(X_{n-2}\setminus W_x)\cup X_{n-3})\stackrel{\delta_{n-2}}{\to}H^{*+n-1}_T(X,(X\setminus W_x)\cup X_{n-2})\to 0.
\end{multline}
We denote this sequence by $\AB^*(x)$, so that $\AB^i(x)$ is the $H^*(BT)$-module $H^*_T(X_i,(X_i\setminus W_x)\cup X_{i-1})$ with grading shifted by $i$ (note that each $\AB^i(x)$ has its own internal grading). We say that $x\in X$ \emph{has type $k$} if $x\in X_k\setminus X_{k-1}$, or equivalently $\dim Tx=k$.
\end{con}

\begin{con}\label{conabmap}
Notice that for any point $x\in X$ there is a map
\begin{equation}\label{eqAbMapAnyPoint}
\ab_x\colon \AB^\ast(x)\to \ABb^*(X)
\end{equation}
induced by the inclusion of pairs $(X,\varnothing)\hookrightarrow (X,X\setminus W_x)$ (or, equivalently, by collapsing $X\setminus W_x$). The map $\ab_x$ is morphism of differential complexes of graded $H^*(BT)$-modules: in particular, it commutes with the differentials in ABFP-sequences.
\end{con}

\begin{rem}
If $x\in X$ has type $k$ in $X$, then its image in $Q$ has the same type in the sense of sponges as defined in Construction~\ref{conModelSponge}. Details can be found in~\cite{AyzCompl}. Abusing the notation we sometimes denote the image of $x$ in $Q$ with the same letter $x$.
\end{rem}

\begin{lem}\label{lemObviousVanishing}
If $x$ has type $k$, then $\AB^i(x)=0$ for $i<k$.
\end{lem}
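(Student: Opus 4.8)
The plan is to read the vanishing off directly from the local geometry of the slice neighborhood $W_x$, with essentially no cohomological computation: I will show that for $i<k$ the two spaces in the relative pair defining $\AB^i(x)$ literally coincide, so the module is zero for trivial reasons.

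First I would unwind what ``type $k$'' means. Since $\dim Tx=k$, the stabilizer $T_x$ has dimension $\dim T-k=(n-1)-k$. By the Slice Theorem~\eqref{eqSliceThm} we have $W_x\cong T\times_{T_x}\nu_x$, so every orbit meeting $W_x$ has the form $T\cdot[t,v]$ for some $v\in\nu_x$. Because $T$ is abelian, the stabilizer of $[e,v]$ is exactly the isotropy subgroup $(T_x)_v$ of $v$ under the slice representation of $T_x$ on $\nu_x$; in particular it is contained in $T_x$. Consequently the orbit through $[e,v]$ has dimension
\[
\dim T-\dim(T_x)_v\geqslant \dim T-\dim T_x=k .
\]
Thus $W_x$ contains no orbit of dimension strictly less than $k$: moving away from the orbit $Tx$ inside the slice can only shrink the isotropy and hence raise the orbit dimension, never lower it.

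Granting this, the lemma is immediate. For $i<k$ the skeleton $X_i$ consists precisely of orbits of dimension at most $i<k$, so $X_i\cap W_x=\varnothing$ and therefore $X_i\setminus W_x=X_i$. The relative pair in the definition of $\AB^i(x)$ then degenerates to $\bigl(X_i,(X_i\setminus W_x)\cup X_{i-1}\bigr)=(X_i,X_i)$, whence $\AB^i(x)=H^*_T(X_i,X_i)=0$ in every internal degree. The only step that I would state with care, rather than wave at, is the identification of the stabilizers occurring in $W_x$ as subgroups of $T_x$ together with the resulting dimension bound; this is the standard consequence of the Slice Theorem for torus actions and is the real content of the argument. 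Note that no equivariant-formality hypothesis, no property of the sponge, and no acyclicity input is used: the statement is purely local and holds for any smooth torus action.
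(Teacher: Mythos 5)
Your proof is correct and takes essentially the same route as the paper, whose one-line argument is exactly your conclusion: the equivariant $i$-skeleton $X_i$ does not meet $W_x$ for $i<k$, so the pair $(X_i,(X_i\setminus W_x)\cup X_{i-1})$ degenerates to $(X_i,X_i)$. The only difference is that you spell out, via the Slice Theorem and the inclusion of slice stabilizers $(T_x)_v\subseteq T_x$, the justification that the paper leaves implicit in the phrase ``$W_x$ is small enough and does not intersect lower strata.''
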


\begin{proof}
The equivariant $i$-skeleton of $W_x$ is empty for $i<k$ since $W_x$ is small enough and does not intersect lower strata.
\end{proof}

\begin{lem}\label{lemNonObviousVanising}
If $x$ has type $k$, then the differential complex $\AB^*(x)$ is acyclic for $*>k$.
\end{lem}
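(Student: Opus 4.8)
The statement to prove is Lemma~\ref{lemNonObviousVanising}: if $x$ has type $k$, then $\AB^*(x)$ is acyclic in internal degrees $*>k$. The plan is to identify $\AB^*(x)$ as the \emph{local} Atiyah--Bredon sequence attached to the slice representation at $x$, and then to reduce the computation to a product situation using the Slice Theorem~\eqref{eqSliceThm}. By the excision remark in Construction~\ref{conABandSlice}, the relative terms $H^*_T(X_i,(X_i\setminus W_x)\cup X_{i-1})$ only see the equivariant neighborhood $W_x\cong T\times_{T_x}\nu_x$ of the orbit $Tx$. First I would use the standard splitting $H^*_T(T\times_{T_x}\nu_x,-)\cong H^*_{T_x}(\nu_x,-)$, reducing everything to the $T_x$-action on the normal slice $\nu_x$, where $T_x$ is the stabilizer (a torus of dimension $n-1-k$, since $\dim Tx=k$ and all stabilizers are connected by hypothesis of the ambient theorem). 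In this way $\AB^*(x)$ becomes the ABFP-complex of the linear $T_x$-action on $\nu_x$, localized near the fixed point $0\in\nu_x$.

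**The key reduction.** The essential point is that a linear torus representation is equivariantly formal, so its ABFP-sequence is exact; the subtlety is that we need exactness of a \emph{truncated/relative} version, and only in internal degrees $*>k$. The plan is to exploit the product structure coming from the decomposition of the slice into a fixed part and a moving part: since $Tx$ has dimension $k$, the orbit contributes a free $T$-direction which, after the $T\times_{T_x}(-)$ reduction, leaves a representation of $T_x$ whose weights are in general position (this is exactly the content of Lemma~\ref{lemJgenForFace}(3) applied to the face through $x$). I would then argue that $\AB^*(x)$ is, up to a degree shift by $k$, isomorphic to the non-augmented ABFP-complex of this smaller slice representation, and invoke the exactness of the (augmented) ABFP-sequence for the equivariantly formal slice action. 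The augmentation term is precisely $H^*_T(W_x)$, which by the slice reduction is a free $H^*(BT_x)$-module concentrated in a range governed by $k$; exactness of the augmented sequence then translates into acyclicity of the non-augmented $\AB^*(x)$ in all internal degrees strictly above the augmentation degree, which is $k$.

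**Executing the degree bookkeeping.** To pin down the cutoff $*>k$ I would track gradings carefully. After reducing to the slice, the free orbit directions of dimension $k$ contribute a top relative class in internal degree $k$ (this is where the local fundamental class of the $k$-dimensional orbit lives), so the augmented sequence has its augmentation term $H^*_T(W_x)$ supported starting in degree $0$, but the relevant obstruction to acyclicity of the \emph{non-augmented} complex sits exactly in degrees $\leq k$. For $*>k$ the augmentation term vanishes in a way that makes the non-augmented sequence coincide with the full (augmented, exact) ABFP-sequence, forcing acyclicity. Concretely, I expect to combine Lemma~\ref{lemObviousVanishing} (which kills terms $\AB^i(x)$ for $i<k$) with the exactness of the local ABFP-sequence for the slice representation to conclude that the only possible nonvanishing cohomology of $\AB^*(x)$ occurs in internal degree $\leq k$.

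**The main obstacle.** The hard part will be justifying the identification of the relative terms $H^*_T(X_i,(X_i\setminus W_x)\cup X_{i-1})$ with the corresponding terms of the ABFP-sequence of the slice representation, uniformly in $i$, and verifying that the connecting maps $\delta_i$ correspond under this identification. This requires that the equivariant filtration of $X$ restricted to $W_x$ agrees with the orbit-type filtration of the slice $T\times_{T_x}\nu_x$, which holds because $W_x$ is chosen small enough to meet only strata passing through $Tx$. Once this compatibility is in place, the exactness of the slice ABFP-sequence (a consequence of equivariant formality of the linear action, via \cite[Thm.1.1]{FP} together with connectedness of the stabilizers) gives the result. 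I would emphasize that the general-position hypothesis enters through Lemma~\ref{lemJgenForFace} to guarantee that the slice representation itself has the structural properties (in particular the correct complexity and weight configuration) needed for the degree cutoff to land exactly at $k$.
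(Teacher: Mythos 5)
Your overall route is the paper's: reduce to the slice via \eqref{eqSliceThm}, identify $\AB^*(x)$ with a local ABFP complex shifted by $k$ positions (using $\nu_x\cap X_i=(\nu_x)_{i-k}$), and conclude from exactness of the ABFP sequence of an equivariantly formal slice action. But there is a genuine gap at the crucial step. You invoke exactness of the ABFP sequence for ``the linear torus representation'' $\nu_x$, yet the exactness theorems available (Atiyah, Bredon, Franz--Puppe \cite[Thm.1.1]{FP}) apply to closed $T$-manifolds, while $\nu_x$ is noncompact, and what actually appears in $\AB^*(x)$ is the \emph{relative} complex of the pair $(\overline{\nu}_x,\dd\nu_x)$. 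The paper bridges this with a small but essential device you are missing: collapsing $\dd\nu_x$ to a point yields the even-dimensional sphere $S\nu_x\cong S^{2(n-k)}$ with a $T_x$-action; since $H^{\odd}(S\nu_x)=0$, this action is equivariantly formal, so $\ABb^*_{T_x}(S\nu_x)$ is acyclic at positions $>0$, and the relative version $\ABb^*_{T_x}(S\nu_x,\infty)$ differs from it only by the split summand $H^*_{T_x}(\infty)\cong H^*(BT_x)$ at position $0$ (the point $\infty$ is $T_x$-fixed), hence is likewise acyclic at positions $>0$. Your substitute bookkeeping --- an ``augmentation term $H^*_T(W_x)$ \ldots{} concentrated in a range governed by $k$'' and a ``top relative class in internal degree $k$'' coming from the free orbit directions --- does not work as stated: the relative local complex is not the augmented ABFP sequence of $W_x$; the module $H^*_T(W_x)\cong H^*(BT_x)$ starts in internal degree $0$; and the cutoff $k$ is purely positional, not a matter of internal grading (position $i$ of $\AB^*(x)$ corresponds to position $i-k$ of the slice complex, precisely because the equivariant $i$-skeleton of $X$ meets $W_x$ in the $(i-k)$-skeleton of the slice; the internal shift by $k$ is only a convention artifact).

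A second, smaller error: you assert that the general position hypothesis, via Lemma~\ref{lemJgenForFace}(3), is needed ``for the degree cutoff to land exactly at $k$.'' It is not used at all in this lemma: the cutoff comes solely from $\dim Tx=k$, and the equivariant formality of the slice holds for the sphere $S\nu_x$ whatever the weight configuration, since any even-dimensional sphere has vanishing odd cohomology. The only ambient hypothesis that enters is connectedness of the stabilizers (inherited by the $T_x$-action on $\nu_x$), which is what permits integral coefficients in \cite[Thm.1.1]{FP}. Once you add the compactification step and drop the spurious appeal to general position, your argument coincides with the paper's proof.
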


\begin{proof}
The normal vector space $\nu_x$ to the orbit carries the natural action of $T_x$. $\nu_x$ can be identified with an open euclidean disk, for example the slice of $W_x$. Then we denote the corresponding closed disk and the boundary by $\overline{\nu_x}$ and $\dd\nu_x$ respectively. Let $\ABb_{T_x}^*(\overline{\nu_x},\dd\nu_x)$ be the relative ABFP-sequence of $\nu_x$ compactified at infinity (one can instead use ABFP-sequence of $\nu_x$ for cohomology with compact supports, see Remark~\ref{remCompSupp}).

We have $\nu_x\cap X_i=(\nu_x)_{i-k}$, where $(\nu_x)_{i-k}$ is the $T_x$-equivariant $(i-k)$-skeleton of the $T_x$-action on $\nu_x$. Therefore, according to \eqref{eqSliceThm}, we have
\begin{multline}\label{eqShiftedABFP}
\AB^i(x)=H^{*+i}_T(X_i,(X_i\setminus W_x)\cup X_{i-1})\cong
H_T^{*+i}(\overline{W_x}\cap X_i,\dd W_x\cup X_{i-1}) \\ \cong H^{*+i-k}_{T_x}(\overline{\nu_x}\cap X_i,\dd\nu_x\cup X_{i-1})
\cong H_{T_x}^{*+i-k}(\overline{(\nu_x)}_{i-k},\dd\nu_x\cup\overline{(\nu_x)}_{i-k-1})=\ABb_{T_x}^{i-k}(\overline{\nu}_x,\dd\nu_x).
\end{multline}
Here and in the following, we adopt the following convention to simplify the notation. If $A,B$ are subspaces of the same ambient space $E$, then $H_*(A,B)$ denotes $H_*(A,A\cap B)$ (even if $B$ is not the subspace of $A$). Isomorphisms~\eqref{eqShiftedABFP} imply that the graded module $\AB^i(x)$ is isomorphic to the graded module
$\ABb_{T_x}^{i-k}(\overline{\nu_x},\dd\nu_x)$, with degree shifted by $k$.

By collapsing $\dd \nu_x\subset \overline{\nu_x}$ to a point, we get a $2(n-k)$-sphere $S\nu_x\cong S^{2(n-k)}$ with the action of $T_x$. Therefore, $\ABb^*_{T_x}(\overline{\nu_x},\dd\nu_x)\cong \ABb^*_{T_x}(S\nu_x,\infty)$. The sphere has vanishing odd degree cohomology, hence the $T_x$-action on $S\nu_x$ is equivariantly formal. Therefore the ABFP-sequence $\ABb^*_{T_x}(S\nu_x)$ of $H^*(BT_x)$-modules is acyclic for $*>0$. The relative version $\ABb_{T_x}(S\nu_x,\infty)$ differs from $\ABb_{T_x}(S\nu_x)$ by a splitting 1-dimensional summand $H^*_{T_x}(\infty)\cong H^*(BT_x)$ at 0-th position, therefore, $\ABb_{T_x}^*(S\nu_x,\infty)$ is acyclic for $*>0$ as well. This proves acyclicity of $\AB^i(x)$ for $i>k$.
\end{proof}

\begin{con}\label{conSheafAB}
Consider a proper face $F$ of the orbit space $Q=X/T$. For all points $x$ lying over the relative interior of $F$, the complexes $\AB^*(x)$ are canonically isomorphic (here we use the fact that all stabilizers of the action are connected). We use the notation $\AB^*(F)$ for $\AB^*(x_F)$ where $x_F$ is any point lying over the relative interior of $F$. If $F>G$, then there is a natural morphism of differential complexes $\AB^*(F)\to \AB^*(G)$. Indeed, we can choose a point $x_G\in \relint G$, and a nearby point $x_F\in\relint F$ such that $W_{x_F}\subset W_{x_G}$. Therefore, inclusion of pairs $(X,X\setminus W_{x_G})\hookrightarrow (X,X\setminus W_{x_F})$ induces the maps of ABFP-sequences $\ab_{F>G}\colon \AB^*(x_F)\mapsto \AB^*(x_G)$. 
\end{con}

Since the action of $T^{n-1}$ on $X^{2n}$ is in general position, the orbit space $Q$ is a topological manifold, and the subspace $Z=Q_{n-2}$ is a sponge. If $x\in Z$ is a point of type $k$, then there is a neighborhood $U_x$ of $x$ such that the pair $(U_x,U_x\cap Z)$ is homeomorphic to $(\Ro^{n+1},C^{n-k-2}\times \Ro^k)$, where $C^{n-k-2}$ is the sponge local model, defined in Construction~\ref{conModelSponge}.

\begin{lem}\label{lemStructureOfResolutionSheaf}
If $q\leqslant n-2$, then
\begin{equation}\label{eqLocABDescription}
\AB^q(F)\cong \bigoplus_{\substack{\dim G=q,\\G\geqslant F}}H^*(BT_G).
\end{equation}
For $q=n-1$, there is an isomorphism
\begin{equation}\label{eqLocABDescriptionTop}
\AB^{n-1}(F)\cong H^*(\overline{U}_x,Z\cup \dd U_x),
\end{equation}
where the degree is shifted by $n-1$: $\AB^{n-1}(F)_k\cong H^{k+n-1}(\overline{U}_x,Z\cup \dd U_x)$.
\end{lem}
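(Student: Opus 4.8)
The plan is to reduce the local complex $\AB^*(F)$ to the relative ABFP-sequence of the normal representation at a generic point of $F$ — which is equivariantly formal because it compactifies to a sphere — and then to read off its terms from the complexity-one face decomposition already established in Section~\ref{secProofMain}. Throughout, let $x=x_F\in\relint F$ and write $q_F=\dim F$. Since the action is in general position, Lemma~\ref{lemJgenForFace} shows that $X_F$ carries an action of complexity zero, so $\rk F=q_F$ and $x$ has type $q_F$; as all stabilizers are connected and the principal orbit of the effective complexity-zero action of $T/T_F$ on $X_F$ is free, the isotropy group is exactly $T_{x}=T_F$. The slice then splits $T_F$-equivariantly as $\nu_x\cong\Ro^{q_F}\oplus\nu_F$, where $T_F$ acts trivially on the tangential factor $\Ro^{q_F}$ and $\nu_F$ is the fibre of the normal bundle of $X_F$, a complex $T_F$-representation of complex dimension $n-q_F$. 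Its weights are exactly the original tangent weights that do \emph{not} lie in $\ttt_F^{\bot}$, hence they are nonzero on $T_F$ and, by general position, the induced action of $T_F\cong T^{n-1-q_F}$ on $\nu_F\cong\Co^{n-q_F}$ has complexity one, is again in general position, and realizes the sponge local model $C^{n-q_F-2}$ of Construction~\ref{conModelSponge}.

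For the top term I would argue directly. By excision (Construction~\ref{conABandSlice}) we may replace $(X,(X\setminus W_x)\cup X_{n-2})$ by $(\overline{W}_x,\partial W_x\cup(X_{n-2}\cap\overline{W}_x))$. The $T$-action is free on the complement $\overline{W}_x\setminus X_{n-2}$ of the sponge, so passing to orbit spaces gives
\[
\AB^{n-1}(F)_k = H^{k+n-1}_T\bigl(X,(X\setminus W_x)\cup X_{n-2}\bigr)\cong H^{k+n-1}\bigl(\overline{U}_x,Z\cup\partial U_x\bigr),
\]
which is exactly~\eqref{eqLocABDescriptionTop}; the shift by $n-1$ is the grading shift built into the definition of $\AB^{n-1}$.

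For the lower terms $q\leqslant n-2$ I would invoke the slice isomorphism~\eqref{eqShiftedABFP} from the proof of Lemma~\ref{lemNonObviousVanising}, which identifies $\AB^q(F)$ with the $(q-q_F)$-th term of the relative ABFP-sequence $\ABb^{*}_{T_F}(\overline{\nu_x},\partial\nu_x)$. Since $T_F$ acts trivially on the factor $\Ro^{q_F}$, its equivariant skeleta are products $\Ro^{q_F}\times(\nu_F)_\bullet$, so Künneth with $H^*(D^{q_F},S^{q_F-1})$ reduces this to the corresponding term of $\ABb^{*}_{T_F}(\overline{\nu_F},\partial\nu_F)\cong\ABb^{*}_{T_F}(S\nu_F,\infty)$, up to an internal degree shift by $q_F$. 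The sphere $S\nu_F\cong S^{2(n-q_F)}$ is equivariantly formal, and as $\nu_F$ has no trivial summand, $0$ and $\infty$ are its only $T_F$-fixed points. Applying the complexity-one decomposition~\eqref{eqTech1} to $S\nu_F$, whose orbit space is the coned sponge model, the $(q-q_F)$-th term splits over the $(q-q_F)$-dimensional faces of $S\nu_F/T_F$, each a homological cell, yielding $\bigoplus H^*(BT_{G''})$. A local face of dimension $q-q_F$ in $S\nu_F/T_F$ is the trace of a unique face $G\geqslant F$ of $Q$ with $\dim G=q$ (the factor $\Ro^{q_F}$ accounting for the dimension shift), and since $F\leqslant G$ forces $T_G\subseteq T_F$, the noneffective kernel $T_{G''}$ computed in the slice coincides with $T_G$. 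This gives $\AB^q(F)\cong\bigoplus_{\dim G=q,\,G\geqslant F}H^*(BT_G)$, which is~\eqref{eqLocABDescription}.

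The main obstacle is the bookkeeping in the last paragraph: verifying that the normal representation $\nu_F$ is genuinely complexity one in general position with no trivial summand, so that its orbit space is the standard sponge $C^{n-q_F-2}$ whose proper faces are homological cells (so that~\eqref{eqTech1} applies with rank-one relative factors), and then carefully matching the local faces $G''$ and their stabilizers with the global faces $G\geqslant F$ and the subgroups $T_G$, while keeping track of the degree shift by $q_F$ introduced by the trivial slice factor $\Ro^{q_F}$ and the role of the point $\infty$ in the $0$-th term.
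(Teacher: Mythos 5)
Your proposal is correct, and for the main case $q\leqslant n-2$ it takes a genuinely different route from the paper's. The paper never passes to the normal representation: it applies the decomposition~\eqref{eqTech1} of Section~\ref{secProofMain} directly to the pair $(X_q,(X_q\setminus W_x)\cup X_{q-1})$, splitting it over the global faces $G\geqslant F$ with $\dim G=q$ (exactly the faces meeting $U_x$), and for each summand uses freeness of $T/T_G$ away from lower strata to obtain $H^*(G\cap U_x,\dd G\cup \dd U_x)\otimes H^*(BT_G)$; since the closure of $G\cap U_x$ is a disc, the first factor is $\Zo$ concentrated in degree $q$, cancelling the built-in shift. The only inputs are the local product structure of the sponge (quoted from \cite{AyzCompl} just before the lemma) and acyclicity of $G$, the latter used to make the identification $H^q(G\cap U_x,\dd G\cup\dd U_x)\cong H^q(G,\dd G)$ \emph{natural} --- a point that matters downstream in Lemma~\ref{lemHardCore}, where the maps $\ab_{F>G}$ must be matched with cellular differentials, so if you adopt your route you should eventually check compatibility of your isomorphisms with these maps. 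Your slice-theoretic argument --- splitting $\nu_x\cong\Ro^{q_F}\oplus\nu_F$, reducing by relative K\"{u}nneth to $\ABb^*_{T_F}(S\nu_F,\infty)$, and applying~\eqref{eqTech1} to the normal sphere --- is heavier but sound, and the bookkeeping you flag does go through: a restricted weight of $T_F$ on $\nu_F$ vanishing on $\ttt_F$, or a dependency among at most $n-1-q_F$ restricted weights, would combine with the $q_F$ tangent weights spanning $\ttt_F^{\bot}$ to produce a dependency among at most $n-1$ of the original weights, contradicting general position; likewise your two degree shifts by $q_F$ (from the trivial factor $\Ro^{q_F}$ and from the discrepancy between the shifts built into $\AB^q$ and $\ABb^{q-q_F}$) indeed cancel, and for $q=q_F$ the single summand $H^*(BT_F)$ comes from the fixed point $0\in S\nu_F$ after removing the $\infty$-summand. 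What your approach buys is a structural identification of the entire local complex $\AB^*(F)$ with the ABFP-sequence of the normal sphere, which is precisely the picture the paper itself exploits in Lemma~\ref{lemNonObviousVanising} and in the Betti number computation of Section~\ref{secBetti} (the terms $H^{*+\dim F}_{T_F}(S\nu_{x_F},\infty)$ in~\eqref{eqAdditiveEqCohom}); the paper's proof of the present lemma is shorter and needs no weight analysis at all. Your treatment of the top case $q=n-1$ coincides with the paper's.
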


\begin{proof}
If $x$ lies in a relative interior of a face $F$, then the faces of $\overline{U_x}\cap Z$ correspond to the faces $G$ of $Z$ such that $G\geqslant F$. The relative cohomology groups $H^q(G\cap \overline{U_x},\dd G\cup \dd U_x)$ can be naturally identified with $H^q(G,\dd G)$. Indeed, both groups are concentrated in degree $q=\dim G$ where they are isomorphic to the ground ring $R$ (here we use the acyclicity of $G$ for the latter group). This is similar to the proof of Lemma~\ref{lemLocHomOfC}. Then we apply the same argument as in Section~\ref{secProofMain}, and for $x\in\relint F$ we get
\begin{multline}
\AB^q(F)=H^*_T(X_q,(X_q\setminus W_x)\cup X_{q-1})\cong\bigoplus_{\substack{\dim G=q,\\G\geqslant F}} H^*_T(X_G\cap \overline{W_x},(X_G)_{-1}\cup \dd W_x) \\
\cong\bigoplus_{\substack{\dim G=q,\\G\geqslant F}}H^*(G\cap \overline{U_x},\dd G\cup \dd U_x)\otimes H^*(BT_G)\cong \bigoplus_{\substack{\dim G=q,\\G\geqslant F}}H^*(BT_G),
\end{multline}
(the last isomorphism is due to the fact that the closure of $G\cap U_x$ is a disc).

Similarly, we have $\AB^{n-1}(F)_k\cong H^{k+n-1}(\overline{U}_x,Z\cup \dd U_x)$, since the $T$-action is free outside of~$Z$.
\end{proof}

We now look at the complexes $\AB(F)$ together with the maps $\ab_{F>G}$, defined in Construction~\ref{conSheafAB}, as a cosheaf $\AB$ on the poset $S_Q$ of proper faces of $Q$ and consider its chain complex. More precisely, for $0\leqslant p\leqslant n-2$ and $0\leqslant q\leqslant n-1$, consider the $H^*(BT)$-module
\[
\Ca^{-p,q}=\bigoplus_{F\in S_Q,\dim F=p}\AB^q(F).
\]
According to Lemma~\ref{lemObviousVanishing}, $\Ca^{-p,q}$ vanishes for $p>q$. There are two differentials on the double complex $\Ca^{*,*}$. The vertical differential
\[
d_{\ABb}\colon \Ca^{-p,q}\to \Ca^{-p,q+1}
\]
is the direct sum of differentials in ABFP-sequences of faces $F$. The horizontal differential is defined using the maps $\ab_{F>G}$ and the sign convention on $S_Q$:
\[
d_H\colon \Ca^{-p,q}\to \Ca^{-p+1,q},\qquad d_H=\bigoplus_{\substack{\dim F=p,\\\dim G=p-1,\\F>G}}\inc{F}{G}\ab_{F>G}.
\]
The diamond relation~\eqref{eqDiamondRel} implies that $d_H^2=0$.

Also there exists a natural morphism of complexes $d_H\colon C^{0,*}\to\ABb^*(X)$ given by the sum of morphisms $\ab_x$ from~\eqref{eqAbMapAnyPoint} over all fixed points $x$ (faces of rank $0$).

We add one more term $\Ca^{-(n-1),*}$ to the bigraded module $\Ca^{*,*}$, by setting
\begin{equation}\label{eqAdditionalTerm}
\Ca^{-(n-1),n-1}=\Ker(d_H\colon \Ca^{-(n-2),n-1}\to\Ca^{-(n-3),n-1}).
\end{equation}
The differential $d_H\colon \Ca^{-(n-1),n-1}\to \Ca^{-(n-2),n-1}$ is the natural inclusion of the kernel. We also set $C^{-(n-1),q}=0$ for $q<n-1$: this is motivated by Lemma~\ref{lemObviousVanishing}. By the construction of the maps $\ab_{F>G}$, the differentials $d_{\ABb}$ and $d_H$ commute. Hence the double complex with the total differential is defined:
\begin{equation}\label{eqDoubleComplex}
\left(\Ca^k=\bigoplus\nolimits_{q-p=k}\Ca^{-p,q}, d_{\Tot}=d_H+(-1)^pd_{\ABb}\right).
\end{equation}
All terms of this double complex are the graded $H^*(BT)$-modules, and the differentials are homomorphisms of $H^*(BT)$-modules. There are two natural spectral sequences of the double complex. The first sequence $E_I$ runs as follows
\begin{equation}\label{eqE1SpecSeqDefin}
(E_I)^{-p,q}_2=H^q(H^{-p}(\Ca^{*,*};d_H);d_{\ABb})\Rightarrow H^{q-p}(\Ca^*,d_{\Tot}).
\end{equation}

\begin{prop}\label{propHardCore}
The modules $H^{-p}(\Ca^{*,*};d_H)$ vanish for $p\neq 0$. The complex $(H^{0}(\Ca^{*,*}, d_H), d_{\ABb})$ is isomorphic to the complex $(\ABb^*(X),d_{\ABb})$.
\end{prop}

At first notice that there exists a homomorphism from $d_H\colon \Ca^{0,*}\cong \bigoplus_{x\in X^T}\AB^*(x)\to \ABb^*(X)$ given by the sum of $\ab_x$ over all fixed points, see Construction~\ref{conabmap}. The constructed homomorphism $d_H$ is a homomorphism of graded $H^*(BT)$-modules and it commutes with $d_{\ABb}$. Therefore, to prove Proposition~\ref{propHardCore} it is sufficient to prove the acyclicity of the augmented complexes
\begin{equation}\label{eqNewAugmented}
0\to\Ca^{-(n-1),q}\stackrel{d_H}{\to}\cdots\stackrel{d_H}{\to}\Ca^{-1,q}\stackrel{d_H}{\to}\Ca^{0,q}\stackrel{d_H}{\to}\ABb^q(X)\to 0
\end{equation}
for all gradings $q$. It will be convenient to split the proof into two lemmas: the cases $q<n-1$ and $q=n-1$ are considered separately.

\begin{lem}\label{lemHardCore1}
The sequence~\eqref{eqNewAugmented} is acyclic for $q\leqslant n-2$.
\end{lem}

\begin{proof}
If $q\leqslant n-2$, the sequence~\eqref{eqNewAugmented} takes the form
\begin{multline}
0\to\bigoplus_{\substack{F\in S_Q,\\\dim F=n-2}}\AB^q(F)\to\bigoplus_{\substack{F\in S_Q,\\\dim F=n-3}}\AB^q(F)\to\cdots \\\cdots\to
\bigoplus_{\substack{F\in S_Q,\\\dim F=1}}\AB^q(F)\to \bigoplus_{\substack{F\in S_Q,\\\dim F=0}}\AB^q(F)\to \ABb^q(X)\to 0
\end{multline}
According to \eqref{eqLocABDescription} (and~\eqref{eqTech1} in the last term), this writes as
\begin{multline}\label{eqExactSeqOfProperFace}
0\to\bigoplus_{\dim F=n-2}\bigoplus_{\substack{\dim G=q,\\G\geqslant F}}H^*(BT_G)\to\cdots \to\bigoplus_{\dim F=1}\bigoplus_{\substack{\dim G=q,\\G\geqslant F}}H^*(BT_G)\to \\ \to \bigoplus_{\dim F=0}\bigoplus_{\substack{\dim G=q,\\G\geqslant F}}H^*(BT_G)\to \bigoplus_{\dim G=q} (H^*(BT_G)\otimes H^*(G,G_{-1}))_{*+q}\to 0.
\end{multline}
Since $\dim G=q\leqslant n-2$, the assumption of Theorem~\ref{thmSphereImpliesEqForm} implies that $H^*(G,G_{-1})$ is only nontrivial in degree $q$, where it has rank $1$, therefore the last term of~\eqref{eqExactSeqOfProperFace} can be identified with $H^*(BT_G)$.

Changing the summation order in~\eqref{eqExactSeqOfProperFace} we see that it is the direct sum over all faces $G$ of dimension $q$ of the following complexes
\[
0\to \bigoplus_{\substack{\dim F=n-2\\F\leqslant G}}H^*(BT_G)\to\cdots\to \bigoplus_{\substack{\dim F=1\\F\leqslant G}}H^*(BT_G)\to \bigoplus_{\substack{\dim F=0\\F\leqslant G}}H^*(BT_G)\to H^*(BT_G)\to 0.
\]
This complex coincides with the reduced cellular chain complex of the face $G$ with coefficients in $H^*(BT_G)$. Since $G$ is acyclic by assumption, this complex is acyclic.
\end{proof}

\begin{lem}\label{lemHardCore1}
The sequence~\eqref{eqNewAugmented} is acyclic for $q=n-1$.
\end{lem}

\begin{proof}
This case is more complicated. The complex~\eqref{eqNewAugmented} is the complex of graded $H^*(BT)$-modules. According to \eqref{eqLocABDescriptionTop} and \eqref{eqTech2}, its component of internal degree $k$ has the form
\begin{multline}\label{eqTopRowStrange}
0\to(\Ca^{-(n-1),n-1})_k\to \bigoplus_{\dim F=n-2}H^k(\overline{U}_{x_F}, Z\cup \dd U_{x_F})\to\cdots\\ \cdots\to \bigoplus_{\dim F=1}H^k(\overline{U}_{x_F},Z\cup \dd U_{x_F})\to \bigoplus_{\dim F=0}H^k(\overline{U}_{x_F},Z\cup \dd U_{x_F})\to H^{k}(Q,Z)\to 0,
\end{multline}
where $x_F$ is any point in the interior of a face $F\subset Q$, $U_{x_F}$ is a small disk neighborhood of $x_F$, and the rightmost augmentation homomorphism is induced by the inclusion $(Q,Z)\hookrightarrow (Q,Z\cup (Q\setminus U_{x_F}))$. Notice, that all modules appearing in~\eqref{eqTopRowStrange} are $H^*(BT)$-modules and the differentials are homomorphisms of $H^*(BT)$-modules. However, this structure does not carry any important information.

\textbf{Case 1.} Let us first assume $k\leqslant n$, and consider a summand of any term of~\eqref{eqTopRowStrange} except the augmentation term. We have
\begin{multline}\label{eqSeriesOfIso}
H^{k}(\overline{U}_{x_F},Z\cup \dd U_{x_F})\stackrel{(1)}{\cong} H^k(Q,Z\cup (Q\setminus U_{x_F}))\stackrel{(2)}{\cong} H^{k-1}(Z,Z\setminus U_{x_F}) \stackrel{(3)}{\cong} \\
H^{k-1}(Z, Z\setminus \bigsqcup_{G\geqslant F}\relint G)\stackrel{(4)}{\cong} H^{k-1}(|S_Z|,|S_Z\setminus(S_Z)_{\geqslant F}|)\stackrel{(5)}{=} \Hh^{k-1}(F).
\end{multline}

Isomorphism (1) is the excision isomorphism for the set $Q\setminus \overline{U}_{x_F}$. Isomorphism (2) follows from the cohomology long exact sequence of the pair $(Q,Z)$ relative to $Q\setminus U_{x_F}$:
\[
\cdots\to H^{k-1}(Q,Q\setminus U_{x_F})\to H^{k-1}(Z,Z\setminus U_{x_F})\to H^k(Q,Z\cup(Q\setminus U_{x_F}))\to H^k(Q,Q\setminus U_{x_F})\to\cdots
\]
and the fact that $H^*(Q,Q\setminus U_{x_F})\cong H^*(S^{n+1})$ vanishes in degrees $<n+1$. Isomorphism (3) of \eqref{eqSeriesOfIso} is proved in Lemma~\ref{lemSpongeLocalIso}. Isomorphism (4) is proved in Lemma~\ref{lemPosetSpongeLocalIso}. Equality (5) is the definition of the cosheaf $\Hh^*$, see Construction~\ref{conLocalCohomologyCosheaf}.

The natural isomorphisms of \eqref{eqSeriesOfIso} imply that, for $k\leqslant n$, the differential complex
\begin{multline}\label{eqTopRowStrangeCut}
0\to \bigoplus_{\dim F=n-2}H^k(\overline{U}_{x_F}, Z\cup \dd U_{x_F})\to\cdots\\\cdots\to \bigoplus_{\dim F=1}H^k(\overline{U}_{x_F},Z\cup \dd U_{x_F})\to \bigoplus_{\dim F=0}H^k(\overline{U}_{x_F},Z\cup \dd U_{x_F})\to 0,
\end{multline}
(the complex~\eqref{eqTopRowStrange} without left and right augmentations) is isomorphic to $C_*(S_Z,\Hh^{k-1})$, the chain complex of the cosheaf of local cohomology. The poset $S_Z$ is Cohen--Macaulay by Lemma~\ref{lemSZCohenMacaulay}, hence the cosheaf $\Hh^*$ is concentrated in degree $n-2$ (which means that $\Hh^*(F)=0$ for any $F\in S_Z$ and $\ast\neq n-2$). Therefore, the dihomology spectral sequence~\eqref{eqDihomologySequence} collapses at the second page. This fact implies the isomorphism
\begin{equation}\label{eqNewIsom}
H_r(S_Z;\Hh^{n-2})\cong H^{n-2-r}(S_Z).
\end{equation}
Since $S_Z$ is $(n-3)$-acyclic, the module $H^{n-2-r}(S_Z)$ is nontrivial only for $r=n-2$ and $r=0$.

The homology of the complex \eqref{eqTopRowStrangeCut} in the leftmost position are killed by the additional term $C^{-(n-1),n-1}$ defined by~\eqref{eqAdditionalTerm}.

The homology module of \eqref{eqTopRowStrange} at the rightmost position is isomorphic to
\begin{equation}\label{eqTopMiddle}
H_0(S_Z;\Hh^{n-2})\cong H^{n-2}(S_Z)\cong H^{n-2}(Z)\cong H^{n-1}(Q,Z),
\end{equation}
where the last isomorphism follows from the long exact sequence in cohomology of the pair $(Q,Z)$ and the assumption of the theorem, which states that $Q$ is a homology $(n+1)$-sphere.
The fact that the isomorphism of~\ref{eqTopMiddle} is actually induced by the augmentation homomorphism $d_H\colon \Ca^{0,n-1}\to \ABb^{n-1}(X)\cong H^k(Q,Z)$ introduced earlier is explained as follows.

Recall that there exists a homomorphism $\ab_{x_F}\colon H^*(\overline{U}_{x_F},Z\cup \dd U_{x_F})\to H^*(Q,Z)$ induced by collapsing $Q\setminus U_x$, and these homomorphisms commute with the defining maps $\ab_{F>G}$ of the sheaf (since the latter are also induced by collapses). Via the sequence of homeomorphisms~\eqref{eqSeriesOfIso}, the maps $\ab_{x_F}$ provide the morphism $g$ from the cosheaf $\Hh^{n-2}(F)$ to the constant cosheaf on $S_Z$ taking value $H^{n-1}(Q,Z)$. By functoriality of cosheaf homology, we get the induced homomorphism $g_*\colon H_0(S_Z;\Hh^{n-2})\to H_0(S_Z;H^{n-1}(Q,Z))$. The target module $H_0(S_Z;H^{n-1}(Q,Z))$ is naturally isomorphic to $H^{n-1}(Q,Z)$ since $S_Z$ is connected (here we use the assumption $n\geqslant 3$). Now it remains to prove that $g$ fits into the commutative diagram
\begin{equation}\label{eqSheafDiagr}
\xymatrix{
H_0(S_Z,\Hh^{n-2})\ar@{<->}[r]^(.55){\cong} \ar@{->}[rd]^{g_*}& H^{n-2}(|S_Z|) \ar@{->}[r]^{\cong} &H^{n-2}(Z) \ar@{=}[d]\\
& H_0(S_Z,H^{n-1}(Q,Z))\ar@{->}[r]^(.6){\cong} & H^{n-2}(Z)
}
\end{equation}
where the first isomorphism in the top row is given by Zeeman's dihomology as explained above, and the second follows from the fact that $Z$ and $S_Z$ are homologous (see the proof of Lemma~\ref{lemSZCohenMacaulay}). To prove this, we first give an alternative description for the values of the cosheaf $\Hh$. We take a point $x=x_F$ lying in the relative interior of a face $F$, and a small disk neighborhood $U_x$. Let $\ca{S}_x^{n+1}$ denote the $(n+1)$-dimensional sphere obtained by collapsing $Q\setminus U_x$ to a point in the orbit space: $\ca{S}_x=Q/(Q\setminus U_x)\cong \overline{U_x}/\dd U_x$. Let $Z_x$ denote the image of $Z$ under this collapse: $Z_x=Z/(Q\setminus U_x)$. Also set $|S_Z|_x=|S_Z|/|S_Z\setminus(S_Z)_{\geqslant F}|$, this is the combinatorial counterpart of $Z_x$. Then we have the following commutative diagram
\begin{equation}\label{eqSheafValDiagr}
\xymatrix{
\Hr^{n-2}(|S_Z|_x)\ar@{->}[r]^{\cong}\ar@{->}[d]& \Hr^{n-2}(Z_x)\ar@{->}[r]^(.4){\cong}\ar@{->}[d] & H^{n-1}(\ca{S}_x^{n+1}, Z_x) \ar@{->}[d]\\
\Hr^{n-2}(|S_Z|)\ar@{->}[r]^{\cong}& \Hr^{n-2}(Z)\ar@{->}[r]^(.4){\cong} & H^{n-1}(Q, Z).
}
\end{equation}
The first isomorphism in the top row is already explained in~\eqref{eqSeriesOfIso}, and the second isomorphism follows from the long exact sequence of the pair $(\ca{S}_x^{n+1}, Z_x)$ since $\ca{S}_x^{n+1}$ is a sphere. The isomorphisms of the bottom row are explained similarly. The right square in~\eqref{eqSheafValDiagr} is commutative since both vertical maps are induced by collapsing $Q\setminus U_x$. The left square is commutative, since collapsing of $Q\setminus U_x$ and $|S_Z\setminus(S_Z)_{\geqslant F}|$ is compatible with the homology equivalence from $Z$ to $|S_Z|$ constructed in the proof of Lemma~\ref{lemSZCohenMacaulay}.

Commutative diagram~\eqref{eqSheafValDiagr} proves that the morphism $g$ of cosheaves coincides, up to isomorphism, with the natural morphism $\Hh^{n-2}(F)=H^*(|S_Z|,|S_Z\setminus (S_Z)_{\geqslant F}|) \to \Hr^*(|S_Z|)$ from a local to the global cohomology of the poset $S_Z$. The fact that the latter homomorphisms assemble to the Zeeman's isomorphism $H_0(S_Z;\Hh^{n-2})\stackrel{\cong}{\to} H^{n-2}(|S_Z|)$, becomes a tautological consequence of Zeeman's constructions~\cite{Zeem}.

\textbf{Case 2.} Now we study the differential complex~\eqref{eqTopRowStrange} for $k=n+1$. Since $Z$ has dimension $n-2$, we can drop this space from the second position at all relative cohomology groups. Therefore the degree $n+1$ part of~\eqref{eqTopRowStrange} takes the form
\begin{multline}\label{eqTopRowStrangeTop}
0\to(\Ca^{-(n-1),n-1})_{n+1}\to \bigoplus_{\dim F=n-2}H^{n+1}(\overline{U}_{x_F}, \dd U_{x_F})\to\cdots\\\cdots\to \bigoplus_{\dim F=1}H^{n+1}(\overline{U}_{x_F},\dd U_{x_F})\to \bigoplus_{\dim F=0}H^{n+1}(\overline{U}_{x_F},\dd U_{x_F})\to H^{n+1}(Q)\to 0.
\end{multline}
Since $\overline{U}_{x_F}$ is an $(n+1)$-ball, this sequence writes as
\begin{equation}\label{eqTopRowStrangeTopZ}
0\to(\Ca^{-(n-1),n-1})_{n+1}\to \bigoplus_{\dim F=n-2}\Zo\to\cdots\to\bigoplus_{\dim F=1}\Zo\to \bigoplus_{\dim F=0} \Zo\to\Zo\to 0.
\end{equation}
Removing the augmentation from the left, we get
\begin{equation}\label{eqTopRowStrangeTopZcut}
0\to\bigoplus_{\dim F=n-2}\Zo\to\cdots\to \bigoplus_{\dim F=1}\Zo\to \bigoplus_{\dim F=0}\Zo\to \Zo\to 0.
\end{equation}
This is the reduced chain complex of the homology cell complex $Z$, and its homology is concentrated in the leftmost position. The additional term $(\Ca^{-(n-1),n-1})_{n+1}$ of the differential complex~\eqref{eqTopRowStrangeTopZ} kills the homology at the leftmost position, because it was defined by \eqref{eqAdditionalTerm} to do so. Therefore the complex~\eqref{eqTopRowStrangeTop} is acyclic. This completes the proof of the lemma.
\end{proof}

Proposition~\ref{propHardCore} shows that the spectral sequence $E_I$ given by~\eqref{eqE1SpecSeqDefin} degenerates at the second page. We have
\begin{equation}\label{eqE1SpecSecConclusion}
(E_I)_\infty^{0,k}\cong H^k(\ABb^*(X);d_{\ABb})\mbox{ is an associated graded module for }H^k(\Ca^*; d_{\Tot}).
\end{equation}

There exists another cohomological spectral sequence, which computes Atiyah--Bredon cohomology first, then computes vertical cohomology:
\begin{equation}\label{eqE2SpecSeqDefin}
(E_{II})^{-p,q}_2=H^{-p}(H^q(\Ca^{*,*};d_{\ABb});d_H)\Rightarrow H^{q-p}(\Ca^*,d_{\Tot}).
\end{equation}

\begin{lem}\label{lemDiagonalConcentration}
The cohomology $H^q(\Ca^{-p,*};d_{\ABb})$ vanishes for $q\neq p$.
\end{lem}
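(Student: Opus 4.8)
The plan is to reduce the statement to the two local vanishing results already established, Lemma~\ref{lemObviousVanishing} and Lemma~\ref{lemNonObviousVanising}, applied face by face. First I would observe that for $0\leqslant p\leqslant n-2$ the vertical differential $d_{\ABb}$ respects the direct sum decomposition of each column, acting within each summand $\AB^*(F)$. Hence the column $\Ca^{-p,*}$ splits as a direct sum of the local complexes $(\AB^*(F),d_{\ABb})$ over all faces $F$ with $\dim F=p$, and so
\[
H^q(\Ca^{-p,*};d_{\ABb})\cong\bigoplus_{\dim F=p}H^q(\AB^*(F);d_{\ABb}).
\]
It therefore suffices to show that each local cohomology module $H^q(\AB^*(F);d_{\ABb})$ is concentrated in the single degree $q=p$.

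The key bookkeeping point is that a point $x_F$ in the relative interior of a $p$-dimensional face has type $p$. Indeed, since the action is in general position, every proper face submanifold carries a complexity zero action, so $\rk F=\dim F=p$ and the generic orbit over $\relint F$ has dimension $p$. With this identification in hand, Lemma~\ref{lemObviousVanishing} gives $\AB^i(F)=0$ for $i<p$, which kills the vertical cohomology in all degrees $q<p$, while Lemma~\ref{lemNonObviousVanising} asserts that $\AB^*(F)$ is acyclic for $*>p$, killing the cohomology in all degrees $q>p$. Combining the two, $H^q(\AB^*(F);d_{\ABb})=0$ for every $q\neq p$, which is exactly the desired concentration on the diagonal.

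It then remains to dispose of the appended top column $p=n-1$ separately. By the construction~\eqref{eqAdditionalTerm}, the module $\Ca^{-(n-1),n-1}$ is the only nonzero entry in this column, and it carries neither an incoming nor an outgoing vertical differential, since $\Ca^{-(n-1),q}=0$ for $q\neq n-1$. Consequently $H^q(\Ca^{-(n-1),*};d_{\ABb})$ equals $\Ca^{-(n-1),n-1}$ in degree $q=n-1=p$ and vanishes otherwise, again matching the claim. I do not anticipate a serious obstacle here: the substantive content is entirely carried by the two preceding local lemmas, and the only steps requiring care are the matching of the homological degree $p$ of a face with the orbit type $p$ of its interior points, and the trivial but genuinely separate treatment of the appended term at $p=n-1$.
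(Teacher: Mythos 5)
Your proof is correct and follows essentially the same route as the paper's: decompose each column $\Ca^{-p,*}$ for $p\leqslant n-2$ into the local complexes $\AB^*(F)$, invoke Lemmas~\ref{lemObviousVanishing} and~\ref{lemNonObviousVanising} to concentrate the cohomology in degree $\dim F=p$, and note that the appended term $\Ca^{-(n-1),n-1}$ is concentrated in degree $n-1$ by construction. Your explicit check that interior points of a $p$-dimensional face have type $p$ (via the complexity-zero property of proper faces) is a detail the paper leaves implicit, but it is the same argument.
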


\begin{proof}
For $0\leqslant p\leqslant n-2$ we have $\Ca^{-p,q}=\bigoplus_{\dim F=p}\AB^q(F)$ by definition. The cohomology of $(\AB^q(F),d_{\ABb})$ vanishes for $q\neq\dim F$ by Lemmas~\ref{lemObviousVanishing} and \ref{lemNonObviousVanising}. If $p=n-1$, the additional term $C^{-(n-1),q}$ is concentrated in degree $q=n-1$ by construction.
\end{proof}

Lemma~\ref{lemDiagonalConcentration} shows that the spectral sequence $E_{II}$ given by~\eqref{eqE2SpecSeqDefin} satisfies
\[
(E_{II})^{-p,q}_1=0\mbox{ for } p\neq q.
\]
Therefore, $E_{II}$ degenerates at the first page, and we have
\begin{equation}\label{eqE2SpecSecConclusion}
H^k(\Ca^*; d_{\Tot})=0\mbox{ for } k\neq 0.
\end{equation}

\begin{proof}[Proof of Theorem~\ref{thmSphereImpliesEqForm}]
Combining \eqref{eqE1SpecSecConclusion} with \eqref{eqE2SpecSecConclusion}, we see that ABFP-sequence $\ABb^*(X)$ of the space $X$ is acyclic in degrees $\neq0$. The differential complex $\ABb^*(X)$ is the first page of the spectral sequence
\[
(E_T)^{p,q}_1\cong H^{p+q}_T(X_p,X_{p-1})\Rightarrow H^{p+q}_T(X).
\]
The acyclicity of $\ABb^*(X)$ implies that $H^0(\ABb^*(X),d_{\ABb})\cong H^*_T(X)$. Therefore, the augmented ABFP-sequence
\begin{multline}
0\to H^*_T(X)\stackrel{i^*}{\to} H^*_T(X_0)\to
H^{*+1}_T(X_1,X_0)\to\cdots\\\cdots
\to H^{*+n-2}_T(X_{n-2},X_{n-3})\to H^{*+n-1}_T(X,X_{n-2})\to 0,
\end{multline}
is exact. According to \cite[Thm.1.1]{FP}, this condition implies that $X$ is equivariantly formal. This completes the proof of Theorem~\ref{thmSphereImpliesEqForm}.
\end{proof}

\section{Betti numbers in case of complexity one}\label{secBetti}

\begin{defin}\label{definFvector}
Consider a sponge $Z=Z^{n-2}$. Let $f_i=f_i(Z)$ be the number of $i$-dimensional faces of $Z$, for $i=0,1,\ldots,n-2$, and $b=b(Z)$ denotes the Betti number $b=\rk \Hr_{n-2}(Z)$ (which is the only nonzero reduced Betti number of $Z$, according to the acyclicity condition). The pair $((f_0,\ldots,f_{n-2}),b)$ will be called \emph{the extended f-vector} of the sponge $Z$.
\end{defin}

\begin{rem}
If $Z$ is an acyclic sponge, then we have $f_0-f_1+\cdots+(-1)^{n-2}f_{n-2}=1+(-1)^{n-2}b$, since both numbers are equal to the Euler characteristic of $Z$. Therefore, for acyclic sponges, the $b$-number is expressed in terms of f-numbers:
\[
b=f_{n-2}-f_{n-1}+\cdots+(-1)^{n-2}f_0+(-1)^{n-1}f_{-1},
\]
where we set $f_{-1}=1$ by definition.
\end{rem}

In the following $\Hilb(A^*,t)=\sum_{i=-\infty}^{+\infty}(\rk A_i) t^i$ denotes the Hilbert--Poincare series of a $\Zo$-graded $R$-module.

\begin{prop}
The Betti numbers of an orientable manifold $X$ with equivariantly formal action of complexity one in general position can be expressed from the extended $f$-vector of its sponge by the formula
\begin{equation}\label{eqBettiSimplerFormula}
\Hilb(H^*(X);t)=\sum_{i=0}^{n-2}(-1)^if_i(1-t^2)^{i}+(b+t^2)(t^2-1)^{n-1}.
\end{equation}
\end{prop}

\begin{proof}
Since $X$ is equivariantly formal, its ABFP-sequence is exact. Therefore, taking Euler characteristic of the ABFP-sequence in each degree, we get
\begin{equation}\label{eqEquivEulChar}
\Hilb(H^*_T(X);t)=\sum_{i=0}^{n-1}(-1)^i\Hilb(\ABb^i(X);t)=\sum_{i=0}^{n-2}(-1)^i\dfrac{f_i}{(1-t^2)^{n-1-i}}+(-1)^{n-1}(b+t^2),
\end{equation}
where the last identity is due to relations \eqref{eqTech1} and \eqref{eqTech2}.

Equivariant formality of $X$ implies that $H^*_T(X)$ is a free $H^*(BT)$-module, and $H^*(X)\cong H^*_T(X)\otimes_{H^*(BT)}R$. For the Hilbert--Poincare series this implies
\begin{equation}
\Hilb(H^*(X);t)=\dfrac{\Hilb(H^*_T(X);t)}{\Hilb(H^*(BT);t)}=\Hilb(H^*_T(X);t)\cdot(1-t^2)^{n-1}.
\end{equation}

Multiplying both sides of~\eqref{eqEquivEulChar} by $(1-t^2)^{n-1}$ we get the required formula for the ordinary cohomology.
\end{proof}

\begin{rem}
The homological arguments of Section~\ref{secSphereImpliesFormality} can be indirectly checked by examining Hilbert--Poincare series of all modules and sheaves appearing in the proof. As mentioned in the last paragraph of Section~\ref{secSphereImpliesFormality}, $H^*_T(X)\cong H^0(\ABb^*(X))$. Isomorphisms~\ref{eqE1SpecSecConclusion} and \ref{eqE2SpecSecConclusion} imply that $(E_{II})_\infty^{*,*}=\bigoplus_{p=0}^{n-1}(E_{II})^{-p,p}_1$ is an associated module for $H^*_T(X)$. Hilbert--Poincare series of all terms of $(E_{II})_\infty^{*,*}$ can be expressed via the extended $f$-vector, which in the end leads to the formula
\begin{equation}\label{eqHilbEquiv}
\Hilb(H^*_T(X);t)=\sum_{F\in S_Z}\dfrac{t^{2n-2\dim F}}{(1-t^2)^{n-1-\dim F}}+(1+bt^2)=\sum_{i=0}^{n-2}\dfrac{f_it^{2n-2i}}{(1-t^2)^{n-1-i}}+(1+bt^2),
\end{equation}
and, consecutively,
\begin{equation}\label{eqBettiHarderFormula}
\Hilb(H^*(X);t)=\sum_{i=0}^{n-2}f_it^{2n-2i}(1-t^2)^{i}+(1+bt^2)(1-t^2)^{n-1}.
\end{equation}
One can notice that~\eqref{eqBettiHarderFormula} differs from~\eqref{eqBettiSimplerFormula}. However, one formula transforms into another by applying Poincare duality on $X$.
\end{rem}

\begin{ex}
Let $M^{2n}$ be a manifold with an equivariantly formal $T^n$-action. According to Lemma~\ref{lemMasPan}, the orbit space $P=M/T^n$ is a face acyclic manifold with corners, and it has the $f$-vector $(f_0,f_1,\ldots,f_{n-1},f_n=1)$, where $f_i$ is the number of $i$-dimensional faces of $P$. As was mentioned in Example~\ref{exExamplesSponges}, whenever we have an induced action of a subtorus $T^{n-1}\subset T^n$ on $M^{2n}$ which is in general position, the sponge $Z$ of this action is the $(n-2)$-skeleton of $P$. The extended $f$-vector of this sponge is equal to $((f_0,f_1,\ldots,f_{n-2}),b)$. Using Euler characteristic, it is easily seen that $b=f_{n-1}-1$. In this case, formula~\eqref{eqBettiHarderFormula} coincides with the definition of $h$-numbers of the polytope $P$ (see Remark~\ref{remHnumbersOfPoly} below).
\end{ex}

\begin{ex}
For the action of $T^3$ on the Grassmann manifold $G_{4,2}$, the sponge consists of all proper faces and three equatorial squares of an octahedron (see the top left part of Fig.\ref{figSpongeExamples} in Section~\ref{secSpongesHomology}). Its extended $f$-vector is $((6,12,11),4)$. We have
\[
\Hilb(H^*(G_{4,2});t)=6+12(t^2-1)+11(t^2-1)^2+(4+t^2)(t^2-1)^3=1+t^2+2t^4+t^6+t^8.
\]
This coincides with the well known answer for Betti numbers of the Grassmann manifold.
\end{ex}

\begin{ex}
For any action of $T^2$ on a $6$-dimensional manifold in general position, the sponge coincides with the GKM-graph of the action. In particular, for the action of $T^2$ on the full flag manifold $F_3$, the sponge is the complete bipartite graph $K_{3,3}$, see the top right part of Fig.\ref{figSpongeExamples}. The extended $f$-vector is $((6,9),4)$, and we obtain
\[
\Hilb(H^*(F_3);t)=6+9t(t^2-1)+(4+t^2)(t^2-1)^2=1+2t^2+2t^4+t^6.
\]
\end{ex}

\begin{ex}
The sponge of the $T^3$-action on the quaternionic projective plane $\HP^2$ was described in detail in \cite{AyzHP}, see the bottom of Fig.\ref{figSpongeExamples} (triangles correspond to torus-invariant manifolds isomorphic to $\CP^2$ and biangles correspond to torus invariant manifolds isomorphic to $\HP^1$, they are glued together along edges as marked on the figure). The extended f-vector of this sponge is equal to $((3,6,7),3)$, and we obtain
\[
\Hilb(H^*(\HP^2);t)=3+6(t^2-1)+7(t^2-1)^2+(3+t^2)(t^2-1)^3=1+t^4+t^8.
\]
\end{ex}

We finish with the following natural definition and the question.

\begin{defin}
Let $Z=Z^{n-2}$ be an acyclic sponge, and $((f_0,\ldots,f_{n-2}),b)$ be the extended $f$-vector of $Z$. The coefficients $(h_0,h_1,\ldots,h_n)$ of the polynomial
\[
\sum_{i=0}^{n-2}f_it^{2n-2i}(1-t^2)^{i}+(1+bt^2)(1-t^2)^{n-1}=h_0+h_1t^2+\cdots+h_nt^{2n}
\]
are called \emph{the $h$-numbers} of $Z$.
\end{defin}

Equivalently, $\sum_{i=0}^{n-2}(-1)^if_i(1-t^2)^{i}+(-1)^{n-1}(b+t^2)(1-t^2)^{n-1}=h_n+h_{n-1}t^2+\cdots+h_0t^{2n}$.

\begin{probl}[Dehn--Sommerville relations]\label{problDehnSomm}
Is it true that $h_i=h_{n-i}$ for any acyclic sponge $Z$?
\end{probl}

\begin{probl}[Nonnegativity]\label{problLowerBound}
Is it true that $h_i\geqslant 0$ for any acyclic sponge $Z$?
\end{probl}

Certainly, if $Z$ is a sponge of some equivariantly formal torus action of complexity one in general position, both questions answer in positive, as follows from nonnegativity of Betti numbers and Poincare duality on the corresponding $T^{n-1}$-manifold.

\begin{rem}\label{remHnumbersOfPoly}
The $h$-vector $(h_0,\ldots,h_n)$ of a simple $n$-dimensional polytope $P$ is defined by the formula
\[
\sum_{i=0}^{n}h_it^{2i}=\sum_{i=0}^{n}f_i(1-t^2)^it^{2n-2i},
\]
where $f_i$ is the number of $i$-dimensional faces of $P$, see e.g.~\cite[Ch.1.3]{BPnew}. Similarly, we can define the $h$-vector of an $n$-dimensional manifold with corners $P$, if all faces of $P$ (including $P$ itself) are acyclic. If $M^{2n}$ is an equivariantly formal manifold with the complexity zero action of a torus $T^n$ such that $M^{2n}/T^n\cong P$, then $\rk H^{2j}(M^{2n})=h_j$ (see \cite{DJ} for simple polytopes and \cite{MasPan} for face acyclic manifolds with corners). Hence, when $P$ is the orbit space of some action the nonnegativity $h_j\geqslant 0$ obviously follows, and Dehn--Sommerville relations $h_j=h_{n-j}$ are a consequence of Poincare duality. However, Dehn--Sommerville relations and the nonnegativity hold for any face-acyclic manifold with corners as follows from the Gorenstein property of the face ring $\ko[S_P]$ corresponding to the simplicial poset $S_P$ dual to $P$.

We suppose that there should exist a theory of ``sponge algebras'' which is parallel to the theory of face rings for simple polytopes. This theory, if exists, should answer Problems~\ref{problDehnSomm} and \ref{problLowerBound}, and give a description of equivariant cohomology rings for equivariantly formal actions of complexity one in general position.
\end{rem}

\textbf{Acknowledgements.} The authors thank the anonymous referees for the numerous valuable comments on the first and second versions of the paper, especially for paying our attention that orientability assumption is required in all statements of the paper and that the reference to Poincar\'{e} conjecture in dimension 4 was missing. We also appreciate the advice of the referee to rewrite some arguments in the language of homology with closed supports.

\textbf{Funding.} The article was prepared within the framework of the HSE University Basic Research Program.

\textbf{Conflict of interest.} The authors declare that they have no conflict of interest.

\end{document}